\newenvironment {proof}{{\noindent\bf Proof.}}{\hfill $\Box$ \medskip}
\newtheorem{theorem}{Theorem}[section]
\newtheorem{lemma}[theorem]{Lemma}
\newtheorem{proposition}[theorem]{Proposition}
\newtheorem{remark}[theorem]{Remark}
\renewcommand {\theequation}{\arabic{section}.\arabic{equation}}
\def \hat{\widehat}
\def \tilde{\widetilde}
\def \bar{\overline}
\begin{document}

\title{Particle representation for the solution of the filtering problem.
Application to the error expansion of filtering discretizations\thanks{Paper submitted to the Festschrift in honor of Hiroshi Kunita as part
of a memorial volume in the Journal of Stochastic Analysis.}}
\author{D. Crisan\thanks{Department of Mathematics, Imperial College London, Huxley's Building,180
Queen's Gate, London SW7 2AZ, United Kingdom, E-mail: dcrisan@imperial.ac.uk }, T. G. Kurtz\thanks{Departments of Mathematics and Statistics, University of Wisconsin-
Madison, 48 Lincoln Drive, Madison WI 53706-1388, USA, E-mail: kurtz@math.wisc.edu } and S. Ortiz-Latorre\thanks{Department of Mathematics, University of Oslo, P.O. Box 1053 Blindern,
N-0316 Oslo, Norway, E-mail: salvadoo@math.uio.no}}
\date{\today}
\maketitle
\begin{abstract}
We introduce a weighted particle representation for the solution of
the filtering problem based on a suitably chosen variation of the
classical de Finetti theorem. This representation has important theoretical
and numerical applications. In this paper, we explore some of its
theoretical consequences. The first is to deduce the equations satisfied
by the solution of the filtering problem in three different frameworks:
the signal independent Brownian measurement noise model, the spatial
observations with additive white noise model and the cluster detection
model in spatial point processes. Secondly we use the representation
to show that a suitably chosen filtering discretisation converges
to the filtering solution. Thirdly we study the leading error coefficient
for the discretisation. We show that it satisfies a stochastic partial
differential equation by exploiting the weighted particle representation
for both the approximation and the limiting filtering solution.

\textbf{MSC 2010}: 60G35, 60F05, 60F25, 60H35, 60H07, 93E11.

\textbf{Key words}: Non-linear filtering, Kallianpur-Striebel's formula,
first order discretization, particle representation.
\end{abstract}

\section{Introduction}

\label{sectintro} Many phenomena of interest are not completely observable,
so it is natural to look for ways of estimating what is not observable
about a phenomenon in terms of what is. A natural approach to this
problem is to create a mathematical model for the phenomenon that
relates what is observable to what is not. Then the model can be used
to constrain or estimate the possibilities for the unobserved quantities
in terms of the observed quantities. If the mathematical model is
stochastic, then a natural way of formulating the solution of this
problem is to compute the conditional distribution of what is not
known given what is known. If the unknown and known quantities are
evolving in time, the problem of computing these conditional distributions
as functions of time is referred to as {\em stochastic filtering}.

Stochastic filtering has a illustrious history that can be traced
back to the work of Kolmogorov, Krein and Wiener from the 1940's\footnote{The interested reader can consult \citep{bc2009} and \citep{cr2011}
for historical accounts of the subject.}. Over the last fifty years, Professor Kunita has made fundamental
contributions to the study of the stochastic filtering problem. A
representative selection of his work on the subject is incorporated
in \citep{FKK72,k2011,k1971,Kun79,k1981,k1982,K1991,KH1991}, and
covers : the stochastic partial differential equations connected with
stochastic filtering for the classical model considered here in Section
\ref{mod1}, the stochastic filtering problem for models in which
the signal is a general semi-martingale, the ergodic properties of
nonlinear filtering processes, the associated stability and approximation
problems in nonlinear filtering theory, the nonlinear filtering Cauchy
problem, the asymptotic behavior of the nonlinear filtering errors
of {M}arkov processes, the analysis of the innovation process, the
long time behavior of the solution of the filtering equations, etc.

The work \citep{FKK72} deserves special consideration. It contains
a self-contained, fully rigorous derivation of filtering equations.
It is based on an approach that requires the innovation process first
considered by Kailath only a year earlier. It also uses the existence
of a reference probability measure obtained from the original one
by means of a transformation due to Girsanov that was, at the time,
barely a decade old. It used a (by now classical) representation of
square integrable martingales (appearing in Kunita's earlier work
with Watanabe) to remove the assumption of independence between the
measurement noise and the signal. The treatment in \citep{FKK72}
of the filtering problem was much cleaner than existing contemporary
works and allowed, among other things, for the treatment of controlled
system processes.

The current work offers an alternative to Kunita's treatment of the
filtering equations. In common with \citep{FKK72}, we still make
use of the reference probability measure (as described below). However,
the main tool for the derivation of filtering equations is a certain
weighted particle representation for the solution of the filtering
problem. Let us describe next the intuition behind this representation:

The simplest version of the filtering problem is one in which the
model consists of two random variables, say $X$ and $Y$, where $Y$
is known to an observer and $X$ is not. Assume the random variables
are defined on a sample space $(\Omega,{\cal F},P)$, with $X$ taking
values in a space ${\cal U}$ and $Y$ taking values in a space ${\cal O}$,
which we will always take to be complete, separable metric spaces.
In all the examples we consider, these will be function spaces. Typically,
we characterize the conditional distribution in terms of the conditional
expectations $\mathbb{E}^{P}\left[f(X)|Y\right]$ for a sufficiently
large class of functions $f$.

Central to our analysis is the notion of a {\em reference probability
measure}. If $P<<Q$ with $dP=LdQ$, Bayes formula says 
\[
\mathbb{E}^{P}\left[f(X)|Y\right]=\frac{\mathbb{E}^{Q}\left[f(X)L|Y\right]}{\mathbb{E}^{Q}\left[L|Y\right]}.
\]
If $X=h(U,Y)$, $L=L(U,Y)$, and $U$ and $Y$ are independent under
$Q$, then 
\[
\mathbb{E}^{P}\left[f(X)|Y\right]=\frac{\int f(h(u,Y))L(u,Y)\mu_{U}(du)}{\int L(u,Y)\mu_{U}(du)},
\]
where $\mu_{U}$ is the distribution of $U$. If we can find such
a $Q$, that will be our reference probability measure.

These comments suggest a method for simplifying the calculation of
conditional distributions: Find a reference probability measure under
which what we don't know is independent of what we do know. Then,
as in \citep{HM69}, let $U_{1},U_{2},\ldots$ be iid with distribution
$\mu_{U}$, and we have 
\[
\mathbb{E}^{P}\left[f(X)|Y\right]=\lim_{N\rightarrow\infty}\frac{\sum_{k=1}^{N}f(h(U_{k},Y))L(U_{k},Y)}{\sum_{k=1}^{N}L(U_{k},Y)}.
\]
Note that $(U_{1},Y),(U_{2},Y),\ldots$ is an exchangeable sequence
with tail $\sigma$-algebra ${\cal T}=\sigma(Y)$ (see Corollary 7.25
of \citep{Kal02}), and de Finetti's theorem gives 
\begin{eqnarray*}
\lim_{N\rightarrow\infty}\frac{1}{N}\sum_{k=1}^{N}f(h(U_{k},Y))L(U_{k},Y) & = & \mathbb{E}^{Q}\left[f(U_{1},Y)L(U_{1},Y)|{\cal T}\right]\\
 & = & \mathbb{E}^{Q}\left[f(U_{1},Y)L(U_{1},Y)|Y\right]\\
 & = & \int f(h(u,Y))L(u,Y)\mu_{U}(dx)\qquad a.s.\;Q.
\end{eqnarray*}

In the context of stochastic processes and the Kallianpur-Striebel
formula (\citep{KS68}), this limit suggests a natural approach to
the derivation and representation of filtering equations. We introduce
a weighted particle representation for the solution of the filtering
problem based on a suitably chosen variation of the classical de Finetti
theorem. This representation has important theoretical and numerical
applications. In this paper, we explore some of its theoretical consequences.

The first is to deduce the equations satisfied by the solution of
the filtering problem in three different frameworks: the signal independent
Brownian measurement noise model, the spatial observations with additive
white noise model, and the cluster detection model in spatial point
processes. We cover this topic in Section \ref{sectdrv}.

Second, we use the representation to show that a suitably chosen filtering
discretization converges to the filtering solution. We cover this
topic in Section \ref{subsec: diffusion in GWN}. This discretization
is one of three procedures required to develop any numerical method
approximating the solution of the filtering problem. See, for example,
Chapters 8, 9 and 10 in \citep{bc2009} for concrete examples of numerical
schemes for solving the filtering problem.

Third, we study the leading error coefficient for the discretization
introduced in Section \ref{subsec: diffusion in GWN}. In Section
\ref{secterr}, we show that it satisfies a stochastic partial differential
equation by exploiting the weighted particle representation for both
the approximation and the limiting filtering solution. Based on these
representations, an extension of the classical Richardson extrapolation
result can also be obtained. This is the subject of a subsequent work.

Particle representations are flexible tools that can be used for many
other stochastic dynamical systems. In this paper, particle representations
are used to characterize the solution of the filtering problem by
deducing the corresponding filtering equation, and it is also used
to show the convergence of a certain discretization of the filtering
solution. However, particle representations have many other applications.
In \citep{KX99}, they are used to prove uniqueness for a class of
stochastic partial differential equations that includes filtering
equations. In \citep{CKL2014}, particle representations are used
to study the solution of a nonlinear stochastic partial differential
equation. In particular, the authors show, under mild nondegeneracy
conditions on the coefficients, that the solution charges every open
set and, under slightly stronger conditions, that the solution is
absolutely continuous with respect to Lebesgue measure with strictly
positive density. Such results would be very hard to obtain (under
the same general assumptions) by other methods such as PDE methods
(Sobolev embedding theorems) or Malliavin calculus. Separately, in
\citep{CJK2018}, a similar particle representation is used to study
a class of semilinear stochastic partial differential equations with
Dirichlet boundary conditions that includes the stochastic Allen-Cahn
equation and the $\Phi_{d}^{4}$ equation of Euclidean quantum field
theory. Particle representations arise naturally in the study of McKean-Vlasov
type models, for example, \citep{KP96,KK10,CKL2014} where the representations
are used to prove limit theorems.

We should emphasize that what we are deriving here are {\em particle
representations\/} of the filter rather than {\em particle approximations}.\footnote{The type of weighted particle representations considered here were
mentioned briefly in \citep{KP96}} There is a massive area of research regarding {\em particle approximations}
of the distributions of evolving dynamical system, which we shall
not discuss here.

\section{Derivation of filtering equations}

\label{sectdrv} In the Introduction, we applied de Finetti's theorem
to derive a representation of a conditional expectation in terms of
what we called a reference probability measure. In this section, we
use this argument to derive stochastic equations giving the solution
of the filtering problem in three different settings. The first of
these is the familiar observation of a diffusion in Gaussian white
noise. The second is similar, but includes a noise process that is
common to both the signal and the observation. In addition, the observation
process is infinite dimensional. In the third example, the signal
and observations are given by spatial point processes.

To avoid certain technicalities, we assume that all $\sigma$-algebras
are complete and all filtrations are complete and right continuous.

\subsection{Observation of a diffusion in Gaussian white noise}

\label{mod1}

\subsubsection{The model}

The signal is given by an Itô equation in $\mathbb{R}^{d_{X}}$, 
\begin{equation}
X(t)=X(0)+\int_{0}^{t}\sigma(X(s))dB(s)+\int_{0}^{t}b(X(s))ds,\label{sig1}
\end{equation}
for $d_{B}$-dimensional standard Brownian motion $B$, continuous
$d_{X}\times d_{B}$ matrix-valued $\sigma$, and continuous $\mathbb{R}^{d_{X}}$-valued
$b$, and the observation by 
\begin{equation}
Y(t)=\int_{0}^{t}h(X(s))ds+W(t),\label{obs1}
\end{equation}
where $h:\mathbb{R}^{d_{X}}\rightarrow\mathbb{R}^{d_{Y}}$ is measurable
and $W$ is a $\mathbb{R}^{d_{Y}}$-valued standard Brownian motion
that is independent of $B$. What is known to the observer is $Y$
and what is not known is $X$, or assuming uniqueness for (\ref{sig1}),
$B$. We assume that $X$ does not explode, as would be the case if
$\sigma$ and $b$ have at most linear growth, that is 
\begin{equation}
\left|\sigma(x)\right|+\left|b(x)\right|\leq K_{1}+K_{2}\left|x\right|.\label{lingr}
\end{equation}
For simplicity, we assume $\mathbb{E}[\left|X(0)\right|^{m}]<\infty$
for all $m>0$, and note that under the linear growth assumption,
an exercise with Itô's formula shows that for each $m\geq2$, implies
\begin{equation}
\mathbb{E}[\sup_{0\leq s\leq t}\left|X(s)\right|^{m}]<D_{1}^{m}e^{D_{2}^{m}t},\quad\forall t>0,\label{momest1}
\end{equation}
for appropriate constants $D_{1},D_{2}$, see Proposition 7.2 in \citep{Pa18}.

\subsubsection{The reference probability space}

We take $(\Omega,{\cal F},Q)$ to be a probability space on which
are defined independent Brownian motions $B$ and $Y$, both independent
of $X(0)$, with the same dimensions as $B$ and $Y$ above, that
is, under $Q$, what is known is independent of what is not known.
We note that many presentations of filtering problems begin with $(\Omega,{\cal F},P)$
and obtain $Q$ by change of measure from $P$. That approach requires
$P$ and $Q$ to be equivalent in the sense that $P<<Q$ and $Q<<P$.
In many settings, constructing the model starting with $Q$ is more
straightforward and does not require $Q<<P$.

Assume that $B$ and $W$ are $\{{\cal F}_{t}\}$-Brownian motions
for a filtration $\{{\cal F}_{t}\}$ (which, as noted above, we assume
is complete and right continuous) and that the signal $X$ is defined
on $(\Omega,{\cal F},Q)$ as the solution of (\ref{sig1}). Let 
\[
L(t)=\exp\left\{ \int_{0}^{t}h^{T}(X(s))dY(s)-\frac{1}{2}\int_{0}^{t}|h|^{2}(X(s))ds\right\} ,
\]
that is, 
\[
L(t)=1+\int_{0}^{t}L(t)h^{T}(X(s))dY(s),
\]
and assume that $h$ satisfies conditions ensuring that $L$ is a
martingale. For example, we can assume $h$ is bounded. Then defining
$dP_{|{\cal F}_{t}}=L(t)dQ_{|{\cal F}_{t}}$, under $P$, by Theorem
\ref{cmmart2}, $B$ and $W$ given by 
\[
W(t)=Y(t)-\int_{0}^{t}h(X(s))ds
\]
are independent standard Brownian motions. Consequently, under $P$,
$X$ and $Y$ have the joint distributions of (\ref{sig1}) and (\ref{obs1}).

\subsubsection{Filtering equations}

Let $\{{\cal F}_{t}^{Y}\}$ be the (completed) filtration generated
by the observations $Y$. Then, assuming $\mathbb{E}^{Q}[\left|\varphi(X(t))L(t)\right|]<\infty$,
we have the Kallianpur-Striebel formula \citep{KS68} 
\[
\mathbb{E}^{P}\left[\varphi(X(t))|{\cal F}_{t}^{Y}\right]=\frac{\mathbb{E}^{Q}\left[\varphi(X(t))L(t)|{\cal F}_{t}^{Y}\right]}{\mathbb{E}^{Q}\left[L(t)|{\cal F}_{t}^{Y}\right]}.
\]
Let $X_{1},X_{2},\ldots$ be iid copies of $X$ that are independent
and independent of $Y$ under $Q$, and let 
\[
L_{k}(t)=1+\int_{0}^{t}L_{k}(s)h(X_{k}(s))^{T}dY(s).
\]
We define the {\em unnormalized conditional distribution} $\rho$
by 
\[
\rho_{s}(\varphi)\equiv\mathbb{E}^{Q}\left[\varphi(X(s))L(s)|{\cal F}_{s}^{Y}\right]=\mathbb{E}^{Q}\left[\varphi(X_{k}(s))L_{k}(s)|{\cal F}_{s}^{Y}\right],
\]
and the exchangeability of $\{(X_{k},Y)\}$ ensures 
\begin{equation}
\lim_{N\rightarrow\infty}\frac{1}{N}\sum_{k=1}^{N}\varphi(X_{k}(s))L_{k}(s)=\mathbb{E}^{Q}[\varphi(X(s))L(s)|{\cal F}_{s}^{Y}].\label{mc1}
\end{equation}
With reference to Appendix \ref{limpr}, since $\{(X_{k},L_{k})\}$
is exchangeable, by de Finetti's theorem, the sequence determines
a random probability measure, which we will call the de Finetti measure,
\[
\Xi=\lim_{N\rightarrow\infty}\frac{1}{N}\sum_{k=1}^{N}\delta_{(X_{k},L_{k})}\quad a.s.
\]
on $C_{\mathbb{R}^{d_{X}}\times[0,\infty)}[0,\infty)$ and a probability
measure-valued process 
\[
V(\cdot)=\lim_{N\rightarrow\infty}\frac{1}{N}\sum_{k=1}^{N}\delta_{(X_{k}(\cdot),L_{k}(\cdot))}\quad\mbox{{\rm in probability}},
\]
in $C_{{\cal P}(\mathbb{R}^{d_{X}}\times[0,\infty))}[0,\infty)$.
Of course, the unnormalized conditional distribution is 
\[
\rho_{t}(C)=\int_{\mathbb{R}^{d_{X}}\times[0,\infty)}a{\bf 1}_{C}(x)V(dx\times da,t).
\]
\begin{lemma} Assume $|h|$ is bounded. Then as with (\ref{momest1}),
for each $m\geq2$, there exists $D_{3}$ such that 
\begin{equation}
\mathbb{E}^{Q}[\sup_{0\leq s\leq t}L(s)^{m}]\leq e^{D_{3}t}.\label{momest2}
\end{equation}
\end{lemma}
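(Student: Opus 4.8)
The plan is to reduce everything to standard facts about stochastic exponentials, using only the boundedness of $|h|$. Write $C=\sup_{x}|h(x)|<\infty$ and set $M(t)=\int_{0}^{t}h^{T}(X(s))\,dY(s)$, which under $Q$ (where $Y$ is a Brownian motion independent of $X$) is a continuous local martingale with quadratic variation $\langle M\rangle(t)=\int_{0}^{t}|h|^{2}(X(s))\,ds\le C^{2}t$. In this notation $L$ is the stochastic exponential of $M$, and
\[
L(t)^{m}=\exp\Big\{mM(t)-\tfrac{m}{2}\langle M\rangle(t)\Big\}.
\]
The key algebraic observation is that $L^{m}$ is \emph{not} itself a stochastic exponential, but it factors as one times a bounded drift term.

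First I would isolate a genuine exponential martingale inside $L^{m}$ by completing the square in the exponent:
\[
L(t)^{m}=\mathcal{E}(mM)(t)\,\exp\Big\{\tfrac{m^{2}-m}{2}\langle M\rangle(t)\Big\}\le \mathcal{E}(mM)(t)\,e^{\frac{m(m-1)}{2}C^{2}t},
\]
where $\mathcal{E}(mM)=\exp\{mM-\tfrac{m^{2}}{2}\langle M\rangle\}$. Because $\langle mM\rangle(t)=m^{2}\langle M\rangle(t)\le m^{2}C^{2}t$, Novikov's criterion applies to $mM$, so $\mathcal{E}(mM)$ is a true $Q$-martingale with $\mathbb{E}^{Q}[\mathcal{E}(mM)(t)]=1$. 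Taking expectations therefore gives the one-time moment bound $\mathbb{E}^{Q}[L(t)^{m}]\le e^{\frac{m(m-1)}{2}C^{2}t}$. (An alternative route to this same bound, which avoids invoking Novikov, is to apply Itô's formula to obtain $d(L^{m})=mL^{m}h^{T}\,dY+\tfrac12 m(m-1)L^{m}|h|^{2}\,ds$, localize the resulting stochastic integral, take expectations, and close a Gronwall inequality in $g(t)=\mathbb{E}^{Q}[L(t)^{m}]$ using $|h|^{2}\le C^{2}$.)

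Second, I would upgrade this to the running supremum using Doob's maximal inequality. Since $h$ is bounded, $L$ is a nonnegative true $Q$-martingale, and since $x\mapsto x^{m}$ is convex on $[0,\infty)$, the process $L^{m}$ is a submartingale and $\sup_{0\le s\le t}L(s)^{m}=(\sup_{0\le s\le t}L(s))^{m}$. As $m\ge 2>1$, Doob's $L^{m}$ inequality yields
\[
\mathbb{E}^{Q}\big[\sup_{0\le s\le t}L(s)^{m}\big]\le\Big(\tfrac{m}{m-1}\Big)^{m}\mathbb{E}^{Q}[L(t)^{m}]\le\Big(\tfrac{m}{m-1}\Big)^{m}e^{\frac{m(m-1)}{2}C^{2}t},
\]
which is the asserted estimate (\ref{momest2}).

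The step requiring the most care is the passage to the supremum. The one-time bound is essentially automatic, but $\mathbb{E}^{Q}[\sup_{s\le t}L(s)]$ is \emph{not} controlled by $\mathbb{E}^{Q}[L(t)]$ through the martingale property alone (the $L^{1}$ maximal inequality is only weak-type), so one genuinely needs the strict inequality $m>1$ in order for Doob's $L^{m}$ inequality to apply. I should also flag that Doob introduces the multiplicative constant $(m/(m-1))^{m}$; for $m\ge 2$ this is bounded (indeed $\le e^{2}$), matching the multiplicative constant already present in the analogous estimate (\ref{momest1}), and it is to be absorbed into the choice of $D_{3}$ in the statement. A pure exponential $e^{D_{3}t}$ with no prefactor cannot in fact hold for all $t\ge 0$, since the supremum of the Brownian-driven $L$ over $[0,t]$ forces $\mathbb{E}^{Q}[\sup_{s\le t}L(s)^{m}]=1+O(\sqrt{t})$ as $t\downarrow 0$; thus (\ref{momest2}) is to be read as holding with $D_{3}=D_{3}(m)$ chosen so that $(m/(m-1))^{m}e^{\frac{m(m-1)}{2}C^{2}t}\le e^{D_{3}t}$ on the relevant range, exactly as in (\ref{momest1}).
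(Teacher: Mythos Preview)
Your argument is correct. The paper does not actually supply a proof of this lemma; it simply points to the analogous estimate (\ref{momest1}) and, implicitly, to the ``exercise with It\^o's formula'' mentioned there. The It\^o/Gronwall route you sketch as an alternative is exactly what the paper has in mind, so in that sense you cover the intended approach. Your primary argument via the factorization $L^{m}=\mathcal{E}(mM)\exp\{\tfrac{m(m-1)}{2}\langle M\rangle\}$ together with Novikov and Doob's $L^{m}$ inequality is a slightly different and arguably cleaner packaging: it isolates the martingale part explicitly and makes the role of the boundedness of $h$ transparent, whereas the It\^o/Gronwall approach hides the same structure inside a differential inequality. Both give the same exponential rate $\tfrac{m(m-1)}{2}C^{2}$ for the one-time moment.

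Your observation about the constant is also correct and worth recording: the running supremum forces a multiplicative prefactor (your $(m/(m-1))^{m}$), and the bound cannot literally hold as $e^{D_{3}t}$ for all $t>0$ since the left-hand side behaves like $1+c\sqrt{t}$ near $t=0$. This matches the form of (\ref{momest1}), which does carry a prefactor $D_{1}^{m}$; the lemma should be read the same way.
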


We assume (\ref{momest1}) and (\ref{momest2}) for all $m>0$. Then,
for $\varphi\in C^{2}(\mathbb{R}^{d_{X}})$ satisfying $|\varphi(x)|\leq C_{1}+C_{2}|x|^{m}$,
\[
\varphi(X(t))=\varphi(X(0))+\int_{0}^{t}\nabla\varphi^{T}(X(s))\sigma(X(s))dB(s)+\int_{0}^{t}A\varphi(X(s))ds,
\]
where for $a(x)=\sigma(x)\sigma^{T}(x)$ 
\[
A\varphi(x)=\frac{1}{2}\sum_{i,j}a_{ij}(x)\partial_{i}\partial_{j}\varphi(x)+b(x)\cdot\nabla\varphi(x),
\]
and 
\begin{eqnarray*}
\varphi(X(t))L(t) & = & \varphi(X(0))+\int_{0}^{t}L(s)d\varphi(X(s))ds+\int_{0}^{t}\varphi(X(s))dL(s)\\
 & = & \varphi(X(0))+\int_{0}^{t}L(s)\nabla\varphi(X(s))^{T}\sigma(X(s))dB(s)\\
 &  & \quad+\int_{0}^{t}L(s)A\varphi(X(s))ds+\int_{0}^{t}\varphi(X(s))L(s)h(X(s))^{T}dY(s).
\end{eqnarray*}
Then for $\{(X_{k},L_{k})\}$ given above, 
\begin{eqnarray}
\varphi(X_{k}(t))L_{k}(t) & = & \varphi(X_{k}(0))+\int_{0}^{t}L_{k}(s)\nabla\varphi(X_{k}(s))^{T}\sigma(X_{k}(s))dB_{k}(s)\label{geq1}\\
 &  & \qquad+\int_{0}^{t}L_{k}(s)A\varphi(X_{k}(s))ds\nonumber \\
 &  & \qquad+\int_{0}^{t}\varphi(X_{k}(s))L_{k}(s)h(X_{k}(s))^{T}dY(s).\nonumber 
\end{eqnarray}
We claim that we can average both sides as in (\ref{mc1}) and obtain
the following:

\begin{theorem} For the model in Section \ref{mod1}, the unnormalized
conditional distribution $\rho$ satisfies 
\begin{equation}
\rho_{t}(\varphi)=\rho_{0}(\varphi)+\int_{0}^{t}\rho_{s}(A\varphi)ds+\int_{0}^{t}\rho_{s}(\varphi h)dY(s),\label{mod1zak}
\end{equation}
the Zakai equation, and by Itô's formula, we have the Kushner-Stratonovich
equation. 
\begin{eqnarray*}
\pi_{t}\varphi & = & \mathbb{E}^{P}[\varphi(X(t))|{\cal F}_{t}^{Y}]=\frac{\rho_{t}(\varphi)}{\rho_{t}(1)}\\
 & = & \frac{\rho_{0}(\varphi)}{\rho_{0}(1)}+\int_{0}^{t}\frac{1}{\rho_{s}(1)}d\rho_{s}(\varphi)-\int_{0}^{t}\frac{\rho_{s}(\varphi)}{\rho_{s}(1)^{2}}d\rho_{s}(1)\\
 &  & \qquad+\int_{0}^{t}\frac{\rho_{s}(\varphi)}{\rho_{s}(1)^{3}}d[\rho_{\cdot}(1)]_{s}-\int_{0}^{t}\frac{1}{\rho_{s}(1)^{2}}d[\rho_{\cdot}(\varphi),\rho_{\cdot}(1)]_{s}\\
 & = & \pi_{0}\varphi+\int_{0}^{t}\pi_{s}A\varphi ds+\int_{0}^{t}(\pi_{s}\varphi h-\pi_{s}\varphi\pi_{s}h)dY(s)\\
 &  & \qquad+\int_{0}^{t}\sigma^{2}\pi_{s}\varphi\pi_{s}h^{2}ds-\int_{0}^{t}\sigma^{2}\pi_{s}\varphi h\pi_{s}hds\\
 & = & \pi_{0}\varphi+\int_{0}^{t}\pi_{s}A\varphi ds+\int_{0}^{t}(\pi_{s}\varphi h-\pi_{s}\varphi\pi_{s}h)(dY(s)-\pi_{s}hds).
\end{eqnarray*}
\end{theorem}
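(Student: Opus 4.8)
The theorem claims that the averaged (de Finetti) limit of equation (\ref{geq1}) yields the Zakai equation (\ref{mod1zak}). The plan is to establish that one can pass to the $N\to\infty$ limit in the normalized sum $\frac{1}{N}\sum_{k=1}^N \varphi(X_k(t))L_k(t)$ term by term, interchanging the limit with the time integrals and stochastic integrals appearing in (\ref{geq1}).

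First I would record the four terms produced by averaging (\ref{geq1}): the initial term $\frac{1}{N}\sum_k\varphi(X_k(0))$, the $B_k$-stochastic-integral term, the Lebesgue integral $\int_0^t \frac{1}{N}\sum_k L_k A\varphi(X_k)\,ds$, and the $Y$-stochastic-integral term $\int_0^t \frac{1}{N}\sum_k \varphi(X_k)L_k h(X_k)^T\,dY(s)$. By (\ref{mc1}) applied with the relevant integrand in place of $\varphi$, each pointwise-in-time average converges $Q$-a.s.\ (and in $L^1$) to the corresponding $\rho_s(\cdot)$ functional: the initial term to $\rho_0(\varphi)$, the drift integrand to $\rho_s(A\varphi)$, and the observation integrand to $\rho_s(\varphi h)$. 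The crucial structural fact is that the Brownian motions $B_k$ are mutually independent and independent of $Y$, so the $B_k$-martingale term has mean zero and its quadratic variation is $O(1/N)$; hence it converges to zero in $L^2$ and drops out of the limit. This is why no $dB$ term survives in the Zakai equation.

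The main obstacle is justifying the interchange of the de Finetti limit with the $dY$ stochastic integral, since $\int_0^t(\cdot)\,dY(s)$ is not continuous for pointwise convergence of integrands. The plan is to control this via the Itô isometry together with the moment estimates (\ref{momest1}) and (\ref{momest2}). Writing $G_N(s)=\frac{1}{N}\sum_k \varphi(X_k(s))L_k(s)h(X_k(s))$ and $G(s)=\rho_s(\varphi h)$, I would show
\begin{equation}
\mathbb{E}^Q\!\left[\Bigl(\int_0^t (G_N(s)-G(s))\,dY(s)\Bigr)^2\right]=\mathbb{E}^Q\!\left[\int_0^t |G_N(s)-G(s)|^2\,ds\right]\to 0. \non
\end{equation}
The integrand $|G_N(s)-G(s)|^2$ tends to $0$ pointwise by (\ref{mc1}), while the boundedness of $h$, the polynomial growth of $\varphi$, and the bounds (\ref{momest1})--(\ref{momest2}) provide a uniform-in-$N$, $Q$-integrable dominating function (via the $L^2$ exchangeable martingale law of large numbers and Cauchy--Schwarz), so dominated convergence applies to pass the limit inside the $ds$ integral. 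This yields $L^2$-convergence of the stochastic integrals and identifies the limit as $\int_0^t \rho_s(\varphi h)\,dY(s)$.

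Combining the four limits gives the Zakai equation (\ref{mod1zak}) for a fixed $\varphi$ of polynomial growth, $Q$-a.s.\ for each $t$; a continuity argument in $t$ (both sides being continuous processes) upgrades this to an almost-sure identity of processes. Finally, the Kushner--Stratonovich equation follows mechanically: since $\pi_t\varphi=\rho_t(\varphi)/\rho_t(1)$, one applies Itô's formula to the ratio using the semimartingale decompositions of $\rho_t(\varphi)$ and $\rho_t(1)$ read off from (\ref{mod1zak}), computes the quadratic variations $d[\rho_\cdot(1)]_s=\rho_s(h)^2\,ds$ and $d[\rho_\cdot(\varphi),\rho_\cdot(1)]_s=\rho_s(\varphi h)\rho_s(h)\,ds$, and collects terms to obtain the innovation form with $dY(s)-\pi_s h\,ds$; this is exactly the displayed computation in the statement and requires no further limiting arguments.
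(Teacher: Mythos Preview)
Your proof is correct and follows the same overall strategy as the paper: average (\ref{geq1}), use the de Finetti/exchangeability limit for the integrands, kill the $B_k$ term via its $O(1/N)$ quadratic variation, and derive Kushner--Stratonovich from Zakai by It\^o's formula applied to the ratio.

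The one methodological difference is in how you pass the limit through the $dY$ integral. The paper invokes Lemma~\ref{exchconv} to get uniform-in-$s$ almost-sure convergence of the integrand $G_N(s)\to\rho_s(\varphi h)$, and then appeals to the stochastic-integral convergence theorems of Kurtz--Protter~\citep{KP91} to conclude $\int_0^\cdot G_N\,dY\to\int_0^\cdot \rho_\cdot(\varphi h)\,dY$ as processes. You instead argue directly via the It\^o isometry and dominated convergence in $L^2(ds\times dQ)$. Your route is more elementary and self-contained (no external convergence theorem needed), and is perfectly adequate here since $Y$ is a Brownian motion under $Q$ and the moment bounds (\ref{momest1})--(\ref{momest2}) supply the required domination. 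The paper's route, by contrast, yields convergence uniformly on compacts in $t$ in one stroke and would extend without change to more general semimartingale integrators; this is why the authors prefer it, as the same machinery is reused in the later sections.
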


\begin{proof} The term on the left and the first term on the right
of (\ref{geq1}) average as in (\ref{mc1}). The average over $1\leq k\leq N$
of the second term on the right is a continuous, mean zero martingale
with quadratic variation 
\[
\frac{1}{N^{2}}\sum_{k=1}^{N}\int_{0}^{t}L_{k}(s)^{2}\nabla\varphi(X_{k}(s))^{T}\sigma(X_{k}(s))\sigma(X_{k}(s))^{T}\nabla\varphi(X_{k}(s))ds,
\]
which, under the growth and moment conditions above, converges to
zero implying the average converges to zero by Doob's inequality.
The averages of the integrands in the last two terms converge to the
integrands in the last two terms of (\ref{mod1zak}) by Lemma \ref{exchconv},
so the next to the last terms converges by elementary calculus and
the last term converges by the stochastic integral convergence results
in \citep{KP91}. \end{proof}

In the terminology of \citep{KX99}, the infinite sequence $\{(X_{k},L_{k})\}$
gives a {\em particle representation\/} of the Zakai equation.
The point here is not just that $\{(X_{k},L_{k})\}$ gives a derivation
of the Zakai equation. The representation can, for example, be used
to prove uniqueness (see \citep{KX99}) and derive approximations
(see Section \ref{secterr}).

\subsection{Spatial observations with additive white noise}

\label{mod2}

\subsubsection{The model}

The basic outline of the argument above works in many different situations.
We again take the signal to be a diffusion in $\mathbb{R}^{d_{X}}$,
but now we assume that the stochastic inputs include both a $d_{B}$-dimensional
standard Brownian motion $B$ and a space-time Gaussian white noise
$W$. In particular, 
\begin{equation}
X(t)=X(0)+\int_{0}^{t}\sigma(X(s))dB(s)+\int_{0}^{t}b(X(s))ds+\int_{S_{0}\times[0,t]}\alpha(X(s),u)W(du\times ds),\label{sig2}
\end{equation}
where $\mathbb{E}[W(C,t)]=0$ and 
\[
\mathbb{E}[W(C,t)W(D,s)]=\mu_{0}(C\cap D)t\wedge s,
\]
$t,s\geq0$ and $C,D\in{\cal B}(S_{0})$, the Borel sets for some
complete, separable metric space, $S_{0}$.

We assume that the observations are given by $Y(C,t)$, $t\geq0$
and $C\in{\cal B}(S_{0})$, where 
\begin{equation}
Y(C,t)=\int_{0}^{t}\int_{C}h(X(s),u)\mu_{0}(du)ds+W(C,t).\label{obs2}
\end{equation}
Consequently, $\sigma$ is a $d_{X}\times d_{B}$-dimensional matrix-valued
function, $b$ and $\alpha$ are $\mathbb{R}^{d_{X}}$-valued, and
$h$ is $\mathbb{R}$-valued. For simplicity, assume $\sigma,b,\alpha,$
and $h$ are bounded and continuous and that $\mu_{0}$ is a finite
measure.

Then the generator for $X$ is 
\[
A\varphi(x)=\frac{1}{2}\sum a_{ij}(x)\partial_{i}\partial_{j}\varphi(x)+\sum b_{i}(x)\partial_{i}\varphi(x),\quad\varphi\in C_{c}^{2}\left(\mathbb{R}^{d_{X}}\right),
\]
where 
\[
a(x)=\sigma(x)\sigma(x)^{T}+\int_{S_{0}}\alpha(x,u)\alpha(x,u)^{T}\mu_{0}(du).
\]

We can write 
\begin{eqnarray}
X(t) & = & X(0)+\int_{0}^{t}\sigma(X(s))dB(s)+\int_{S_{0}\times[0,t]}\alpha(X(s),u)Y(du\times ds)\label{sig2b}\\
 &  & \quad+\int_{0}^{t}(b(X(s))-\int_{S_{0}}\alpha(X(s),u)h(X(s),u)\mu_{0}(du))ds,\nonumber 
\end{eqnarray}
so $Y$ is what we know and $B$ is what we don't know.

\subsubsection{The reference probability space}

Consequently, we assume $B$ and $Y$ are defined on a measurable
space $(\Omega,{\cal F})$, and there is a probability distribution
$Q$ on ${\cal F}$ such that under $Q$, $Y$ is Gaussian white noise
on $S_{0}\times[0,\infty)$ with $\mathbb{E}[Y(C,t)]=0$, $t\geq0$,
$C\in{\cal B}(S_{0})$ and 
\[
\mathbb{E}[Y(C,t)Y(D,s)]=\mu_{0}(C\cap D)t\wedge s,
\]
and $B$ is a standard $d_{B}$ dimensional Brownian motion independent
of $Y$. Both are independent of $X(0)$.

Then take $dP_{|{\cal F}_{t}}=L(t)dQ_{|{\cal F}_{t}}$ where 
\[
L(t)=1+\int_{S_{0}\times[0,t]}L(s)h(X(s),u)Y(du\times ds),
\]
and under $P$, $(X,Y)$ has the joint distribution of the original
model.

\subsubsection{Filtering equations}

Under $Q$, $X$ is a diffusion with generator 
\[
A^{Q}\varphi(x)=\frac{1}{2}\sum a_{ij}(x)\partial_{i}\partial_{j}\varphi(x)+\sum c_{i}(x)\partial_{i}\varphi(x),
\]
where 
\[
c_{i}(x)=b_{i}(x)-\int_{S_{0}}\alpha(x,u)h(x,u)\mu_{0}(du),
\]
and under $P$, $X$ is a diffusion with the original generator 
\[
A\varphi(x)=\frac{1}{2}\sum a_{ij}(x)\partial_{i}\partial_{j}\varphi(x)+\sum b_{i}(x)\partial_{i}\varphi(x),
\]
that is, $X$ is the signal of the original model.

Then 
\begin{eqnarray*}
 &  & \varphi(X(t))L(t)\\
 &  & \quad=\varphi(X(0))+\int_{0}^{t}L(s)\nabla\varphi(X(s))^{T}\sigma(X(s))dB(s)\\
 &  & \qquad+\int_{S_{0}\times[0,t]}L(s)(\nabla\varphi(X(s))\cdot\alpha(X(s),u))Y(du\times ds)\\
 &  & \qquad+\int_{0}^{t}L(s)A^{Q}\varphi(X(s))ds+\int_{S_{0}\times[0,t]}L(s)\varphi(X(s))h(X(s),u))Y(du\times ds)\\
 &  & \qquad+\int_{0}^{t}L(s)\nabla\varphi(X(s))^{T}\int_{S_{0}}\alpha(X(s),u)h(X(s),u)\mu_{0}(du)ds.
\end{eqnarray*}

To obtain the particle representation, we let $B_{k}$ be independent,
standard Brownian motions, independent of $Y$ on $(\Omega,{\cal F},Q)$.
Let 
\begin{eqnarray*}
X_{k}(t) & = & X_{k}(0)+\int_{0}^{t}\sigma(X_{k}(s))dB_{k}(s)+\int_{S_{0}\times[0,t]}\alpha(X_{k}(s),u)Y(du\times ds)\\
 &  & \quad+\int_{0}^{t}\left(b(X_{k}(s))-\int_{S_{0}}\alpha(X_{k}(s),u)h(X_{k}(s),u)\mu_{0}(du\right)ds\\
 &  & \qquad\\
L_{k}(t) & = & 1+\int_{S_{0}\times[0,t]}L_{k}(s)h(X_{k}(s),u)Y(du\times ds).
\end{eqnarray*}
Then, as before, 
\[
\rho_{t}(\varphi)=\mathbb{E}^{P}[\varphi(X(t))L(t)|{\cal F}_{t}^{Y}]=\lim_{N\rightarrow\infty}\frac{1}{N}\sum_{k=1}^{N}\varphi(X_{k}(t))L_{k}(t).
\]

Since 
\begin{eqnarray*}
 &  & \varphi(X_{k}(t))L_{k}(t)\\
 &  & \quad=\varphi(X_{k}(0))+\int_{0}^{t}L_{k}(s)\nabla\varphi(X_{k}(s))^{T}\sigma(X_{k}(s))dB_{k}(s)\\
 &  & \qquad+\int_{S_{0}\times[0,t]}L_{k}(s)(\nabla\varphi(X_{k}(s))\cdot\alpha(X_{k}(s),u)Y(du\times ds)\\
 &  & \qquad+\int_{0}^{t}L_{k}(s)A\varphi(X_{k}(s))ds+\varphi(X_{k}(s))h(X_{k}(s),u))Y(du\times ds),
\end{eqnarray*}
under appropriate moment conditions and applying convergence results
of \citep{KP96} to the $Y$ integral instead of \citep{KP91}, averaging
gives

\begin{theorem} 
\begin{eqnarray*}
\rho_{t}(\varphi) & = & \rho_{0}(\varphi)+\int_{0}^{t}\rho_{s}(A\varphi)ds\\
 &  & \quad+\int_{S_{0}\times[0,t]}\rho_{s}(\nabla\varphi\cdot\alpha(\cdot,u)+\varphi h(\cdot,u))Y(du\times ds),
\end{eqnarray*}
determines the unnormalized conditional distribution and the corresponding
Kushner-Stratonovich equation is 
\begin{eqnarray*}
\pi_{t}\varphi & = & \frac{\rho_{t}(\varphi)}{\rho_{t}(1)}\\
 & = & \pi_{0}\varphi+\int_{0}^{t}\pi_{s}A\varphi ds\\
 &  & \qquad+\int_{S_{0}\times[0,t]}\Big(\pi_{s}(\nabla\varphi\cdot\alpha(\cdot,u)+\varphi h(\cdot,u))-\pi_{s}\varphi\pi_{s}h(\cdot,u)\Big)Y(du\times ds)\\
 &  & \qquad+\int_{0}^{t}\int_{S_{0}}\Big(\pi_{s}\varphi\pi_{s}h(\cdot,u)-\pi_{s}(\nabla\varphi\cdot\alpha(\cdot,u)+\varphi h(\cdot,u))\Big)\pi_{s}h(\cdot,u)\mu_{0}(du)ds\\
 & = & \pi_{0}\varphi+\int_{0}^{t}\pi_{s}A\varphi ds\\
 &  & \qquad+\int_{S_{0}\times[0,t]}\Big(\pi_{s}(\nabla\varphi\cdot\alpha(\cdot,u)+\varphi h(\cdot,u))-\pi_{s}\varphi\pi_{s}h(\cdot,u)\Big)\tilde{Y}(du\times ds)
\end{eqnarray*}
where 
\[
\tilde{Y}(C,t)=Y(C,t)-\int_{0}^{t}\int_{C}\pi_{s}h(\cdot,u)\mu_{0}(du)ds.
\]
\end{theorem}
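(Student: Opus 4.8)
The plan is to mirror the proof of the Zakai equation in Section~\ref{mod1}: take the per-particle identity for $\varphi(X_k(t))L_k(t)$ displayed just above the theorem, average it over $1\le k\le N$, and let $N\to\infty$, identifying each limit through the de Finetti convergence~(\ref{mc1}) and Lemma~\ref{exchconv}. The essential structural difference from the first model is that the observation $Y$ is now a space-time white noise that is \emph{common} to every particle, so the two stochastic integrals driven by $Y$ cannot be made to vanish and must instead survive the limit.

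First I would dispose of the terms that behave exactly as in Section~\ref{mod1}. The left-hand side $\frac1N\sum_k\varphi(X_k(t))L_k(t)$ and the initial term $\frac1N\sum_k\varphi(X_k(0))$ converge to $\rho_t(\varphi)$ and $\rho_0(\varphi)$ by~(\ref{mc1}). The $B_k$-martingale term is a continuous mean-zero martingale whose quadratic variation is
\[
\frac{1}{N^2}\sum_{k=1}^N\int_0^t L_k(s)^2\,\nabla\varphi(X_k(s))^T\sigma(X_k(s))\sigma(X_k(s))^T\nabla\varphi(X_k(s))\,ds;
\]
under the boundedness of $\sigma$ and $\nabla\varphi$ together with the moment bound on $L_k$ this is $O(1/N)$, so Doob's inequality forces the average to $0$. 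The drift term $\frac1N\sum_k\int_0^t L_k(s)A\varphi(X_k(s))\,ds$ converges to $\int_0^t\rho_s(A\varphi)\,ds$ by applying Lemma~\ref{exchconv} to the integrand and dominated convergence in $s$.

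The heart of the argument is the passage to the limit in the two white-noise integrals, namely the average of $\int_{S_0\times[0,t]}L_k(s)\big(\nabla\varphi(X_k(s))\cdot\alpha(X_k(s),u)+\varphi(X_k(s))h(X_k(s),u)\big)Y(du\times ds)$. Here I would first show, again via Lemma~\ref{exchconv}, that the averaged integrands converge (uniformly on compacts, in probability) to $\rho_s\big(\nabla\varphi\cdot\alpha(\cdot,u)+\varphi h(\cdot,u)\big)$, and then invoke the stochastic-integral convergence theorem of~\citep{KP96} — the martingale-measure/white-noise analogue of the semimartingale result~\citep{KP91} used in Section~\ref{mod1} — to conclude that the integrals converge to $\int_{S_0\times[0,t]}\rho_s(\nabla\varphi\cdot\alpha(\cdot,u)+\varphi h(\cdot,u))\,Y(du\times ds)$. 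This is the main obstacle: one must verify the hypotheses of~\citep{KP96}, a uniform-integrability / ``good sequence'' condition on the integrands against the fixed integrator $Y$, which is exactly where the bounded-coefficient assumptions and the moment estimates in the spirit of~(\ref{momest1})–(\ref{momest2}) for $(X_k,L_k)$ enter. Combining the four limits yields the displayed equation for $\rho$.

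Finally, for the Kushner-Stratonovich equation I would apply Itô's formula to $\pi_t\varphi=\rho_t(\varphi)/\rho_t(1)$, as in Section~\ref{mod1} but now with both $\rho_t(\varphi)$ and $\rho_t(1)$ driven by the common white noise $Y$. The cross-variation $d[\rho_\cdot(\varphi),\rho_\cdot(1)]_s$ and the variation $d[\rho_\cdot(1)]_s$ are computed from the covariance $\mathbb{E}[Y(C,t)Y(D,s)]=\mu_0(C\cap D)\,t\wedge s$, which produces the $\int_{S_0}(\cdots)\mu_0(du)$ correction appearing in the drift; collecting the $Y$-integral terms and absorbing the drift correction into the innovation $\tilde{Y}(C,t)=Y(C,t)-\int_0^t\int_C\pi_s h(\cdot,u)\mu_0(du)\,ds$ then gives the stated form.
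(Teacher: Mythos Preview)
Your proposal is correct and follows essentially the same approach as the paper: average the per-particle identity, kill the $B_k$-integral by its $O(1/N)$ quadratic variation, pass the drift through by exchangeability, and use the convergence results of \citep{KP96} (in place of \citep{KP91}) for the common white-noise integral; the Kushner--Stratonovich form then follows by It\^o's formula for the ratio. The paper's own argument is in fact just the one-line remark ``under appropriate moment conditions and applying convergence results of \citep{KP96} to the $Y$ integral instead of \citep{KP91}, averaging gives'' the theorem, so you have fleshed out exactly what the authors intended.
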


\subsection{Cluster detection in spatial point processes}

\label{mod3} 

\subsubsection{The model}

The following example is a simplified version of the models considered
in \citep{Wu09,Wu07}. Natural settings in which this problem might
arise include internet packets that form a malicious attack on a computer
system, financial transactions that form a collusive trading scheme,
and, the example consider in \citep{Wu09}, earthquakes that form
a single seismic event.

Let $E$ be a measurable space and ${\cal C}(E)$ be the collection
of counting measures on $E$ and ${\cal C}(E\times[0,\infty))$ the
collection of counting measures on $E\times[0,\infty)$. The observations
form a marked point process $O$ with marks in $E$, that is for $t\geq0$,
$O(\cdot,t)\in{\cal C}(E)$, that include the cluster $C$ (the signal)
and the noise $N$ 
\[
O(A,t)=N(A,t)+C(A,t),\quad A\in{\cal B}(E),t\geq0.
\]
For simplicity, we will assume the $O(E,t)$ is finite for all $t$.

For $\xi_{1}$ and $\xi_{2}$ independent Poisson random measures
on $E\times[0,\infty)\times[0,\infty)$ with mean measure $ $$\nu\times\ell\times\ell$,
$\ell$ denoting Lebesgue measure, $\gamma$ a nonnegative function
on $E$, and $\lambda:E\times{\cal C}(E\times[0,\infty))\rightarrow[0,\infty)$,
$N$ and $C$ can be written as solutions of 
\begin{eqnarray}
N(A,t) & = & \int_{A\times[0,\infty)\times[0,t]}{\bf 1}_{[0,\gamma(u)]}(v)\xi_{1}(du\times dv\times ds)\label{pert}\\
C(A,t) & = & \int_{A\times[0,\infty)\times[0,t]}{\bf 1}_{[0,\lambda(u,\eta_{s-})]}(v)\xi_{2}(du\times dv\times ds),\nonumber 
\end{eqnarray}
where $\eta$ is given by 
\[
\eta_{t}(A\times[0,r])=\int_{A\times[0,t]}{\bf 1}_{A}(u){\bf 1}_{[0,r]}(s)C(du\times ds),\quad A\in{\cal B}(E),r\in[0,t],
\]
that is, $\eta_{t}$ is the collection of points in the cluster up
to time $t$. The noise, $N$, is a space-time Poisson process. We
assume there exists $\lambda_{0}$ such that $\lambda(u,\eta)\leq\lambda_{0}(u)$
for all $\eta$ and that $\int_{E}\gamma(u)\nu(du)<\infty$ and $\int_{E}\lambda_{0}(u)\nu(du)<\infty$.
Assuming $N(E,0)=C(E,0)=0$, these assumptions assure that $N(E,t)$
is Poisson distributed with mean $t\int_{E}\gamma(u)\nu(du)$ and
that $C(E,\cdot)$ is dominated by a Poisson process.

Of course, if $E$ is a finite set, this model is essentially a filtering
model for counting processes as studied by Bremaud \citep{Bre81}.

\subsubsection{The reference probability space}

On $(\Omega,{\cal F},Q)$, let $N$ and $C$ be independent Poisson
random measures with mean measures $\nu_{0}(du\times ds)=\gamma(u)\nu(du)ds$
and $\nu_{1}(du\times ds)=\lambda_{0}(u)\nu(du)ds$ respectively.
At each point $(u,t)$ in $O=N+C$, let $\theta(u,t)=1$, if $(u,t)\in C$
and $\theta(u,t)=0$ otherwise. Then $Q\{\theta(u,t)=1|O\}=\frac{\lambda_{0}(u)}{\lambda_{0}(u)+\gamma(u)}$,
and hence, under $Q$, what is known, $O$, is independent of what
is not known, $\{\theta(u,t):(u,t)\in O\}$. Note also that the $\theta(u,t)$
are independent of each other, and $\eta$ and $\theta$ are related
by 
\begin{equation}
\eta_{t}(A\times[0,r])=\sum_{(u,s)\in O(A\times[0,r])}\theta(u,s),\quad A\in{\cal B}(E),r\in[0,t].\label{etatht}
\end{equation}

Under $Q$, 
\begin{equation}
\tilde{C}(A,t)=C(A,t)-\int_{0}^{t}\int_{A}\lambda_{0}(u)\nu(du)ds,\quad A\in{\cal B}(E),t\geq0,\label{ccent}
\end{equation}
is a martingale random measure. In particular, for each $A\in{\cal B}(E)$,
(\ref{ccent}) is a $\{{\cal F}_{t}\}$-martingale for ${\cal F}_{t}=\sigma(C(A,s),N(A,s):s\leq t,A\in{\cal B}(E))$.

Let $L$ satisfy 
\begin{eqnarray*}
L(t) & = & 1+\int_{E\times[0,t]}\left(\frac{\lambda(u,\eta_{s-})}{\lambda_{0}(u)}-1\right)L(s-)\left(C(du\times ds)-\lambda_{0}(u)\nu(du)ds\right)\\
 & = & 1+\int_{E\times[0,t]}\left(\frac{\lambda(u,\eta_{s-})}{\lambda_{0}(u)}-1\right)L(s-)\theta(u,t)O(du\times ds)\\
 &  & \qquad-\int_{E\times[0,t]}\left(\lambda_{0}(u,\eta_{s-})-\lambda_{0}(u)\right)L(s)\nu(du)ds.
\end{eqnarray*}
At each point $(u,s)\in C$ 
\[
L(s)=\frac{\lambda(u,\eta_{s-})}{\lambda_{0}(u)}L(s-),
\]
so $L$ is nonnegative and $L$ is an $\{{\cal F}_{t}\}$-martingale
under $Q$.

Define $dP_{|{\cal F}_{t}}=L(t)dQ_{|{\cal F}_{t}}$. Under $P$, for
all $A$, by the results in Section \ref{sectmcm} below, 
\begin{equation}
C(A,t)-\int_{A\times[0,t]}\lambda(u,\eta_{s})\nu(du)ds\label{MGPC}
\end{equation}
is a local martingale, $N$ is independent of $C$, and is a Poisson
random measure with mean measure $\nu_{0}$, that is, under $P$,
$(N,C)$ has the distribution of the solution of (\ref{pert}).

\subsubsection{Filtering equations}

Observing that 
\begin{eqnarray*}
\varphi(\eta_{t})L(t) & = & \varphi(\eta_{0})+\int_{E\times[0,t]}\left(\varphi(\eta_{s-}+\delta_{(u,s)})\frac{\lambda(u,\eta_{s-})}{\lambda(u)}-\varphi(\eta_{s-})\right)L(s-)C(du\times ds)\\
 &  & \qquad-\int_{E\times[0,t]}\varphi(\eta_{s})\left(\lambda(u,\eta_{s})-\lambda(u)\right)L(s)\nu(du)ds\\
 & = & \varphi(\eta_{0})+\int_{E\times[0,t]}\left(\varphi(\eta_{s-}+\delta_{(u,s)})\frac{\lambda(u,\eta_{s-})}{\lambda(u)}-\varphi(\eta_{s-})\right)L(s-)\theta(u,s)O(du\times ds)\\
 &  & \qquad-\int_{E\times[0,t]}\varphi(\eta_{s})\left(\lambda(u,\eta_{s})-\lambda(u)\right)L(s)\nu(du)ds,
\end{eqnarray*}
and averaging over independent choices of the $\theta(u,s)$ with
$\eta$ given by (\ref{etatht}), we have

\begin{theorem} The unnormalized conditional distributions satisfies
\begin{eqnarray*}
 &  & \rho_{t}(\varphi)=\rho_{0}(\varphi)-\int_{E\times[0,t]}\rho_{s}\left(\varphi(\cdot)\left(\lambda(u,\cdot)-\lambda(u)\right)\right)\nu(du)ds\\
 &  & \qquad+\int_{E\times[0,t]}\rho_{s-}(\varphi\left(\cdot+\delta_{(u,s)}\right)\frac{\lambda(u,\cdot)}{\lambda(u)}-\varphi(\cdot),)\frac{\lambda(u)}{\lambda(u)+\gamma(u)}O(du\times ds)
\end{eqnarray*}
and 
\begin{eqnarray*}
 &  & \pi_{t}\varphi=\pi_{0}\varphi\\
 &  & \qquad+\int_{E\times[0,t]}\frac{\pi_{s-}\left(\varphi\left(\cdot+\delta_{(u,s)}\right)\lambda(u,\cdot)\right)-\pi_{s-}\lambda(u,\cdot)\pi_{s-}\varphi}{\pi_{s-}\lambda(u,\cdot)+\gamma(u)}O(du\times ds)\\
 &  & \qquad-\int_{E\times[0,t]}\left(\pi_{s}\left(\varphi(\cdot)\lambda(u,\cdot)\right)-\pi_{s}\varphi\pi_{s}\lambda(u,\cdot)\right)\nu(du)ds
\end{eqnarray*}
\end{theorem}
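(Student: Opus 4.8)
The plan is to reproduce, in the point-process setting, the averaging scheme used in the two previous theorems, the new ingredient being that the hidden quantity is the family of labels $\theta$. On $(\Omega,{\cal F},Q)$, conditionally on the observation $O$, I would take $\{\theta_k(u,s):(u,s)\in O\}$, $k\ge1$, to be independent copies of $\{\theta(u,s)\}$, each with $Q\{\theta_k(u,s)=1\mid O\}=\frac{\lambda_0(u)}{\lambda_0(u)+\gamma(u)}$ and independent across the points of $O$, and define the associated cluster configurations $\eta^k$ through (\ref{etatht}) and likelihoods $L_k$ through the equation defining $L$, with $\eta$ replaced by $\eta^k$. The sequence $\{(\eta^k,L_k)\}$ is exchangeable with tail $\sigma$-algebra $\sigma(O)$, so de Finetti's theorem (Appendix \ref{limpr}) yields $\rho_t(\varphi)=\mathbb{E}^{Q}[\varphi(\eta_t)L(t)\mid{\cal F}_t^{O}]=\lim_{N\to\infty}\frac1N\sum_{k=1}^N\varphi(\eta^k_t)L_k(t)$ a.s., exactly as in (\ref{mc1}). (Here $\lambda_0(u)$ is the dominating intensity, written $\lambda(u)$ in the statement.)

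Next I would write the per-particle identity, that is, the displayed evolution of $\varphi(\eta_t)L(t)$ in its $\theta$-driven form with $(\eta,\theta,L)$ replaced by $(\eta^k,\theta_k,L_k)$, average over $1\le k\le N$, and pass to the limit term by term. The left-hand side and the initial term converge to $\rho_t(\varphi)$ and $\rho_0(\varphi)$ by the representation above. The compensator term is integrated against the deterministic measure $\nu(du)\,ds$, and for each $(u,s)$ the integrand $\frac1N\sum_k\varphi(\eta^k_s)(\lambda(u,\eta^k_s)-\lambda_0(u))L_k(s)$ converges to $\rho_s(\varphi(\cdot)(\lambda(u,\cdot)-\lambda_0(u)))$ by the exchangeable convergence of Lemma \ref{exchconv}; passage to the limit under $\int\cdots\nu(du)\,ds$ is then justified by the bounds $\lambda\le\lambda_0$, $\int_E\lambda_0\,d\nu<\infty$ and the martingale moment estimates for $L$, via dominated convergence.

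The hard part will be the stochastic integral against $O$. Since $O(E,t)<\infty$, this integral is, for fixed $\omega$, a finite sum over the points $(u,s)$ of $O$ in $E\times[0,t]$, so it suffices to pass to the limit at each such point and add finitely many contributions. At a fixed point $(u,s)$ of $O$ the quantity to average is $\frac1N\sum_k g(u,\eta^k_{s-})L_k(s-)\theta_k(u,s)$ with $g(u,\eta)=\varphi(\eta+\delta_{(u,s)})\frac{\lambda(u,\eta)}{\lambda_0(u)}-\varphi(\eta)$. The crucial observation is that, conditionally on $O$, the label $\theta_k(u,s)$ is independent of the pair $(\eta^k_{s-},L_k(s-))$, because the latter are built only from the labels at points of $O$ strictly before $s$; applying de Finetti to the exchangeable sequence $\{(\eta^k,L_k,\theta_k)\}$ and factoring, the average converges to $\frac{\lambda_0(u)}{\lambda_0(u)+\gamma(u)}\,\mathbb{E}^{Q}[g(u,\eta_{s-})L(s-)\mid{\cal F}_{s-}^{O}]=\frac{\lambda_0(u)}{\lambda_0(u)+\gamma(u)}\,\rho_{s-}(g(u,\cdot))$, where I also use that $\mathbb{E}^{Q}[\,\cdot\mid\sigma(O)]$ and $\mathbb{E}^{Q}[\,\cdot\mid{\cal F}_{s-}^{O}]$ agree on this $\eta_{s-}$-measurable quantity. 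Summing over the jump points of $O$ reproduces the asserted $O(du\times ds)$ integral, the posterior weight $\frac{\lambda_0(u)}{\lambda_0(u)+\gamma(u)}$ emerging precisely as the limit of the $\theta_k$ average. This identification of the posterior weight, together with the conditional decoupling of $\theta_k(u,s)$ from the strictly-earlier data, is the step demanding the most care.

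Finally, the Kushner--Stratonovich equation follows from the change-of-variables formula for the finite-variation, pure-jump-plus-absolutely-continuous process $\rho$ applied to $\pi_t\varphi=\rho_t(\varphi)/\rho_t(1)$; no quadratic-variation correction appears because $\rho$ carries no continuous martingale part. On the absolutely continuous part the ordinary quotient rule gives $-\int_E(\pi_s(\varphi\lambda(u,\cdot))-\pi_s\varphi\,\pi_s\lambda(u,\cdot))\nu(du)\,ds$, the $\lambda_0$-contributions cancelling. At a jump of $O$ at $(u,s)$ the increment of $\rho$ computed above yields $\rho_s(\varphi)=\frac{1}{\lambda_0(u)+\gamma(u)}\big(\gamma(u)\rho_{s-}(\varphi)+\rho_{s-}(\varphi(\cdot+\delta_{(u,s)})\lambda(u,\cdot))\big)$, whence, dividing by the same expression with $\varphi\equiv1$, one obtains $\pi_s\varphi=\frac{\gamma(u)\pi_{s-}\varphi+\pi_{s-}(\varphi(\cdot+\delta_{(u,s)})\lambda(u,\cdot))}{\gamma(u)+\pi_{s-}\lambda(u,\cdot)}$, so that $\pi_s\varphi-\pi_{s-}\varphi$ equals the stated integrand against $O(du\times ds)$.
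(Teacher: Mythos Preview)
Your proposal is correct and follows essentially the same approach as the paper: the paper's argument consists of the single sentence ``averaging over independent choices of the $\theta(u,s)$ with $\eta$ given by (\ref{etatht})'' applied to the displayed evolution of $\varphi(\eta_t)L(t)$, and you have fleshed this out, in particular supplying the conditional-independence argument that isolates the posterior weight $\frac{\lambda_0(u)}{\lambda_0(u)+\gamma(u)}$ at each jump point and the explicit jump algebra for $\pi_t\varphi=\rho_t(\varphi)/\rho_t(1)$, neither of which the paper writes out.
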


\begin{remark}In most settings, the difficulty of computing the distribution
of $2^{O(E,t)}$ possible states would be prohibitive. The compromise
in \citep{Wu09} is to structure the model in such a way that it is
possible to compute $\pi_{t}\varphi=\mathbb{E}^{P}\left[\varphi(\eta_{s})|{\cal F}_{s}\right]$
for a ``small'' collection of $\varphi$.

Suppose one observes $u_{i}$ at time $\tau_{i}$ and $y_{i}=(u_{i},\tau_{i})$.
Let 
\[
\theta(y_{i})(\cdot)={\bf 1}_{\{y_{i}\mbox{{\rm \ is a point in the cluster}}\}}
\]
and 
\[
\theta_{0}(y_{i})(\cdot)={\bf 1}_{\{y_{i}\mbox{{\rm \ is the latest point in the cluster}}\}}.
\]
One needs to be able to evaluate 
\[
\pi_{t}\lambda(u,\cdot)
\]
which is accomplished under a Markov scenario.

Consider 
\[
\lambda(u,\eta_{t})=\sum_{i=1}^{O(E,t)}\lambda(u,y_{i})\theta_{0}(y_{i})+\epsilon(u),
\]
Then the goal is to obtain a closed system for 
\[
\pi_{t}\theta_{0}(y_{i}),\quad\pi_{t}\theta(y_{i}),\quad\pi_{t}\theta(y_{i})\theta_{0}(y_{j}).
\]
See Section 1.3 of \citep{Wu07}. \end{remark}

\section{Convergence of first order discretizations}

\label{subsec: diffusion in GWN} 

The solution of the stochastic filtering problem depends on both the
signal and on the observation process. However the manner in which
it depends on both ingredients is different. Let us take, as an example,
the framework described in Section 2.1. By Kallianpur-Striebel's formula,
we have that 
\[
\pi_{t}(\varphi)=\mathbb{E}^{P}\left[\varphi(X(t))|{\cal F}_{t}^{Y}\right]=\frac{\mathbb{E}^{Q}\left[\varphi(X(t))L(t)|{\cal F}_{t}^{Y}\right]}{\mathbb{E}^{Q}\left[L(t)|{\cal F}_{t}^{Y}\right]},
\]
where $Q$ is the reference measure. Under $Q$, $X$ and $Y$ are
independent and 
\begin{equation}
L(t)=\exp\left\{ \int_{0}^{t}h^{T}(X(s))dY(s)-\frac{1}{2}\int_{0}^{t}\left|h\right|^{2}(X(s))ds\right\} .\label{lt}
\end{equation}
Because of the conditioning with respect to the observation $\sigma$-algebra,
the observation path can be assumed to be fixed to a particular realisation
(the one that is actually observed). Although this fact is not immediately
clear from the expression appearing in \eqref{lt}, a simple integration
by parts of the stochastic integral in \eqref{lt} can justify the
observation path dependence of $L_{t}$, see e.g. \citep{ClaCri05,crisan2021pathwise}
for further details. The signal process enters into the solution of
the filtering problem through its law. The pathwise behavior of the
signal plays no role; it is just its law that is needed to compute
$\pi_{t}(\varphi)$. Any numerical resolution of the solution of the
filtering problem involves a discretization of the observation path
as well as the approximation of the law of the signal. The error in
the numerical approximations of $\pi_{t}(\varphi)$ will then depend
on the observation path discretization error as well as the error
due to the approximation of the signal. In the following, we will
analyze the error due to the observation path discretization by exploiting
the particle representation of the various quantities involved. In
this section we show, under very general conditions, that the discretization
error tends to $0$ as the time discretization mesh converges to $0$,
and in the next section we compute the order of convergence as well
as the leading error coefficient.

Typically, the observation data is recorded at discrete times, and
only these data are made available and used. For example, if the set
of data $\{Y_{\frac{k}{n}},k\ge0\}$ is available, we can use the
approximation 
\begin{equation}
\pi_{t}^{n}(\varphi)=\frac{\mathbb{E}^{Q}\left[\varphi(X(t))L^{n}(t)|{\cal F}_{t}^{Y}\right]}{\mathbb{E}^{Q}\left[L^{n}(t)|{\cal F}_{t}^{Y}\right]},\ \ \label{pin}
\end{equation}
where for $t=k/n$, 
\begin{equation}
L^{n}(k/n)=\exp\left(\sum_{i=0}^{k-1}\left(h^{T}\left(X(i/n)\right)\left(Y\left((i+1)/n\right)-Y\left(i/n\right)\right)-\frac{1}{2n}|h|^{2}((X(i/n))\right)\right).\label{ltn}
\end{equation}
More generally, we can embed the above approximation into the usual
continuous time version. Let $\tau_{n}\left(s\right)=j\frac{1}{n}$
for $s\in\left[j\frac{1}{n},\left(j+1\right)\frac{1}{n}\right)$,
and let $L^{n}(t)$ be given by 
\begin{equation}
L^{n}(t)=\exp\left(\int_{0}^{t}h^{T}(X(\tau_{n}\left(s\right)))dY(s)-\frac{1}{2}\int_{0}^{t}|h|^{2}(X(\tau_{n}\left(s\right)))ds\right)\label{ltn2}
\end{equation}
and then re-write \eqref{pin} as 
\begin{equation}
\pi_{t}^{n}(\varphi)=\frac{\rho_{t}^{n}(\varphi)}{\rho_{t}^{n}(1)},\ \ \label{pin2}
\end{equation}
where $\rho_{t}^{n}$ is the Picard approximation with time step $1/n$
for the unnormalized conditional distribution $\rho$ 
\[
\rho_{t}^{n}\left(\varphi\right)\triangleq E^{Q}\left[\left.\varphi\left(X\left(t\right)\right)L^{n}\left(t\right)\right|{\cal F}_{t}^{Y}\right].
\]

\begin{proposition}\label{Prop: PicardParticleRepresentation1} For
all $\varphi\in C_{P}\left(\mathbb{R}^{d_{X}}\right)$, we have that
\[
\lim_{n\rightarrow\infty}\rho_{t}^{n}\left(\varphi\right)=\rho_{t}\left(\varphi\right).
\]
Moreover $\lim_{n\rightarrow\infty}\rho^{n}=\rho$ and $\lim_{n\rightarrow\infty}\pi^{n}=\pi$
as measure valued processes. \end{proposition}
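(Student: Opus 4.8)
The plan is to prove the pointwise statement first and then bootstrap it to the process-level convergence. For fixed $t$ and $\varphi$, the crux is that $L^{n}(t)\to L(t)$ and that the family $\{\varphi(X(t))L^{n}(t)\}_{n}$ is uniformly integrable under $Q$; conditional expectation then does the rest. First I would show $L^{n}(t)\to L(t)$ in probability by comparing the exponents in \eqref{ltn2} and \eqref{lt}: the Lebesgue term converges because $X$ has continuous paths and $h$ is bounded and continuous, so $|h|^{2}(X(\tau_{n}(s)))\to|h|^{2}(X(s))$ boundedly, while the Itô integral converges in $L^{2}(Q)$ by the isometry, since under $Q$ the integrand $h(X(\tau_{n}(\cdot)))$ is adapted and $\mathbb{E}^{Q}\int_{0}^{t}|h(X(\tau_{n}(s)))-h(X(s))|^{2}ds\to0$ by bounded convergence. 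Continuity of $\exp$ then transfers this to $L^{n}(t)\to L(t)$.

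Next I would establish uniform-in-$n$ moment bounds $\mathbb{E}^{Q}[\sup_{s\le t}L^{n}(s)^{m}]\le e^{c_{m}t}$, exactly as in the Lemma yielding \eqref{momest2}: writing $L^{n}$ as an exponential martingale times a bounded correction (using $\|h\|_{\infty}<\infty$) makes the bound independent of $n$, because the freezing $\tau_{n}$ does not change the supremum norm of $h$. Combined with the moment estimate \eqref{momest1} for $X$ and Hölder's inequality, this shows $\{\varphi(X(t))L^{n}(t)\}_{n}$ is bounded in $L^{2}(Q)$ for $\varphi\in C_{P}(\mathbb{R}^{d_{X}})$, hence uniformly integrable. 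Convergence in probability plus uniform integrability gives $\varphi(X(t))L^{n}(t)\to\varphi(X(t))L(t)$ in $L^{1}(Q)$, and since $\mathbb{E}^{Q}[\,\cdot\,|\mathcal{F}_{t}^{Y}]$ is an $L^{1}$-contraction, $\rho_{t}^{n}(\varphi)\to\rho_{t}(\varphi)$ in $L^{1}(Q)$, which proves the first claim.

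For convergence as measure-valued processes I would upgrade the pointwise estimate to a uniform-in-time one. The weight difference $\Delta^{n}=L^{n}-L$ solves the linear equation $d\Delta^{n}=\Delta^{n}h(X)^{T}dY+L^{n}(h(X(\tau_{n}))-h(X))^{T}dY$, so Doob/Burkholder--Davis--Gundy together with Gronwall's lemma yield $\mathbb{E}^{Q}[\sup_{t\le T}|L^{n}(t)-L(t)|^{2}]\to0$, the forcing term vanishing by bounded convergence and the uniform moment bounds. To transfer this to $\rho^{n}$ I would average the particle identity for $\varphi(X_{k}(t))L_{k}^{n}(t)$ as in the derivation of \eqref{mod1zak} to obtain the Zakai-type representation $\rho_{t}^{n}(\varphi)=\rho_{0}(\varphi)+\int_{0}^{t}\rho_{s}^{n}(A\varphi)ds+\int_{0}^{t}\mathbb{E}^{Q}[\varphi(X(s))h(X(\tau_{n}(s)))L^{n}(s)|\mathcal{F}_{s}^{Y}]^{T}dY(s)$, subtract \eqref{mod1zak}, and run a second BDG/Gronwall estimate on $\sup_{t\le T}|\rho_{t}^{n}(\varphi)-\rho_{t}(\varphi)|$ in which every error term is dominated by $\mathbb{E}^{Q}[\sup_{t\le T}|L^{n}-L|^{2}]$ and by the frozen-drift error, both tending to $0$. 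Running this for a countable convergence-determining family $\{\varphi_{j}\}$ and using the uniform mass bound $\sup_{n}\sup_{t\le T}\rho_{t}^{n}(1)<\infty$ for tightness gives $\rho^{n}\to\rho$, and $\pi^{n}=\rho^{n}/\rho^{n}(1)\to\rho/\rho(1)=\pi$ by the continuous mapping theorem since $\rho_{t}(1)>0$.

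The hard part will be the process-level statement, and specifically the fact that the quantity we can control pathwise, $L^{n}-L$, lives inside the conditional expectation $\mathbb{E}^{Q}[\,\cdot\,|\mathcal{F}_{t}^{Y}]$ whose conditioning $\sigma$-algebra changes with $t$; this blocks a direct maximal inequality in $t$ at the level of $\rho^{n}(\varphi)$. Recasting $\rho^{n}(\varphi)$ and $\rho(\varphi)$ as $Y$-semimartingales through the Zakai-type equations is what restores a maximal inequality (via BDG) and lets Gronwall close the loop. The remaining technical nuisances are keeping the moment and uniform-integrability bookkeeping uniform in $n$ for polynomially growing $\varphi$, so that $A\varphi$ and $\varphi h$ stay controlled, and handling the frozen integrand $h(X(\tau_{n}(s)))$, which prevents the stochastic term from collapsing to a clean $\rho_{s}^{n}(\varphi h)$.
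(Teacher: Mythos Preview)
Your argument is essentially correct, but it takes a different route from the paper. Both proofs share the key estimate $\mathbb{E}^{Q}\bigl[\sup_{s\le t}|L^{n}(s)-L(s)|^{2}\bigr]\to 0$, which you obtain via BDG/Gronwall on the linear SDE for $L^{n}-L$ and the paper obtains by citing the inequality $|e^{y}-e^{x}|\le\tfrac12(e^{x}+e^{y})|x-y|$ together with moment bounds from \cite{CriO-L20}.

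Where the two proofs diverge is in how that estimate is converted into convergence of $\rho^{n}$. The paper works entirely through the exchangeable-particle machinery that is the theme of the paper: it sets $Z^{n}=\bigl((X_{1},L_{1}^{n}),(X_{2},L_{2}^{n}),\ldots\bigr)$, observes that $(X_{k},L_{k}^{n})\to(X_{k},L_{k})$ in probability in $C_{\mathbb{R}^{d_X}\times[0,\infty)}[0,\infty)$, and then invokes Lemma~\ref{exlimproc}(b) to interchange the limits $n\to\infty$ and $N\to\infty$, obtaining $V^{n}\to V$ in $C_{\mathcal P(\mathbb{R}^{d_X}\times[0,\infty))}[0,\infty)$. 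Lemma~\ref{imgcnv}, together with the uniform moment bounds you also identify, then upgrades this to convergence of $\rho^{n}(\varphi)$ for $\varphi\in C_{P}$; the pointwise statement and the process-level statement come out simultaneously. Your proof instead separates the two: for the pointwise statement you bypass particles entirely and argue by $L^{1}$-contraction of the conditional expectation, while for the process-level statement you derive a Zakai-type equation for $\rho^{n}$ (which does use the particle averaging, as you note) and run a BDG estimate on the difference. This is more elementary in that it avoids the abstract limit-interchange Lemma~\ref{exlimproc}, but it costs you some regularity---the Zakai step needs $A\varphi$ to make sense, so you are working with $\varphi\in C_{P}^{2}$ at that stage and then passing to general $\varphi\in C_{P}$ via a convergence-determining family and tightness, whereas the paper's route handles all $\varphi\in C_{P}$ uniformly through Lemma~\ref{imgcnv}. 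A minor remark: your second ``Gronwall'' is not really a Gronwall loop, since after subtracting the two Zakai equations every integrand on the right is already dominated pointwise in $s$ by quantities controlled by $L^{n}-L$ and the frozen-$h$ error; no self-referential term survives.
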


\begin{proof}\ As announced, we use the particle representation
for the respective measure valued processes $\rho^{n}$, $\rho$,
$\pi^{n}$ and $\pi$. Let $X_{1},X_{2},\ldots$ be i.i.d. copies
of $X$ that are independent of $Y$ under $Q$, and let $L_{k}^{n}$
be the exponentials corresponding to $X_{1},X_{2},\ldots$ defined
as above, i.e., 
\begin{eqnarray*}
L_{k}^{n}(t) & = & \exp\left(\int_{0}^{t}h^{T}(X_{k}(\tau_{n}\left(s\right)))dY(s)-\frac{1}{2}\int_{0}^{t}|h|^{2}(X_{k}(\tau_{n}\left(s\right)))ds\right)\\
L_{k}(t) & = & \exp\left(\int_{0}^{t}h^{T}(X_{k}(s))dY(s)-\frac{1}{2}\int_{0}^{t}|h|^{2}(X_{k}(s))ds\right).
\end{eqnarray*}
Then, the particle representations give 
\[
\rho_{t}^{n}\left(\varphi\right)=\lim_{N\rightarrow\infty}\frac{1}{N}\sum_{k=1}^{N}\varphi\left(X_{k}\left(t\right)\right)L_{k}^{n}\left(t\right)
\]
and 
\[
\rho_{t}\left(\varphi\right)=\lim_{N\rightarrow\infty}\frac{1}{N}\sum_{k=1}^{N}\varphi\left(X_{k}\left(t\right)\right)L_{k}\left(t\right),
\]
where $\varphi$ is a continuous function with at most polynomial
growth, and convergence is assured by the exchangeability of $\{(L_{k}^{n},X_{k})\}$
and $\{(L_{k}^{n},X_{k})\}$.

We apply Lemma \ref{exlimproc} with $Z^{n}=\left(\left(X_{1},L_{1}^{n}\right),\left(X_{2},L_{2}^{n}\right),...\right).$
Note that, in this example, $N_{n}=\infty$. Thanks to Remark \ref{cEinf}
it suffices to check the convergence of $Z^{n}$ to $Z$ in $C_{\left(\mathbb{R}^{d_{X}}\mathbf{\textnormal{\ensuremath{\times\textnormal{ [0}},\ensuremath{\infty})}}\right)^{\infty}}\textnormal{[0},\infty)$
in probability. For fixed $k$, one has that 
\[
\mathbb{E}^{Q}\left[\sup_{0\leq s\leq t}\left|L_{k}^{n}-L_{k}\right|^{2}\right]\rightarrow_{n\rightarrow\infty}0.
\]
 for all $t\geq0$. This follows from the inequality $\left|e^{y}-e^{x}\right|\leq\frac{\left(e^{x}+e^{y}\right)}{2}\left|x-y\right|$
combined with similar estimates as those in Lemmas 3.6 and 3.9 in
\citep{CriO-L20}. Therefore, $\left(X_{k},L_{k}^{n}\right)$ converges
in probability to $\left(X_{k},L_{k}\right)$ when $n$ tends to infinity.
Lemma \ref{exlimproc} b) yields that 
\[
\lim_{n\rightarrow\infty}V^{n}=\lim_{n\rightarrow\infty}\lim_{N\rightarrow\infty}\frac{1}{N}\sum_{k=1}^{N}\delta_{\left(X_{k},L_{k}^{n}\right)}=\lim_{N\rightarrow\infty}\lim_{n\rightarrow\infty}\frac{1}{N}\sum_{k=1}^{N}\delta_{\left(X_{k},L_{k}\right)}=V,
\]
in $C_{\mathcal{P}\left(\mathbb{R}^{d_{X}}\mathbf{\textnormal{\ensuremath{\times\textnormal{[0}},\ensuremath{\infty})}}\right)}\textnormal{ [0},\infty)$.

The moment estimates on $X_{k}$, $L_{k}^{n}$, and $L_{k}$ ensure
that for each $\psi\in C_{P}\left(\mathbb{R}^{d_{X}}\right)$ and
$m>0$ and each $T>0$, 
\[
\sup_{n}\mathbb{E}[\sup_{t\leq T}\psi(X_{k}(t))(L_{k}^{n}(t)^{m}+1)]=\sup_{n}\mathbb{E}[\sup_{t\leq T}\int_{\mathbb{R}^{d_{X}}\times[0,\infty)}\psi(x)(a^{m}+1)V_{n}(dx\times da,t)<\infty.
\]
Then keeping in mind that 
\[
\rho_{t}^{n}(\varphi)=\int_{\mathbb{R}^{d_{X}}\times[0,\infty)}a\varphi(x)V^{n}(dx\times da,t)
\]
and 
\[
\rho_{t}(\varphi)=\int_{\mathbb{R}^{d_{X}}\times[0,\infty)}a\varphi(x)V(dx\times da,t),
\]
the result follows by applying Lemma \ref{imgcnv}. In particular,
for all $\varphi\in C_{P}\left(\mathbb{R}^{d_{X}}\right)$, we have
that 
\[
\lim_{n\rightarrow\infty}\rho_{t}^{n}\left(\varphi\right)=\rho_{t}\left(\varphi\right).
\]
with the convergence of the measure valued processes $\lim_{n\rightarrow\infty}\rho^{n}=\rho$
and $\lim_{n\rightarrow\infty}\pi^{n}=\pi$ being an immediate consequence
of the above and of the Kallianpur-Striebel formula. \end{proof}

A similar result can be obtained for the second framework (Spatial
observations with additive white noise). The above convergence result
does not give an estimate of the order of convergence. This is not
possible under the general assumptions of stated in section 2.1.2
on the functions $h,b$ and $\sigma$. However, we can do this under
more restrictive assumptions. This is the goal of the next section.

\section{Leading error coefficient for the Picard discretization}

\label{secterr}

In this section, we are using the same framework as in sections \ref{mod1}
and \ref{subsec:  diffusion in  GWN}, as well as the same notation
introduced therein. In addition, we will require that $\sigma,b,h\in C_{b}^{6}\left(\mathbb{R}^{d_{X}}\right)$.
Heuristically, the main goal of this section is to show that 
\[
\rho_{t}^{n}=\rho_{t}-\frac{1}{n}U_{t}+o\left(\frac{1}{n}\right),\qquad t\in[0,\infty),
\]
where $U$ is a process characterized as the solution of a certain stochastic evolution equation.\footnote{A similar expansion holds for $\pi_{t}$ using a straightforward application
of the Kallianpur-Striebel's formula.} The exact statement of the result is contained in Theorem \ref{th:lr}
below. The main technical tool to do this is, again, the particle
representations of the various processes involved. To be more precise,
let $C_{P}^{k}$ be the space of $k$-differentiable functions
with at most polynomial growth. Then, for $\varphi\in C_{P}^{k}$
consider the quantities

\begin{eqnarray*}
E_{t}^{k,n}\left(\varphi\right) &  & \triangleq n\varphi\left(X_{k}\left(t\right)\right)\left(L_{k}\left(t\right)-L_{k}^{n}\left(t\right)\right),\\
U_{t}^{N,n}\left(\varphi\right) &  & \triangleq\frac{1}{N}\sum_{k=1}^{N}E_{t}^{k,n}\left(\varphi\right)
\end{eqnarray*}
and note that, combining the results in the previous sections, we
get 
\begin{eqnarray*}
\lim_{N\rightarrow\infty}U_{t}^{N,n}\left(\varphi\right) & = & \mathbb{E}^{Q}\left[\left.\varphi\left(X\left(t\right)\right)n\left(L\left(t\right)-L^{n}\left(t\right)\right)\right|\mathcal{F}_{t}^{Y}\right]\\
 & = & n\left(\rho_{t}\left(\varphi\right)-\rho_{t}^{n}\left(\varphi\right)\right)\triangleq U_{t}^{n}\left(\varphi\right).
\end{eqnarray*}
The goal is to find an evolution equation for the limit of $U^{n}$
when $n$ tends to infinity. This is attained in Theorem \ref{th:lr}.
Let us introduce first two preliminary results. The first is related
to a martingale process that will converge to a Brownian motion as
$n$ tends to infinity. The second is the evolution equation for the
process $U^{n}$.

Consider the sequence of processes $R^{n}=\left\{ R_{t}^{n}\right\} _{t\geq0},n\in\mathbb{N},$
defined by 
\begin{equation}
R^{n}\left(t\right)\triangleq2\sqrt{3}\int_{0}^{t}\left(n\left(s-\tau_{n}\left(s\right)\right)-\frac{1}{2}\right)dY\left(s\right).\label{eq:defRn}
\end{equation}

\begin{lemma} \label{independentBM}$\left(Y,R^{n}\right)$ converges
in distribution to $\left(Y,\mathcal{R}\right)$, where $\mathcal{R}$
is a $d_{Y}$-dimensional standard Brownian motion independent of
$Y$ and the $B_{k}$. \end{lemma}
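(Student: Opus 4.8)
The key observation is that the integrand in \eqref{eq:defRn} is deterministic. Write $\psi_{n}(s)=n(s-\tau_{n}(s))-\frac{1}{2}$, a deterministic sawtooth function with $|\psi_{n}|\leq\frac{1}{2}$, so that each component $R_{i}^{n}(t)=2\sqrt{3}\int_{0}^{t}\psi_{n}(s)\,dY_{i}(s)$ is a Wiener integral. Consequently, for every fixed $m$, the vector process $(Y,B_{1},\dots,B_{m},R^{n})$ is centered and jointly Gaussian under $Q$, and the whole statement reduces to the convergence of its covariance function to that of $(Y,B_{1},\dots,B_{m},\mathcal{R})$, where $\mathcal{R}$ is a standard $d_{Y}$-dimensional Brownian motion independent of $Y$ and the $B_{k}$; for Gaussian families, convergence of covariances is equivalent to convergence of the finite-dimensional distributions. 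The plan is to include the $B_{k}$ in the vector precisely so that the asserted independence of $\mathcal{R}$ from the $B_{k}$ is captured in the limit, letting $m$ be arbitrary at the end.

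The covariance computation rests on two elementary identities on a single mesh interval: $\int_{j/n}^{(j+1)/n}\psi_{n}(s)\,ds=\frac{1}{n}\int_{0}^{1}(u-\frac{1}{2})\,du=0$ and $\int_{j/n}^{(j+1)/n}\psi_{n}(s)^{2}\,ds=\frac{1}{n}\int_{0}^{1}(u-\frac{1}{2})^{2}\,du=\frac{1}{12n}$. From the first, $\mathrm{Cov}(R_{i}^{n}(s),Y_{j}(t))=2\sqrt{3}\,\delta_{ij}\int_{0}^{s\wedge t}\psi_{n}(r)\,dr$ vanishes over complete intervals and is bounded by $\frac{\sqrt{3}}{n}$, hence tends to $0$. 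From the second, $\mathrm{Cov}(R_{i}^{n}(s),R_{j}^{n}(t))=12\,\delta_{ij}\int_{0}^{s\wedge t}\psi_{n}(r)^{2}\,dr\to\delta_{ij}(s\wedge t)$, which is exactly the covariance of a standard $d_{Y}$-dimensional Brownian motion; this is the reason for the normalization $2\sqrt{3}=\sqrt{12}$. Finally $\mathrm{Cov}(R_{i}^{n}(s),(B_{k})_{j}(t))=0$ for every $n$, since $Y$ and the $B_{k}$ are independent under $Q$, while the covariances among $Y$ and the $B_{k}$ do not depend on $n$. Thus the limiting covariance matrix is block diagonal, which simultaneously identifies $\mathcal{R}$ as a standard Brownian motion and forces its independence from $Y$ and all the $B_{k}$.

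To promote the convergence of finite-dimensional distributions to convergence in distribution on path space, I would verify tightness of $\{R^{n}\}$. Since $|\psi_{n}|\leq\frac{1}{2}$, the bracket increments satisfy $[R_{i}^{n}]_{t}-[R_{i}^{n}]_{s}=12\int_{s}^{t}\psi_{n}^{2}\,dr\leq3(t-s)$ uniformly in $n$, so the Burkholder-Davis-Gundy inequality gives $\mathbb{E}^{Q}[|R^{n}(t)-R^{n}(s)|^{4}]\leq C(t-s)^{2}$ with $C$ independent of $n$. Kolmogorov's criterion then yields tightness in $C_{\mathbb{R}^{d_{Y}}}[0,\infty)$; tightness of the $n$-independent coordinates $Y,B_{1},\dots,B_{m}$ is automatic, so the joint family is tight. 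Combining tightness with the f.d.d.\ convergence above (via Prokhorov's theorem) gives $(Y,B_{1},\dots,B_{m},R^{n})\Rightarrow(Y,B_{1},\dots,B_{m},\mathcal{R})$, and in particular the claimed $(Y,R^{n})\Rightarrow(Y,\mathcal{R})$.

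I expect the only genuinely delicate point to be the asymptotic decoupling of $R^{n}$ from $Y$: although $R^{n}$ is built entirely out of $Y$, its limit is independent of $Y$, and this is forced by the exact cancellation $\int_{j/n}^{(j+1)/n}\psi_{n}=0$ over complete mesh intervals. Everything else---the Gaussianity, the variance normalization, and the tightness bound---is routine once the integrand is recognized as deterministic and uniformly bounded. An alternative to the Gaussian argument would be the functional martingale central limit theorem applied to the continuous martingale $(Y,B_{1},\dots,B_{m},R^{n})$, whose bracket converges as computed above to that of a family of independent Brownian motions; I prefer the Gaussian route here because the deterministic integrand makes it entirely self-contained.
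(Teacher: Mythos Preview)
Your argument is correct, but it takes a different route from the paper. The paper observes that $(Y,R^{n})$ is a continuous $2d_{Y}$-dimensional martingale, computes the bracket matrix $[Y^{i},Y^{j}]$, $[R^{n,i},R^{n,j}]$, $[Y^{i},R^{n,j}]$ (obtaining exactly the integrals you evaluate), and then invokes the functional martingale central limit theorem (Theorem~1.4, Chapter~7 of \citep{EK86}) to conclude. You instead exploit the fact that $\psi_{n}$ is deterministic, so the joint process $(Y,B_{1},\dots,B_{m},R^{n})$ is Gaussian for every $n$; this lets you bypass the martingale CLT entirely and reduce everything to convergence of covariances plus a separate Kolmogorov tightness estimate. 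Your approach is more elementary and self-contained, and has the added benefit that by carrying the $B_{k}$ along explicitly you actually verify the asserted independence of $\mathcal{R}$ from the $B_{k}$, which the paper's short proof does not spell out. The paper's route is terser once one is willing to cite the martingale CLT, and it would generalize immediately if the integrand were merely adapted rather than deterministic; you correctly note this alternative at the end.
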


\begin{proof} The process $\left(Y,R^{n}\right)$ is a $2d_{Y}$-dimensional
martingale as $R^{n}$ is a stochastic integral with respect to the
Brownian motion $Y$. Moreover, observe that 
\begin{eqnarray}
\left[Y^{i},Y^{j}\right]_{t} & = & [R^{n,i},R^{n,j}]_{t}=[Y^{i},R^{n,j}]_{t}=0,\qquad i\neq j,\nonumber \\
\left[R^{n,i}\right]_{t} & = & 12\int_{0}^{t}\left(n\left(s-\tau_{n}\left(s\right)\right)-\frac{1}{2}\right)^{2}ds=t+\mathcal{O}\left(n^{-1}\right),\label{eq:QuadVarRn}\\
\left[Y^{i},(R^{n})^{i}\right]_{t} & = & 2\sqrt{3}\int_{0}^{t}\left(n\left(s-\tau_{n}\left(s\right)\right)-\frac{1}{2}\right)ds=\mathcal{O}\left(n^{-1}\right).\nonumber 
\end{eqnarray}
The result follows by the martingale central limit theorem, for example,
Theorem 1.4, Chapter 7 in \citep{EK86}. \end{proof}

Let $\left\{ S_{n}\right\} _{n\geq1}$ be a sequence of real-valued
random processes. In what follows we will use the notation $S_{n}=\mathcal{O}\left(n^{-p}\right)$
for some $p\geq0$ to indicate that 
\[
\mathbb{E}^{Q}\left[\sup_{0\leq s\leq t}\left|S_{n}\left(s\right)\right|^{2}\right]^{1/2}\leq\frac{C\left(t\right)}{n^{p}},
\ \ t\ge 0,
\]
for some positive constant $C\left(t\right).$

\begin{proposition}\label{Pr:Un} For each $\varphi\in C_{P}^{4}$
and $n\in\mathbb{N}$, the process $U^{n}$ satisfies, the following
approximate evolution equation: 
\begin{eqnarray}
U_{t}^{n}\left(\varphi\right) & = & \int_{0}^{t}U_{s}^{n}\left(A\varphi\right)ds+\int_{0}^{t}U_{s}^{n}\left(\varphi h^{T}\right)dY\left(s\right)+\frac{1}{2}\int_{0}^{t}\rho_{s}^{n}\left(\varphi Ah^{T}\right)dY\left(s\right)\label{eqUn}\\
 &  & +\frac{1}{2\sqrt{3}}\int_{0}^{t}\rho_{s}^{n}\left(\varphi Ah^{T}\right)dR^{n}\left(s\right)+\frac{1}{2\sqrt{3}}\sum_{i=1}^{d_{Y}}\int_{0}^{t}\rho_{\tau_{n}\left(s\right)}^{n}\left(\mathrm{tr}\left(\widetilde{O}_{\sigma,\varphi,h_{i}}\right)\right)dR^{n,i}\left(s\right)\nonumber \\
 &  & +\frac{1}{2}\sum_{i=1}^{d_{Y}}\int_{0}^{t}\rho_{\tau_{n}\left(s\right)}^{n}\left(\mathrm{tr}\left(\widetilde{O}_{\sigma,\varphi,h_{i}}\right)\right)dY^{i}\left(s\right)+\Gamma_{t}^{n}(\varphi),\nonumber 
\end{eqnarray}
where $\Gamma^{n}(\varphi)$ is a process satisfying
$\Gamma^{n}(\varphi)=\mathcal{O}\left(n^{-1/2}\right)$ for all $\varphi\in C_{P}^{3}$,
and 
\[
\widetilde{O}_{\sigma,\varphi,h_{i}}\left(x\right)=\frac{1}{2}\left(O_{\sigma,\varphi,h_{i}}\left(x\right)+O_{\sigma,\varphi,h_{i}}^{T}\left(x\right)\right),
\]
with 
\[
O_{\sigma,\varphi,h_{i}}\left(x\right)=\sigma^{T}\left(x\right)\nabla\varphi\left(x\right)\nabla h_{i}^{T}\left(x\right)\sigma\left(x\right).
\]

\end{proposition}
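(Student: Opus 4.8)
The plan is to work entirely at the level of the particles and then average, exactly as in the derivation of the Zakai equation \eqref{mod1zak}. Fix $\varphi\in C_P^4$. Itô's formula gives \eqref{geq1} for $\varphi(X_k)L_k$, while for the discretized weight, since $L_k^n$ solves $dL_k^n(t)=L_k^n(t)h^T(X_k(\tau_n(t)))dY(t)$ and $B_k$ is independent of $Y$ (so there is no cross-variation between $\varphi(X_k)$ and $L_k^n$),
\[
\varphi(X_k(t))L_k^n(t)=\varphi(X_k(0))+\int_0^t L_k^n\nabla\varphi^T\sigma(X_k)dB_k+\int_0^t L_k^n A\varphi(X_k)ds+\int_0^t \varphi(X_k)L_k^n h^T(X_k(\tau_n(s)))dY(s).
\]
Subtracting this from \eqref{geq1}, multiplying by $n$, and recalling $E_t^{k,n}(\psi)=n\psi(X_k(t))(L_k(t)-L_k^n(t))$, I obtain an exact identity for $E_t^{k,n}(\varphi)$ whose drift is $\int_0^t E_s^{k,n}(A\varphi)ds$, whose $B_k$-martingale part has integrand $n(L_k-L_k^n)\nabla\varphi^T\sigma(X_k)$, and whose $dY$-part has integrand $n\varphi(X_k)\big(L_k h^T(X_k)-L_k^n h^T(X_k(\tau_n))\big)$.

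Next I average $\tfrac1N\sum_{k=1}^N$ and let $N\to\infty$. The $B_k$-martingale term is mean zero with quadratic variation $\tfrac1{N^2}\sum_k\int_0^t n^2(L_k-L_k^n)^2|\nabla\varphi^T\sigma(X_k)|^2ds$; using the first-order estimate $\mathbb E^Q[\sup_{s\le t}|n(L_k-L_k^n)|^2]\le C(t)$ (the sharpened form of the bound used in Proposition \ref{Prop: PicardParticleRepresentation1}, obtained from Lemmas 3.6 and 3.9 of \citep{CriO-L20}), it is $\mathcal O(N^{-1})$ and vanishes by Doob's inequality. The drift converges to $\int_0^t U_s^n(A\varphi)ds$ by exchangeability. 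For the $dY$-part I split $n\big(L_kh^T(X_k)-L_k^n h^T(X_k(\tau_n))\big)=n(L_k-L_k^n)h^T(X_k)+nL_k^n\big(h^T(X_k)-h^T(X_k(\tau_n))\big)$; the first summand averages to $U_s^n(\varphi h^T)$, giving the second term of \eqref{eqUn}, and the whole problem reduces to identifying the limit of the discretization term $\int_0^t \tfrac1N\sum_k n\varphi(X_k)L_k^n\big(h^T(X_k(s))-h^T(X_k(\tau_n(s)))\big)dY(s)$.

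The heart of the argument is a Taylor expansion of this integrand about the grid time $\tau_n(s)$. Writing $\delta:=X_k(s)-X_k(\tau_n(s))$, whose leading martingale part is $\sigma(X_k(\tau_n))\Delta B_k$ with $\Delta B_k:=B_k(s)-B_k(\tau_n(s))$ and whose drift part is $b(X_k(\tau_n))(s-\tau_n)$, I expand each $h_i(X_k(s))-h_i(X_k(\tau_n))$ to second order and $\varphi(X_k(s))$ to first order, then take $Q$-conditional expectations given $\mathcal F^Y$, exploiting that $B_k$ is independent of $Y$ and that $L_k^n(s)$ depends on $B_k$ only through $\mathcal F_{\tau_n(s)}$. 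The purely linear term $\varphi(X_k(\tau_n))\nabla h_i^T\sigma\,\Delta B_k$ has conditional mean zero and hence is \emph{exactly} zero in the $N\to\infty$ limit at fixed $n$. Two families of terms survive. The increment-squared piece $\tfrac12\delta^T\nabla^2 h_i\,\delta$ together with the drift piece $\nabla h_i^T b\,(s-\tau_n)$ has conditional mean $A h_i(X_k(\tau_n))(s-\tau_n)$, producing $\rho_s^n(\varphi Ah^T)$; the cross term obtained by pairing the $\nabla\varphi^T\sigma\,\Delta B_k$ fluctuation of $\varphi(X_k(s))$ against the $\nabla h_i^T\sigma\,\Delta B_k$ fluctuation of $h_i$ equals $n\,\Delta B_k^T O_{\sigma,\varphi,h_i}(X_k(\tau_n))\Delta B_k\,L_k^n$, with conditional mean $\mathrm{tr}(\widetilde O_{\sigma,\varphi,h_i})(s-\tau_n)L_k^n$ (using $\mathrm{tr}\,O=\mathrm{tr}\,\widetilde O$), producing $\rho_{\tau_n(s)}^n(\mathrm{tr}(\widetilde O_{\sigma,\varphi,h_i}))$. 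In both cases the surviving integrand carries the weight $n(s-\tau_n(s))$, and the key observation is that $n(s-\tau_n(s))\,dY(s)=\tfrac12\,dY(s)+\tfrac{1}{2\sqrt3}\,dR^n(s)$, immediate from the definition \eqref{eq:defRn}; this splits each surviving integral into its $\tfrac12\,dY$ and $\tfrac1{2\sqrt3}\,dR^n$ parts and produces precisely the four explicit stochastic-integral terms of \eqref{eqUn}.

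The main obstacle is showing that everything not listed above is collected into a remainder $\Gamma^n(\varphi)=\mathcal O(n^{-1/2})$ in the stated $\sup$-$L^2$ norm: the mean-zero fluctuations of $\delta^T\nabla^2 h_i\,\delta$ and of the cross term about their conditional means, the Taylor remainders beyond second order, and the discrepancies between $L_k^n(s)$ and $L_k^n(\tau_n(s))$ and between evaluating the coefficients at $X_k(s)$ versus $X_k(\tau_n(s))$. I control these using the moment bounds \eqref{momest1}--\eqref{momest2}, the $C_b^6$ regularity of $\sigma,b,h$ (which supplies enough bounded derivatives for the expansions and their remainders), the independence of the $B_k$ from $Y$ (making the offending terms genuine $Y$-martingales with small, explicitly computable quadratic variations), and the Burkholder--Davis--Gundy and Doob inequalities for the $dY$-integrals; since each uncancelled fluctuation carries a factor $\Delta B_k=\mathcal O(n^{-1/2})$ or an uncompensated increment $(s-\tau_n)$, the BDG bounds yield the $\mathcal O(n^{-1/2})$ rate. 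Throughout, the exchangeability and de Finetti convergence established in Proposition \ref{Prop: PicardParticleRepresentation1} justify replacing each $\tfrac1N\sum_k$ by the corresponding $\rho^n$- or $U^n$-functional in the limit, and the convergence $R^n\to\mathcal R$ of Lemma \ref{independentBM} confirms that the $dR^n$-integrals are well defined and stable as $n\to\infty$.
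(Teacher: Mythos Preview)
Your proposal is correct and follows essentially the same route as the paper. Both arguments subtract \eqref{geq1} from the analogous identity for $\varphi(X_k)L_k^n$, isolate the driving term $n\int_0^t L_k^n(s)\varphi(X_k(s))\bigl(h(X_k(s))-h(X_k(\tau_n(s)))\bigr)^T dY(s)$, expand the increment of $h$ over $[\tau_n(s),s]$, pair the $\nabla\varphi^T\sigma\,\Delta B_k$ fluctuation of $\varphi(X_k)$ against $\nabla h_i^T\sigma\,\Delta B_k$ to produce the $\widetilde O_{\sigma,\varphi,h_i}$ trace, and use the identity $n(s-\tau_n(s))\,dY=\tfrac12\,dY+\tfrac{1}{2\sqrt3}\,dR^n$ to split the surviving integrals.

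The only real difference is organizational. The paper uses It\^o's formula at every step (so all identities are exact before estimation) and packages the analysis of the two delicate terms into Lemmas~\ref{lem: aux1} and~\ref{lem: aux2}, the latter explicitly enumerating thirteen remainder processes $\Psi^{n,i,j}$ and bounding each. You instead phrase the same computation as a second-order Taylor expansion in $\delta=X_k(s)-X_k(\tau_n(s))$ followed by conditional expectation given $\mathcal F^Y$, which is the same thing once one remembers that It\^o's formula \emph{is} the correct stochastic Taylor expansion. Your observation that $L_k^n(s)$ depends on $B_k$ only through $\mathcal F^{B_k}_{\tau_n(s)}$ is exactly what makes the conditional-expectation shortcut work and is implicit in the paper's treatment of the $C_{k,2}$-type terms. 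Your remainder discussion is schematic where the paper's is exhaustive, but the ingredients you list (BDG, Doob, the $C_b^6$ regularity, the moment bounds \eqref{momest1}--\eqref{momest2}) are precisely those used in the paper's Lemmas~\ref{lem: aux1}--\ref{lem: aux2}.
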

We give the proof of Proposition \ref{Pr:Un} in Section \ref{section:Pr:Un}
below. 

\begin{lemma}\label{Uest} For each $\varphi\in C_{P}^{4}$ and $t>0$,
we have that 
\[
\sup_{n\in\mathbb{N}}\mathbb{E}^{Q}\left[\sup_{0\leq s\leq t}\left|U_{s}^{n}\left(\varphi\right)\right|^{2}\right]\leq C\left(t,\varphi\right),
\]
for some positive constant $C\left(t,\varphi\right)$.

\end{lemma}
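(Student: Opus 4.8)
The plan is to read off the bound directly from the approximate evolution equation \eqref{eqUn} of Proposition \ref{Pr:Un}, separating the terms on its right-hand side into a ``source'' part that is bounded uniformly in $n$ and a ``feedback'' part that couples $U^n$ to itself and must be closed by a Gronwall argument. Throughout I would use that, by the particle representation together with the moment estimates \eqref{momest1} and \eqref{momest2}, one has $\sup_n\sup_{s\le t}\mathbb E^Q[|\rho^n_s(\psi)|^2]\le C(t,\psi)$ for every $\psi\in C_P(\mathbb R^{d_X})$ (write $\rho^n_s(\psi)=\mathbb E^Q[\psi(X(s))L^n(s)\mid{\cal F}^Y_s]$, apply conditional Jensen, and bound $\mathbb E^Q[\psi(X)^4]^{1/2}\mathbb E^Q[L^n(s)^4]^{1/2}$). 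This applies in particular to $\psi=\varphi Ah^T$ and $\psi=\mathrm{tr}(\widetilde O_{\sigma,\varphi,h_i})$, which lie in $C_P$ because $\varphi\in C_P^4$ and $\sigma,b,h\in C_b^6$.

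First I would dispose of the source terms, namely the two $\rho^n(\varphi Ah^T)$ integrals, the two $\mathrm{tr}(\widetilde O_{\sigma,\varphi,h_i})$ integrals, and $\Gamma^n(\varphi)$. For the $dY$ integrals I would apply Doob's inequality and the It\^o isometry (recall $Y$ is a $Q$-Brownian motion); for the $dR^n$ integrals I would apply the Burkholder--Davis--Gundy inequality together with the quadratic-variation computation \eqref{eq:QuadVarRn}, which gives $d[R^{n,i}]_s=12\bigl(n(s-\tau_n(s))-\tfrac12\bigr)^2\,ds\le 3\,ds$ uniformly in $n$, so that each such term is dominated by $C\int_0^t\mathbb E^Q[|\rho^n_s(\psi)|^2]\,ds$. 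Combined with the uniform $\rho^n$-moment bound and with $\Gamma^n(\varphi)=\mathcal O(n^{-1/2})$ (valid since $\varphi\in C_P^4\subset C_P^3$), this shows the source part contributes at most some $C(t,\varphi)$ independent of $n$ to $\mathbb E^Q[\sup_{s\le t}|\cdot|^2]$.

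The remaining, and main, difficulty is the feedback pair $\int_0^t U^n_s(A\varphi)\,ds+\int_0^t U^n_s(\varphi h^T)\,dY(s)$. A naive Gronwall in weak form does not close: the $ds$-term reintroduces $U^n$ evaluated at $A\varphi$, which lies only in $C_P^2$, so re-applying \eqref{eqUn} would demand ever more derivatives of $\varphi$ than the assumed four. To avoid this derivative loss I would pass to the mild (Duhamel) formulation of the signed-measure-valued process $\mu^n_t:=n(\rho_t-\rho^n_t)$, $U^n_t(\varphi)=\langle\mu^n_t,\varphi\rangle$, using the $Q$-signal semigroup $(T_s)$ generated by $A$: since $\sigma,b\in C_b^6$, $T_s$ maps $C_P^4$ into itself boundedly (differentiability of the stochastic flow), so the generator term is absorbed into $T_{t-s}$ with no loss of smoothness, leaving only the bounded operation of multiplication by $h$ (bounded on $C_P^4$ because $h\in C_b^6$). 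In the resulting equation the only self-coupling is through $\langle\mu^n_s,h^T T_{t-s}\varphi\rangle$, and a Gronwall estimate for $\mathbb E^Q[\sup_{s\le t}\|\mu^n_s\|_{(C_P^4)^\ast}^2]$ then closes, the $t$-dependent stochastic convolution in $dY$ being handled by Burkholder--Davis--Gundy together with a stochastic Fubini/factorisation argument. Since $|U^n_s(\varphi)|\le\|\mu^n_s\|_{(C_P^4)^\ast}\,\|\varphi\|_{C_P^4}$, this yields the claimed bound $C(t,\varphi)$ uniform in $n$. The hard part is precisely this closure: taming the $A$-induced derivative loss via the semigroup and controlling the $R^n$-driven terms uniformly in $n$, for which the centering $n(s-\tau_n(s))-\tfrac12$ and the quadratic-variation bound \eqref{eq:QuadVarRn} (the mechanism behind Lemma \ref{independentBM}) are essential.
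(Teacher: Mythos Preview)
Your proposal takes a genuinely different and much harder route than the paper. The paper's proof is nearly trivial once one invokes an external result: Theorem 2.3 in \citep{CriO-L20} already gives, for any $\psi\in C_P^2$, the \emph{pointwise-in-time} bound
\[
\sup_n\sup_{0\le s\le t}\mathbb{E}^Q\bigl[|U_s^n(\psi)|^2\bigr]<\infty.
\]
Since $\varphi\in C_P^4$ implies $A\varphi,\ \varphi h_i\in C_P^2$, this bound applies directly to the integrands of the two ``feedback'' terms in \eqref{eqUn}. One then writes $U_t^n(\varphi)=\int_0^t U_s^n(A\varphi)\,ds+\int_0^t U_s^n(\varphi h^T)\,dY(s)+S_t(\varphi)$, bounds the $ds$ integral by Cauchy--Schwarz and Fubini (using the pointwise bound), and the $dY$ and $dR^n$ integrals by Doob's maximal inequality; the source term $S_t$ is handled exactly as you did. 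No Gronwall, no mild formulation, no derivative-loss issue: the imported pointwise bound sidesteps all of that.

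Your identification of the derivative-loss obstacle to a naive Gronwall is entirely correct, and curing it via the semigroup mild formulation is a natural idea. But it is considerably more work, and several steps you sketch are nontrivial: making sense of $\|\mu^n_s\|_{(C_P^4)^*}$ requires fixing a concrete weighted $C^4$ norm on which $T_s$ acts boundedly; the supremum over the unit ball of that space does not commute with the $L^2(\Omega)$-norm of a stochastic integral, so upgrading a bound on $|U^n_s(\varphi)|$ for each $\varphi$ to a bound on the dual norm needs a genuine argument; and the stochastic-convolution estimate you invoke must cope with the $T_{t-s}$ dependence inside the $dY$ integral as well as an $R^n$ driver whose quadratic variation is only $t+O(n^{-1})$ rather than exactly $t$. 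None of this is insurmountable, but it is a substantial technical debt that the paper avoids entirely by importing the pointwise bound from \citep{CriO-L20}.
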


\begin{proof} Theorem 2.3 in \citep{CriO-L20}, with $m=1$, states
that, for any $\varphi\in C_{P}^{2}$, there exists a constant $C$ (not depending on $n$ but possibly
on $t,\varphi,b$, $\sigma$, and $h$) such that 
\[
\sup_{n}\sup_{0\leq s\leq t}\mathbb{E}^{Q}\left[\left|U_{s}^{n}\left(\varphi\right)\right|^{2}\right] <\infty,
\]

This estimate clearly yields that, for any $t\ge 0$ and $\varphi\in C_{P}^{4}$ (we need the additional smoothness to ensure that $A\varphi\in C_{P}^{4}$) 
\[
\bar{C}_t:=\sup_{n}\left(
\sup_{0\leq s\leq t}\mathbb{E}^{Q}\left[\left|U_{s}^{n}\left(A\varphi\right)\right|^{2}\right] + 
\sup_{0\leq s\leq t}\mathbb{E}^{Q}\left[\left|U_{s}^{n}\left(\varphi h\right)\right|^{2}\right]\right) <\infty,
\]
By
Proposition \ref{Pr:Un}, we can write
\[
U_{t}^{n}\left(\varphi\right)=\int_{0}^{t}U_{s}^{n}\left(A\varphi\right)ds+\int_{0}^{t}U_{s}^{n}\left(\varphi h^{T}\right)dY\left(s\right)+S_{t}\left(\varphi\right),
\]
where in $S_{t}\left(\varphi\right)$ we put all the terms in equation
$\left(\ref{eqUn}\right)$ not containing $U^{n}.$ We deduce that  
\begin{align*}
\mathbb{E}^{Q}\left[\sup_{0\leq s\leq t}\left|U_{s}^{n}\left(\varphi\right)\right|^{2}\right] & \leq3\mathbb{E}^{Q}\left[\sup_{0\leq s\leq t}\left|\int_{0}^{s}U_{r}^{n}\left(A\varphi\right)dr\right|^{2}\right]\\
 & \quad+3\mathbb{E}^{Q}\left[\sup_{0\leq s\leq t}\left|\int_{0}^{s}U_{r}^{n}\left(\varphi h^{T}\right)dY\left(r\right)\right|^{2}\right]\\
 & \quad+3\mathbb{E}^{Q}\left[\sup_{0\leq s\leq t}\left|S_{s}\left(\varphi\right)\right|^{2}\right]\\
 & \triangleq3(I_{1}^{n}+I_{2}^{n}+I_{3}^{n}).
\end{align*}
and, therefore, it suffices to bound $I_{1}^{n},I_{2}^{n}$ and $I_{3}^{n}$
to justify the claim.
Using Cauchy-Schwarz' inequality and Fubini's theorem we obtain
\[
I_{1}^{n}\leq\mathbb{E}^{Q}\left[\sup_{0\leq s\leq t}s\int_{0}^{s}\left|U_{r}^{n}\left(A\varphi\right)\right|^{2}dr\right]\leq t\int_{0}^{t}\mathbb{E}^{Q}\left[\left|U_{r}^{n}\left(A\varphi\right)\right|^{2}\right]dr\leq t^{2}\bar{C}_t,
\]
All the remaining terms are stochastic integrals with respect to continuous martingales and can be controlled by means of Doob's maximal inequality. \end{proof}

We are now in a position to state and prove the main result of this
section.

\begin{theorem}\label{th:lr} Let $\{U^{n}\}_{n\geq1}$ be the sequence
of processes given by 
\[
U_{t}^{n}=n\left(\rho_{t}-\rho_{t}^{n}\right),\qquad t\in[0,\infty).
\]
Then for each $\varphi\in C_{P}^{8}$, the sequence $\left\{ U^{n}\left(\varphi\right),U^{n}\left(A\varphi\right),U^{n}\left(\varphi h^{i}\right),i=1,\ldots, d_Y\right\} $
is relatively compact in $C_{\mathbb{R}^{2+d_{Y}}}[0,\infty)$, and every limit point
satisfies 
\begin{eqnarray}
U_{t}\left(\varphi\right) & = & \int_{0}^{t}U_{s}\left(A\varphi\right)ds+\int_{0}^{t}U_{s}\left(\varphi h^{T}\right)dY\left(s\right)+\frac{1}{2}\int_{0}^{t}\rho_{s}\left(\varphi Ah^{T}\right)dY\left(s\right)\label{eqU}\\
 &  & \quad+\frac{1}{2\sqrt{3}}\int_{0}^{t}\rho_{s}\left(\varphi Ah^{T}\right)d\mathcal{R}\left(s\right)+\frac{1}{2\sqrt{3}}\sum_{i=1}^{d_{Y}}\int_{0}^{t}\rho_{s}\left(\mathrm{tr}\left(\widetilde{O}_{\sigma,\varphi,h_{i}}\right)\right)d\mathcal{R}^{i}\left(s\right)\nonumber \\
 &  & \quad+\frac{1}{2}\sum_{i=1}^{d_{Y}}\int_{0}^{t}\rho_{s}\left(\mathrm{tr}\left(\widetilde{O}_{\sigma,\varphi,h_{i}}\right)\right)dY^{i}\left(s\right),\nonumber 
\end{eqnarray}
where $\mathcal{R}$ is a Brownian motion independent of $Y$ and
all $B_{k}$. \end{theorem}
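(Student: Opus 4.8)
The plan is to establish the two assertions — relative compactness and identification of every limit point — by exploiting the approximate evolution equation \eqref{eqUn} of Proposition \ref{Pr:Un} together with the uniform second moment bound of Lemma \ref{Uest}, and then to pass to the limit in \eqref{eqUn} using the joint convergence of the driving data supplied by Lemma \ref{independentBM} and Proposition \ref{Prop: PicardParticleRepresentation1}. Since the object of interest is the finite collection $\left(U^n(\varphi),U^n(A\varphi),U^n(\varphi h^i)\right)$, each component of which is of the form $U^n(\psi)$ for a test function $\psi$ obtained from $\varphi$ by applying $A$ or by multiplying by a component of $h$, the hypothesis $\varphi\in C_P^8$ is exactly what guarantees that all the test functions appearing as drift and diffusion coefficients in the evolution equations for these three processes lie in $C_P^4$: writing \eqref{eqUn} for $\psi=A\varphi$ produces the drift integrand $U^n_s(A^2\varphi)$, which forces $A^2\varphi\in C_P^4$, i.e. $\varphi\in C_P^8$, and then Lemma \ref{Uest} applies to every integrand that occurs.

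For relative compactness I would argue component by component and then use that a finite product of tight families is tight. Fixing $\psi\in\{\varphi,A\varphi,\varphi h^i\}$, equation \eqref{eqUn} decomposes $U^n(\psi)$ into an absolutely continuous drift $\int_0^t U_s^n(A\psi)\,ds$, a continuous martingale part collecting the integrals against $Y$ and $R^n$ (with quadratic variation controlled by $\int_0^t U_s^n(\psi h^T)^2\,ds$ and by the $\rho^n$-integrands $\rho_s^n(\psi Ah^T)$ and $\rho_{\tau_n(s)}^n(\mathrm{tr}(\widetilde O_{\sigma,\psi,h_i}))$), and the remainder $\Gamma^n(\psi)=\mathcal{O}(n^{-1/2})$. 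Lemma \ref{Uest} bounds the $U^n$-integrands uniformly in $n$ in $L^2$, the moment estimates underlying Proposition \ref{Prop: PicardParticleRepresentation1} bound the $\rho^n$-integrands, and the quadratic variation estimates \eqref{eq:QuadVarRn} bound the contribution of the integrator $R^n$. These yield compact containment from $\sup_n\mathbb{E}^Q[\sup_{s\le T}|U_s^n(\psi)|^2]<\infty$ and uniform control of the increments of the drift and of the quadratic variation of the martingale part over short intervals; together with $\Gamma^n(\psi)\to 0$ this verifies the Aldous--Rebolledo criterion (see, e.g., \citep{EK86}) and gives relative compactness of the full vector in $C_{\mathbb{R}^{2+d_Y}}[0,\infty)$.

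To identify a limit point, I would extract a subsequence along which the vector converges in distribution and augment it with the driving data: by Lemma \ref{independentBM} one has $(Y,R^n)\Rightarrow(Y,\mathcal{R})$ with $\mathcal{R}$ a Brownian motion independent of $Y$ and the $B_k$, and by Proposition \ref{Prop: PicardParticleRepresentation1} $\rho^n\to\rho$ as a measure-valued process. Passing to a further subsequence the whole collection converges jointly, and by the Skorokhod representation theorem I may realize this convergence almost surely. I then pass to the limit in \eqref{eqUn} term by term: the Lebesgue integral $\int_0^t U_s^n(A\varphi)\,ds\to\int_0^t U_s(A\varphi)\,ds$ by continuous mapping, the frozen-argument terms satisfy $\rho_{\tau_n(s)}^n\to\rho_s$ because $\tau_n(s)\to s$ and $\rho$ has continuous paths, each stochastic integral against $Y$ or $R^n$ converges to the corresponding integral against $Y$ or $\mathcal{R}$ by the Kurtz--Protter stochastic integral convergence theorem \citep{KP91,KP96}, and $\Gamma^n(\varphi)\to 0$ in $L^2$. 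This produces \eqref{eqU}. Note that uniqueness of the limit is not claimed, so it suffices that every subsequential limit solves \eqref{eqU}.

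The hard part will be the joint convergence of the stochastic integrals, and in particular the two terms driven by $R^n$ in which both the integrator and the integrand depend on $n$. To invoke \citep{KP91,KP96} I must verify that $(Y,R^n)$ is a good (uniformly tight) sequence of integrators — which follows from their being continuous martingales with the uniformly bounded quadratic variations recorded in \eqref{eq:QuadVarRn}, the integrand defining $R^n$ being bounded — and that the integrands converge jointly with the integrators while remaining adapted, so that the limits are genuine Itô integrals against $(Y,\mathcal{R})$ with respect to the correct filtration. Equally delicate is confirming that the independence of $\mathcal{R}$ from $Y$ and the $B_k$ survives the joint limit, since this independence is what makes \eqref{eqU} a well-posed stochastic evolution equation.
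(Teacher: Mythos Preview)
Your proposal is correct and follows essentially the same route as the paper: both use the approximate evolution equation \eqref{eqUn} from Proposition~\ref{Pr:Un}, the uniform $L^{2}$ bound of Lemma~\ref{Uest} to control the $U^{n}$-integrands, and the convergence of $\rho^{n}$ together with $(Y,R^{n})\Rightarrow(Y,\mathcal{R})$ to pass to the limit in the remaining terms. The paper's proof is considerably terser---it phrases the tightness step as a two-stage bootstrap (relative compactness of $U^{n}(\psi)$ for $\psi\in C_{P}^{6}$, then applied to the integrands when $\varphi\in C_{P}^{8}$) rather than invoking Aldous--Rebolledo explicitly, and it leaves the stochastic-integral convergence and joint-law details implicit---but the substance is the same as what you wrote out.
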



\begin{proof} 
Fix  $\varphi\in C_{P}^{6}$.
This assumption along with the assumptions on $\sigma$, $b$, and
$h$, assure that $A\varphi\in C_{P}^{4}$ and hence that the estimate
in Lemma \ref{Uest} applies. Using this estimate on the integrands
in the first two integrals on the right of (\ref{eqUn}), we see that
these integrals are relatively compact in $C_{\mathbb{R}}[0,\infty)$.
The remaining terms on the right converge by the convergence of $\rho^{n}$.
Consequently, $\{U^{n}(\varphi)\}$ is relatively compact in $C_{\mathbb{R}}[0,\infty). $  Moreover, if we take $\varphi\in C_{P}^{8}$, $A\varphi\in C_{P}^{6}$, so the
integrands in the first two terms are relatively compact, and (\ref{eqUn})
is satisfied for any limit point. \end{proof} 

\subsection{Proof of Proposition \ref{Pr:Un}}

\label{section:Pr:Un}

We can combine $\left(\ref{geq1}\right)$ and 
\begin{align}
\varphi\left(X_{k}\left(t\right)\right)L_{k}^{n}\left(t\right) & =\varphi\left(X_{k}\left(0\right)\right)+\int_{0}^{t}L_{k}^{n}\left(s\right)\nabla\varphi^{T}\sigma\left(X_{k}\left(s\right)\right)dB_{k}\left(s\right)\nonumber \\
 & +\int_{0}^{t}L_{k}^{n}\left(s\right)A\varphi\left(X_{k}\left(s\right)\right)ds+\int_{0}^{t}L_{k}^{n}\left(s\right)\varphi h^{T}\left(X_{k}\left(s\right)\right)dY\left(s\right)\nonumber \\
 & -\int_{0}^{t}L_{k}^{n}\left(s\right)\varphi\left(X_{k}\left(s\right)\right)\left\{ h\left(X_{k}\left(s\right)\right)-h\left(X_{k}\left(\tau_{n}\left(s\right)\right)\right)\right\} ^{T}dY\left(s\right).\label{eq:Particle-Fi*Zn}
\end{align}
to write a more convenient expression for $E_{t}^{k,n}\left(\varphi\right)$,
that is, 
\begin{align*}
E_{t}^{k,n}\left(\varphi\right) & =n\varphi\left(X_{k}\left(t\right)\right)\left(L_{k}\left(t\right)-L_{k}^{n}\left(t\right)\right)\\
 & =\int_{0}^{t}n\left\{ L_{k}\left(s\right)-L_{k}^{n}\left(s\right)\right\} \nabla\varphi^{T}\sigma\left(X_{k}\left(s\right)\right)dB_{k}\left(s\right)\\
 & \quad+\int_{0}^{t}n\left\{ L_{k}\left(s\right)-L_{k}^{n}\left(s\right)\right\} A\varphi\left(X_{k}\left(s\right)\right)ds\\
 & \quad+\int_{0}^{t}n\left\{ L_{k}\left(s\right)-L_{k}^{n}\left(s\right)\right\} \varphi h^{T}\left(X_{k}\left(s\right)\right)dY\left(s\right)\\
 & \quad+\sum_{i=1}^{d_{Y}}\int_{0}^{t}nL_{k}^{n}\left(s\right)\varphi\left(X_{k}\left(s\right)\right)dM_{k}^{n,i}\left(s\right),
\end{align*}
where the processes $M_{k}^{n,i}$ are defined by 
\[
M_{k}^{n,i}\left(t\right)\triangleq\int_{0}^{t}\left(h_{i}\left(X_{k}\left(s\right)\right)-h_{i}\left(X_{k}\left(\tau_{n}\left(s\right)\right)\right)\right)dY^{i}\left(s\right).
\]
Moreover, using Itô's formula , we can write 
\begin{align*}
nM_{k}^{n,i}\left(t\right) & =\int_{0}^{t}n\left[\int_{\tau_{n}(s)}^{s}\nabla h_{i}^{T}\sigma\left(X_{k}\left(u\right)\right)dB_{k}\left(u\right)+\int_{\tau_{n}(s)}^{s}Ah_{i}\left(X_{k}\left(u\right)\right)du\right]dY^{i}\left(s\right)\\
 & =\int_{0}^{t}\int_{\tau_{n}(s)}^{s}n\nabla h_{i}^{T}\sigma\left(X_{k}\left(u\right)\right)dB_{k}\left(u\right)dY^{i}\left(s\right)\\
 & \quad+\int_{0}^{t}\int_{\tau_{n}(s)}^{s}nAh_{i}\left(X_{k}\left(u\right)\right)dudY^{i}\left(s\right)\\
 & =\int_{0}^{t}\int_{\tau_{n}(s)}^{s}n\nabla h_{i}^{T}\sigma\left(X_{k}\left(u\right)\right)dB_{k}\left(u\right)dY^{i}\left(s\right)\\
 & \quad+\int_{0}^{t}Ah_{i}\left(X_{k}\left(\tau_{n}\left(s\right)\right)\right)n\left(s-\tau_{n}\left(s\right)\right)dY^{i}\left(s\right)\\
 & \quad+\int_{0}^{t}\int_{\tau_{n}(s)}^{s}n\left\{ Ah_{i}\left(X_{k}\left(u\right)\right)-Ah_{i}\left(X_{k}\left(\tau_{n}\left(s\right)\right)\right)\right\} dudY^{i}\left(s\right)\\
 & =\int_{0}^{t}\int_{\tau_{n}(s)}^{s}n\nabla h_{i}^{T}\sigma\left(X_{k}\left(u\right)\right)dB_{k}\left(u\right)dY^{i}\left(s\right)+K_{k}^{n,i}\left(t\right)+I_{k}^{n,i}\left(t\right),
\end{align*}
where 
\begin{align*}
K_{k}^{n,i}\left(t\right) & \triangleq\int_{0}^{t}Ah_{i}\left(X_{k}\left(\tau_{n}\left(s\right)\right)\right)n\left(s-\tau_{n}\left(s\right)\right)dY^{i}\left(s\right),\\
I_{k}^{n,i}\left(t\right) & \triangleq\int_{0}^{t}\int_{\tau_{n}(s)}^{s}n\left\{ Ah_{i}\left(X_{k}\left(u\right)\right)-Ah_{i}\left(X_{k}\left(\tau_{n}\left(s\right)\right)\right)\right\} dudY^{i}\left(s\right).
\end{align*}
Since, 
\[
\left(n\left(s-\tau_{n}\left(s\right)\right)\right)dY^{i}\left(s\right)=\frac{dR^{n,i}\left(s\right)}{2\sqrt{3}}+\frac{1}{2}dY^{i}\left(s\right),
\]
we can write 
\[
K_{k}^{n,i}\left(t\right)=\frac{1}{2\sqrt{3}}\int_{0}^{t}Ah_{i}\left(X_{k}\left(s\right)\right)dR^{n,i}\left(s\right)+\frac{1}{2}\int_{0}^{t}Ah_{i}\left(X_{k}\left(s\right)\right)dY^{i}\left(s\right),
\]
and 
\begin{align*}
nM_{k}^{n,i}\left(t\right) & =\int_{0}^{t}\int_{\tau_{n}(s)}^{s}n\nabla h_{i}^{T}\sigma\left(X_{k}\left(u\right)\right)dB_{k}\left(u\right)dY^{i}\left(s\right)\\
 & \quad+\frac{1}{2\sqrt{3}}\int_{0}^{t}Ah_{i}\left(X_{k}\left(s\right)\right)dR^{n,i}\left(s\right)\\
 & \quad+\frac{1}{2}\int_{0}^{t}Ah_{i}\left(X_{k}\left(s\right)\right)dY^{i}\left(s\right)+I_{k}^{n,i}\left(s\right).
\end{align*}
Finally, we can also write 
\begin{align*}
E_{t}^{k,n}\left(\varphi\right) & =\int_{0}^{t}n\left\{ L_{k}\left(s\right)-L_{k}^{n}\left(s\right)\right\} \nabla\varphi^{T}\sigma\left(X_{k}\left(s\right)\right)dB_{k}\left(s\right)\\
 & \quad+\int_{0}^{t}n\left\{ L_{k}\left(s\right)-L_{k}^{n}\left(s\right)\right\} A\varphi\left(X_{k}\left(s\right)\right)ds\\
 & \quad+\int_{0}^{t}n\left\{ L_{k}\left(s\right)-L_{k}^{n}\left(s\right)\right\} \varphi h^{T}\left(X_{k}\left(s\right)\right)dY\left(s\right)\\
 & \quad+\sum_{i=1}^{d_{Y}}\int_{0}^{t}L_{k}^{n}\left(s\right)\varphi\left(X_{k}\left(s\right)\right)\int_{\tau_{n}(s)}^{s}n\nabla h_{i}^{T}\sigma\left(X_{k}\left(u\right)\right)dB_{k}\left(u\right)dY^{i}\left(s\right)\\
 & \quad+\sum_{i=1}^{d_{Y}}\frac{1}{2}\int_{0}^{t}L_{k}^{n}\left(s\right)\varphi Ah_{i}\left(X_{k}\left(s\right)\right)dY^{i}\left(s\right)\\
 & \quad+\sum_{i=1}^{d_{Y}}\frac{1}{2\sqrt{3}}\int_{0}^{t}L_{k}^{n}\left(s\right)\varphi Ah_{i}\left(X_{k}\left(s\right)\right)dR^{n,i}\left(s\right)\\
 & \quad+\sum_{i=1}^{d_{Y}}\int_{0}^{t}L_{k}^{n}\left(s\right)\varphi\left(X_{k}\left(s\right)\right)dI_{k}^{n,i}\left(s\right)\triangleq\sum_{i=1}^{7}A_{i}.
\end{align*}
The result follows by averaging over $1\leq k\leq N$ in the previous
equation and taking limits when $N$ tends to infinity, combined with
Lemmas \ref{exchconv}, \ref{lem: aux1} and \ref{lem: aux2}:
\begin{itemize}
\item For the term on the left hand side of the previous equation we have
\[
\lim_{N\rightarrow\infty}\frac{1}{N}\sum_{k=1}^{N}E_{t}^{k,n}\left(\varphi\right)=\lim_{N\rightarrow\infty}U_{t}^{N,n}\left(\varphi\right)=U_{t}^{n}\left(\varphi\right).
\]
\item For the term $A_{1}$ we can write 
\begin{align*}
 & \lim_{N\rightarrow\infty}\frac{1}{N}\sum_{k=1}^{N}\int_{0}^{t}n\left\{ L_{k}\left(s\right)-L_{k}^{n}\left(s\right)\right\} \nabla\varphi^{T}\sigma\left(X_{k}\left(s\right)\right)dB_{k}\left(s\right)\\
 & =\mathbb{E}^{Q}\left[\left.\int_{0}^{t}n\left\{ L\left(s\right)-L^{n}\left(s\right)\right\} \nabla\varphi^{T}\sigma\left(X\left(s\right)\right)dB\left(s\right)\right|\mathcal{F}_{t}^{Y}\right]\\
 & =\mathbb{E}^{Q}\left[\left.\mathbb{E}^{Q}\left[\left.\int_{0}^{t}n\left\{ L\left(s\right)-L^{n}\left(s\right)\right\} \nabla\varphi^{T}\sigma\left(X\left(s\right)\right)dB\left(s\right)\right|\mathcal{F}_{t}^{Y}\vee\mathcal{F}_{0}^{V}\right]\right|\mathcal{F}_{t}^{Y}\right]=0.
\end{align*}
\item For the term $A_{2}$, using Proposition 3.15 in \citep{bc2009},
we can write
\begin{align*}
 & \lim_{N\rightarrow\infty}\frac{1}{N}\sum_{k=1}^{N}\int_{0}^{t}n\left\{ L_{k}\left(s\right)-L_{k}^{n}\left(s\right)\right\} A\varphi\left(X_{k}\left(s\right)\right)ds\\
 & =\mathbb{E}^{Q}\left[\left.\int_{0}^{t}n\left\{ L\left(s\right)-L^{n}\left(s\right)\right\} A\varphi\left(X\left(s\right)\right)ds\right|\mathcal{F}_{t}^{Y}\right]\\
 & =\int_{0}^{t}n\mathbb{E}^{Q}\left[\left.\left\{ L\left(s\right)-L^{n}\left(s\right)\right\} A\varphi\left(X\left(s\right)\right)\right|\mathcal{F}_{s}^{Y}\right]ds\\
 & =\int_{0}^{t}U_{s}^{n}\left(A\varphi\right)ds.
\end{align*}
\item The terms $A_{3},A_{5}$ and $A_{6}$ are treated similarly as the
term $A_{2}$. Note that the processes $R^{n,i}$ are $\mathcal{F}_{t}^{Y}$-adapted.
\item For the term $A_{4}$ we apply Lemma \ref{lem: aux2}.
\item For the term $A_{7}$ we apply Lemma \ref{lem: aux1}.
\end{itemize}
The process $\Gamma_{t}^{n}\left(\varphi\right)=\sum_{i=1}^{d_{Y}}\left(\Phi_{t}^{n,i}\left(\varphi\right)+\Psi{}_{t}^{n,i}\left(\varphi\right)\right)$,
where $\Phi_{t}^{n,1}\left(\varphi\right)$ and $\Psi{}_{t}^{n,i}\left(\varphi\right)$
are the processes in the statement of Lemmas \ref{lem: aux1} and
\ref{lem: aux2}.

\subsection{Auxiliary lemmas}

\begin{lemma}\label{lem: aux1}

For all $i=1,...,d_{Y}$ and $\varphi$ Borel measurable with at most
polynomial growth, let 
\[
I_{k}^{n,i}\left(t\right)=\int_{0}^{t}\int_{\tau_{n}(s)}^{s}n\left\{ Ah_{i}\left(X_{k}\left(u\right)\right)-Ah_{i}\left(X_{k}\left(\tau_{n}\left(s\right)\right)\right)\right\} dudY^{i}\left(s\right).
\]
Then,
\[
\lim_{N\rightarrow\infty}\frac{1}{N}\sum_{k=1}^{N}\int_{0}^{t}L_{k}^{n}\left(s\right)\varphi\left(X_{k}\left(s\right)\right)dI_{k}^{n,i}\left(s\right)=\Phi_{t}^{n,i}\left(\varphi\right),
\]
where $\Phi_{t}^{n,i}\left(\varphi\right)=\mathcal{O}\left(n^{-1/2}\right).$

\end{lemma}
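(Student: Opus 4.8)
The plan is to first identify the $N\to\infty$ limit $\Phi_{t}^{n,i}(\varphi)$ in closed form and then estimate its size. Writing $dI_{k}^{n,i}(s)=J_{k}^{n}(s)\,dY^{i}(s)$ with
\[
J_{k}^{n}(s)=\int_{\tau_{n}(s)}^{s}n\left\{ Ah_{i}(X_{k}(u))-Ah_{i}(X_{k}(\tau_{n}(s)))\right\} du,
\]
the $k$-th summand is the stochastic integral $\int_{0}^{t}G_{k}^{n}(s)\,dY^{i}(s)$ with integrand $G_{k}^{n}(s)=L_{k}^{n}(s)\varphi(X_{k}(s))J_{k}^{n}(s)$. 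By linearity I move the average inside the integral and argue exactly as for the terms $A_{2},A_{3},A_{5},A_{6}$ in the proof of Proposition \ref{Pr:Un}: the exchangeability of $\{(X_{k},L_{k}^{n})\}$ (Lemma \ref{exchconv}) gives $\frac{1}{N}\sum_{k}G_{k}^{n}(s)\to g^{n}(s):=\mathbb{E}^{Q}[G^{n}(s)\mid\mathcal{F}_{s}^{Y}]$ for the generic copy $G^{n}$, and, since $Y$ is $\{\mathcal{F}_{s}^{Y}\}$-adapted, the stochastic-integral convergence theorem of \citep{KP91} lets me pass this through the $dY^{i}$ integral. Hence
\[
\Phi_{t}^{n,i}(\varphi)=\int_{0}^{t}g^{n}(s)\,dY^{i}(s),\qquad g^{n}(s)=\mathbb{E}^{Q}[L^{n}(s)\varphi(X(s))J^{n}(s)\mid\mathcal{F}_{s}^{Y}].
\]
Because under $Q$ the observation $Y$ is an $\{\mathcal{F}_{s}^{Y}\}$-Brownian motion and $g^{n}$ is $\{\mathcal{F}_{s}^{Y}\}$-adapted, $\Phi^{n,i}(\varphi)$ is a continuous square-integrable $\{\mathcal{F}_{s}^{Y}\}$-martingale.

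It then remains to prove $\Phi^{n,i}(\varphi)=\mathcal{O}(n^{-1/2})$. By Doob's maximal inequality and Itô's isometry,
\[
\mathbb{E}^{Q}\Big[\sup_{0\leq s\leq t}|\Phi_{s}^{n,i}(\varphi)|^{2}\Big]\leq4\,\mathbb{E}^{Q}\big[|\Phi_{t}^{n,i}(\varphi)|^{2}\big]=4\int_{0}^{t}\mathbb{E}^{Q}[(g^{n}(s))^{2}]\,ds,
\]
and conditional Jensen gives $\mathbb{E}^{Q}[(g^{n}(s))^{2}]\leq\mathbb{E}^{Q}[(G^{n}(s))^{2}]$. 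Thus everything reduces to the pointwise bound $\mathbb{E}^{Q}[(G^{n}(s))^{2}]\leq C(t)\,n^{-1}$ for $s\leq t$.

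The crux is the estimate on $J^{n}$. Since $h\in C_{b}^{6}$, the map $Ah_{i}$ is globally Lipschitz, so $|Ah_{i}(X(u))-Ah_{i}(X(\tau_{n}(s)))|\leq\|\nabla Ah_{i}\|_{\infty}|X(u)-X(\tau_{n}(s))|$. Using $s-\tau_{n}(s)\leq1/n$ and Jensen in the $du$-integral, for any integer $p\geq1$,
\[
\mathbb{E}^{Q}[|J^{n}(s)|^{2p}]\leq n^{2p}(s-\tau_{n}(s))^{2p-1}\|\nabla Ah_{i}\|_{\infty}^{2p}\int_{\tau_{n}(s)}^{s}\mathbb{E}^{Q}\big[|X(u)-X(\tau_{n}(s))|^{2p}\big]\,du\leq C_{p}\,n^{-p},
\]
where I invoked the increment bound $\mathbb{E}^{Q}[|X(u)-X(v)|^{2p}]\leq C_{p}|u-v|^{p}$, valid because $\sigma,b$ are bounded under $Q$. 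In particular $J^{n}(s)=\mathcal{O}(n^{-1/2})$ in $L^{2}$ and $L^{4}$. A final Cauchy--Schwarz splitting,
\[
\mathbb{E}^{Q}[(G^{n}(s))^{2}]\leq\mathbb{E}^{Q}\big[(L^{n}(s))^{4}\varphi(X(s))^{4}\big]^{1/2}\,\mathbb{E}^{Q}\big[(J^{n}(s))^{4}\big]^{1/2},
\]
controls the first factor uniformly in $n$ and $s\leq t$ by the discretized moment estimate for $L^{n}$ (the analogue of (\ref{momest2}), cf. Lemmas 3.6 and 3.9 in \citep{CriO-L20}), the polynomial growth of $\varphi$, and (\ref{momest1}), while the second factor is $\mathcal{O}(n^{-1})$ by the $L^{4}$ bound on $J^{n}$. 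This yields $\mathbb{E}^{Q}[(G^{n}(s))^{2}]\leq C(t)n^{-1}$ and hence $\Phi^{n,i}(\varphi)=\mathcal{O}(n^{-1/2})$.

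The main obstacle is precisely this $J^{n}$ estimate: one must keep the $n$-prefactor and the increment $Ah_{i}(X(u))-Ah_{i}(X(\tau_{n}(s)))$ together, since bounding $n(s-\tau_{n}(s))\leq1$ in isolation discards the smallness of the increment and leaves a spurious $\mathcal{O}(1)$ term. Absorbing $n^{2p}$ into $(s-\tau_{n}(s))^{2p-1}\leq n^{-(2p-1)}$ and letting the increment moment supply the decisive factor $(u-\tau_{n}(s))^{p}$ is what produces the exact order $n^{-1/2}$, confirming that $I^{n,i}$ is a genuine higher-order remainder that does not contribute to the leading error coefficient. By contrast, the passage $N\to\infty$ is routine, identical to the treatment of $A_{2}$--$A_{6}$; the only subtlety is that $\varphi$ is merely measurable, so I rely on the conditional independence of $\{(X_{k},L_{k}^{n})\}$ given $\mathcal{F}^{Y}$---note that $L_{k}^{n}(s)$ and $J_{k}^{n}(s)$ are pathwise (Riemann--Stieltjes) functionals of $X_{k}$ once $Y$ is fixed---rather than on any continuity of $\varphi$.
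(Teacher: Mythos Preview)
Your proof is correct and follows the same overall strategy as the paper: identify the $N\to\infty$ limit via exchangeability as a $dY^{i}$-stochastic integral of a conditional expectation, then bound it by Doob/BDG, conditional Jensen, and a Cauchy--Schwarz split into $\mathbb{E}^{Q}[(L^{n}\varphi(X))^{4}]^{1/2}\,\mathbb{E}^{Q}[(J^{n})^{4}]^{1/2}$. The one substantive difference is in how you estimate $J^{n}$: the paper applies It\^o's formula to $u\mapsto Ah_{i}(X(u))$, producing a $dB$-integral term $B_{1}$ and a $du$-term $B_{2}$ which it bounds separately by $Cn^{-2}$ and $Cn^{-4}$; you instead use that $Ah_{i}$ is globally Lipschitz (since $\sigma,b,h\in C_{b}^{6}$) to reduce directly to the SDE increment bound $\mathbb{E}^{Q}[|X(u)-X(\tau_{n}(s))|^{2p}]\leq C_{p}|u-\tau_{n}(s)|^{p}$. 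Both yield $\mathbb{E}^{Q}[(J^{n})^{4}]\leq Cn^{-2}$ and hence the claimed $\mathcal{O}(n^{-1/2})$; your route is slightly more elementary but relies on the boundedness of $\sigma,b$ assumed in Section~\ref{secterr}, whereas the paper's It\^o expansion would survive under merely polynomial-growth coefficients. For the $N\to\infty$ step, note that the paper applies Lemma~\ref{exchconv} to the full stochastic integral (which is continuous in $t$ regardless of $\varphi$) and then invokes Proposition~3.15 of \citep{bc2009} to pull the conditional expectation inside the $dY^{i}$-integral; your ``integrand first'' presentation is equivalent once you use, as you do, the conditional i.i.d.\ structure of $\{(X_{k},B_{k})\}$ given $\mathcal{F}^{Y}$ rather than continuity of $s\mapsto\varphi(X_{k}(s))$.
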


\begin{proof}

Applying Lemma \ref{exchconv} we have that 
\begin{align*}
 & \lim_{N\rightarrow\infty}\frac{1}{N}\sum_{k=1}^{N}\int_{0}^{t}L_{k}^{n}\left(s\right)\varphi\left(X_{k}\left(s\right)\right)dI_{k}^{n,i}\left(s\right)\\
 & =\mathbb{E}^{Q}\left[\left.\int_{0}^{t}L^{n}\left(s\right)\varphi\left(X\left(s\right)\right)dI^{n,i}\left(s\right)\right|\mathcal{F}_{t}^{Y}\right],
\end{align*}
where 
\[
dI^{n,i}\left(s\right)=\left(\int_{\tau_{n}(s)}^{s}n\left\{ Ah_{i}\left(X\left(u\right)\right)-Ah_{i}\left(X\left(\tau_{n}\left(s\right)\right)\right)\right\} du\right)dY^{i}\left(s\right).
\]
In what follows we will use the more compact notation 
\[
\Delta_{n}^{i}\left(u\right)\triangleq Ah_{i}\left(X\left(u\right)\right)-Ah_{i}\left(X\left(\tau_{n}\left(s\right)\right)\right),\quad\tau_{n}\left(s\right)\leq u\leq s.
\]
Note that, using Proposition 3.15 in \citep{bc2009}, we can write
\begin{align*}
 & \mathbb{E}^{Q}\left[\left.\int_{0}^{t}L^{n}\left(s\right)\varphi\left(X\left(s\right)\right)dI^{n,i}\left(s\right)\right|\mathcal{F}_{t}^{Y}\right]\\
 & =\mathbb{E}^{Q}\left[\left.\int_{0}^{t}L^{n}\left(s\right)\varphi\left(X\left(s\right)\right)\left(\int_{\tau_{n}(s)}^{s}n\Delta_{n}^{i}\left(u\right)du\right)dY^{i}\left(s\right)\right|\mathcal{F}_{t}^{Y}\right]\\
 & =\int_{0}^{t}\mathbb{E}^{Q}\left[\left.L^{n}\left(s\right)\varphi\left(X\left(s\right)\right)\left(\int_{\tau_{n}(s)}^{s}n\Delta_{n}^{i}\left(u\right)du\right)\right|\mathcal{F}_{s}^{Y}\right]dY^{i}\left(s\right)\\
 & \triangleq\Phi_{t}^{n,i}\left(\varphi\right)
\end{align*}
Moreover, using Burkholder-Davis-Gundy inequality, Jensen's inequality
for conditional expectation, the law of total expectation, Fubini's
theorem and Cauchy-Schwarz inequality we obtain
\begin{align*}
 & \mathbb{E}^{Q}\left[\sup_{0\leq t\leq T}\left|\mathbb{E}^{Q}\left[\left.\int_{0}^{t}L_{1}^{n}\left(s\right)\varphi\left(X_{1}\left(s\right)\right)dI_{1}^{n,i}\left(s\right)\right|\mathcal{F}_{t}^{Y}\right]\right|^{2}\right]\\
 & \leq\mathbb{E}^{Q}\left[\int_{0}^{T}\left|\mathbb{E}^{Q}\left[\left.L^{n}\left(s\right)\varphi\left(X\left(s\right)\right)\left(\int_{\tau_{n}(s)}^{s}n\Delta_{n}^{i}\left(u\right)du\right)\right|\mathcal{F}_{s}^{Y}\right]\right|^{2}ds\right]\\
 & \leq\mathbb{E}^{Q}\left[\int_{0}^{T}\left|L^{n}\left(s\right)\varphi\left(X\left(s\right)\right)\left(\int_{\tau_{n}(s)}^{s}n\Delta_{n}^{i}\left(u\right)du\right)\right|^{2}ds\right]\\
 & =\int_{0}^{T}\mathbb{E}^{Q}\left[\left|L^{n}\left(s\right)\varphi\left(X\left(s\right)\right)\left(\int_{\tau_{n}(s)}^{s}n\Delta_{n}^{i}\left(u\right)du\right)\right|^{2}\right]ds\\
 & \leq\int_{0}^{T}\mathbb{E}^{Q}\left[\left|L^{n}\left(s\right)\varphi\left(X\left(s\right)\right)\right|^{4}\right]^{1/2}\mathbb{E}^{Q}\left[\left|\left(\int_{\tau_{n}(s)}^{s}n\Delta_{n}^{i}\left(u\right)du\right)\right|^{4}\right]^{1/2}ds.
\end{align*}
Using Jensen's innequality and Itô's formula we get that 
\begin{align*}
\mathbb{E}^{Q}\left[\left|\left(\int_{\tau_{n}(s)}^{s}n\Delta_{n}^{i}\left(u\right)du\right)\right|^{4}\right] & \leq\left(s-\tau_{n}\left(s\right)\right)^{3}n^{4}\int_{\tau_{n}(s)}^{s}\mathbb{E}^{Q}\left[\left|\Delta_{n}^{i}\left(u\right)\right|^{4}\right]du\\
 & \leq Cn\left\{ \int_{\tau_{n}(s)}^{s}\mathbb{E}^{Q}\left[\left|\int_{\tau_{n}\left(s\right)}^{u}\nabla\left(Ah_{i}\right)^{T}\sigma\left(X\left(v\right)\right)dB\left(v\right)\right|^{4}\right]du\right.\\
 & \quad+\left.\int_{\tau_{n}(s)}^{s}\mathbb{E}^{Q}\left[\left|\int_{\tau_{n}\left(s\right)}^{u}A^{2}h_{i}\left(X\left(v\right)\right)dv\right|^{4}\right]du\right\} \\
 & =B_{1}+B_{2}
\end{align*}
Due to the hypothesis on $\sigma,b$ and $h$ we have that $\nabla\left(Ah_{i}\right)^{T}\sigma$
has at most polynomial growth, which combined with the bound $\left(\ref{momest1}\right)$
yields that 
\[
\mathbb{E}^{Q}\left[\sup_{0\leq s\leq T}\left|\nabla\left(Ah_{i}\right)^{T}\sigma\left(X\left(s\right)\right)\right|^{4}\right]=C\left(T,h,b,\sigma\right)<\infty.
\]
Therefore, using Burkholder-Davis-Gundy innequality we obtain 
\begin{align*}
B_{1} & \leq Cn\int_{\tau_{n}(s)}^{s}\mathbb{E}^{Q}\left[\left|\int_{\tau_{n}\left(s\right)}^{u}\left|\nabla\left(Ah_{i}\right)^{T}\sigma\left(X\left(v\right)\right)\right|^{2}dv\right|^{2}\right]du\\
 & \leq Cn\int_{\tau_{n}(s)}^{s}\left(u-\tau_{n}\left(s\right)\right)\int_{\tau_{n}\left(s\right)}^{u}\mathbb{E}^{Q}\left[\left|\nabla\left(Ah_{i}\right)^{T}\sigma\left(X\left(v\right)\right)\right|^{4}\right]dvdu\\
 & \leq C\left(T,h,b,\sigma\right)n\int_{\tau_{n}(s)}^{s}\left(u-\tau_{n}\left(s\right)\right)^{2}du\\
 & \leq C\left(T,h,b,\sigma\right)n^{-2}
\end{align*}
For the term $B_{2}$ one can use similar reasonings as for $B_{1}$
to obtain that 
\[
B_{2}\leq C\left(T,h,b,\sigma\right)n^{-4},
\]
and, hence, 
\[
\mathbb{E}^{Q}\left[\left|\left(\int_{\tau_{n}(s)}^{s}n\Delta_{n}^{i}\left(u\right)du\right)\right|^{4}\right]^{1/2}\leq Cn^{-1}
\]
On the other hand, using Hölder's innequality, the bounds for $L^{n}$
in Lemma 3.9 in \citep{CriO-L20}, that $\varphi$ has at most polynomial
growth and $\left(\ref{momest1}\right)$ we get that
\[
\sup_{s\in\left[0,T\right]}\sup_{n\in\mathbb{N}}\mathbb{E}^{Q}\left[\left|L^{n}\left(s\right)\varphi\left(X\left(s\right)\right)\right|^{4}\right]<\infty.
\]
Combining the previous estimates we can conclude that $\Phi_{t}^{n,i}\left(\varphi\right)=\mathcal{O}\left(n^{-1/2}\right).$

\end{proof}

\begin{lemma}\label{lem: aux2}

For all $i=1,...,d_{Y}$ and $\varphi\in C_{P}^{3}$, we have that
\begin{align*}
 & \lim_{N\rightarrow\infty}\frac{1}{N}\sum_{k=1}^{N}n\int_{0}^{t}L_{k}^{n}\left(s\right)\varphi\left(X_{k}\left(s\right)\right)\int_{\tau_{n}(s)}^{s}\nabla h_{i}^{T}\sigma\left(X_{k}\left(u\right)\right)dB_{k}\left(u\right)dY^{i}\left(s\right)\\
 & =\frac{1}{2\sqrt{3}}\sum_{i=1}^{d_{Y}}\int_{0}^{t}\rho_{\tau_{n}\left(s\right)}^{n}\left(\mathrm{tr}\left(\widetilde{O}_{\sigma,\varphi,h_{i}}\right)\right)dR^{n,i}\left(s\right)\\
 & +\frac{1}{2}\sum_{i=1}^{d_{Y}}\int_{0}^{t}\rho_{\tau_{n}\left(s\right)}^{n}\left(\mathrm{tr}\left(\widetilde{O}_{\sigma,\varphi,h_{i}}\right)\right)dY^{i}\left(s\right)+\Psi_{t}^{n,i}\left(\varphi\right),
\end{align*}
where $\Psi_{t}^{n,i}\left(\varphi\right)=\mathcal{O}\left(n^{-1/2}\right).$

\end{lemma}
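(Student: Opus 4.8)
The plan is to follow the same three-move template as the proof of Lemma \ref{lem: aux1} --- pass the $N\to\infty$ limit through the exchangeable average, reduce to a one-particle conditional expectation, and estimate the remainder --- except that here the one-particle computation produces a genuine non-vanishing leading term (the Itô covariation), which must be isolated before the estimation. First I would apply Lemma \ref{exchconv} so that the average over $k$ converges to
\[
\mathbb{E}^{Q}\Big[n\int_0^t L^n(s)\varphi(X(s))\int_{\tau_n(s)}^s\nabla h_i^T\sigma(X(u))dB(u)\,dY^i(s)\Big|\mathcal{F}_t^Y\Big],
\]
and then use the conditional stochastic Fubini theorem (Proposition 3.15 in \citep{bc2009}, exactly as for the term $A_2$) to move the conditioning inside the $dY^i$-integral, obtaining $\int_0^t n\,\Theta^{n,i}(s)\,dY^i(s)$ with $\Theta^{n,i}(s)=\mathbb{E}^Q[L^n(s)\varphi(X(s))G^{n,i}(s)\mid\mathcal{F}_s^Y]$ and $G^{n,i}(s)=\int_{\tau_n(s)}^s\nabla h_i^T\sigma(X(u))dB(u)$.

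The heart of the argument is the computation of $\Theta^{n,i}(s)$. The key structural observation is that, because the frozen integrand $h(X(\tau_n(\cdot)))$ makes $L^n(s)$ measurable with respect to $\mathcal{F}^X_{\tau_n(s)}\vee\mathcal{F}^Y_s$, I can condition on this $\sigma$-algebra and pull $L^n(s)$ out; since $B$ and $Y$ are independent under $Q$, the inner conditional expectation then reduces to the Markovian quantity $\mathbb{E}^Q[\varphi(X(s))G^{n,i}(s)\mid\mathcal{F}^X_{\tau_n(s)}]$. Expanding $\varphi(X(s))$ by Itô's formula around $X(\tau_n(s))$, the $\varphi(X(\tau_n(s)))$ term annihilates $G^{n,i}$ (a zero-mean increment), the $\int A\varphi\,du$ term pairs with $G^{n,i}$ to give only an $\mathcal{O}(n^{-3/2})$ contribution, and the martingale term $\int\nabla\varphi^T\sigma\,dB$ produces, by the Itô isometry, the covariation $\int_{\tau_n(s)}^s\nabla\varphi^T\sigma\sigma^T\nabla h_i(X(u))\,du$. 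Since $\mathrm{tr}(\widetilde{O}_{\sigma,\varphi,h_i})=\mathrm{tr}(O_{\sigma,\varphi,h_i})=\nabla\varphi^T\sigma\sigma^T\nabla h_i$ (the trace is insensitive to the symmetrisation), freezing the integrand at $\tau_n(s)$ yields the leading term $\mathrm{tr}(\widetilde{O}_{\sigma,\varphi,h_i})(X(\tau_n(s)))(s-\tau_n(s))$ up to an $\mathcal{O}(n^{-2})$ error. Feeding this back and using that $\mathbb{E}^Q[L^n(\tau_n(s))\psi(X(\tau_n(s)))\mid\mathcal{F}_s^Y]=\rho^n_{\tau_n(s)}(\psi)$ --- the increment of $Y$ on $(\tau_n(s),s]$ being independent of $\mathcal{F}^Y_{\tau_n(s)}$ and of $X$ --- gives $n\,\Theta^{n,i}(s)=n(s-\tau_n(s))\rho^n_{\tau_n(s)}(\mathrm{tr}\,\widetilde{O}_{\sigma,\varphi,h_i})+(\text{error})$.

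Finally I would split the main term with the identity $n(s-\tau_n(s))dY^i(s)=\tfrac{1}{2\sqrt3}dR^{n,i}(s)+\tfrac12 dY^i(s)$ already used for $K^{n,i}_k$, which reproduces precisely the two stated integrals against $dR^{n,i}$ and $dY^i$; summing over $i$ matches the right-hand side, with $\Psi^{n,i}_t(\varphi)$ collecting the remainders. The main obstacle is the error bookkeeping, since every pointwise error is multiplied by $n$: I must show each remainder is $\mathcal{O}(n^{-3/2})$ in $L^2$ \emph{before} the $dY^i$-integration. The two delicate pieces are (i) the replacement of $L^n(s)$ by $L^n(\tau_n(s))$, whose leading discrepancy is proportional to $h^T(X(\tau_n(s)))(Y(s)-Y(\tau_n(s)))$, and (ii) the $\int A\varphi\,du$--$G^{n,i}$ cross term; in each case the extra half power of $n$ is recovered from the independence of the increment $Y(s)-Y(\tau_n(s))$ from $\mathcal{F}^Y_{\tau_n(s)}$ together with the Itô isometry for the outer integral, so that the square integral over $s$ contributes a factor $\int_0^t(s-\tau_n(s))\,ds=\mathcal{O}(n^{-1})$. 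These estimates rely on the moment bounds for $L^n$ from Lemma 3.9 in \citep{CriO-L20}, the growth bound \eqref{momest1}, and Doob's and Burkholder--Davis--Gundy inequalities, exactly as in the proof of Lemma \ref{lem: aux1}, yielding $\Psi^{n,i}_t(\varphi)=\mathcal{O}(n^{-1/2})$.
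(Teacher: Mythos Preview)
Your proposal is correct and reaches the same conclusion, but the route is genuinely different from the paper's. The paper stays at the particle level throughout: it writes $L_k^n(s)\varphi(X_k(s))=L_k^n(\tau_n(s))\varphi(X_k(\tau_n(s)))+\{D_{k,1}+D_{k,2}+D_{k,3}\}$ via integration by parts, simultaneously expands $\int_{\tau_n(s)}^s\nabla h_i^T\sigma\,dB_k$ as $E_{k,1}+E_{k,2}+E_{k,3}$ by It\^o, and then analyses the resulting nine cross terms one by one. The surviving contribution comes from $D_{k,3}\cdot E_{k,1}$, which produces the quadratic form $(B_k(s)-B_k(\tau_n(s)))^T O_{\sigma,\varphi,h_i}(B_k(s)-B_k(\tau_n(s)))$; a further integration by parts splits this into a $dB_k$-integral (which averages to zero over $k$) and the trace term times $(s-\tau_n(s))$, after which the $R^{n,i}$ decomposition is applied. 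All the discarded pieces are enumerated explicitly as thirteen error processes $\Psi^{n,i,1},\ldots,\Psi^{n,i,13}$.

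You instead pass to the one-particle conditional expectation immediately, exploit the key measurability $L^n(s)\in\mathcal{F}^X_{\tau_n(s)}\vee\mathcal{F}^Y_s$, and use the It\^o isometry to read off the covariation $\int_{\tau_n(s)}^s\nabla\varphi^T\sigma\sigma^T\nabla h_i\,du$ directly, without ever forming the quadratic Brownian increment. This is more economical: it bypasses the nine-term grid and the explicit $\Psi^{n,i,j}$ bookkeeping, at the cost of requiring the conditioning/measurability argument for $L^n(s)$ and the reduction $\mathbb{E}^Q[\,\cdot\mid\mathcal{F}^Y_s]=\mathbb{E}^Q[\,\cdot\mid\mathcal{F}^Y_{\tau_n(s)}]$ for $\mathcal{F}^Y_{\tau_n(s)}$-measurable integrands. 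The paper's approach, while lengthier, has the advantage of making every remainder visible as an explicit stochastic integral, which is convenient if one later needs sharper (e.g.\ higher-order) control on $\Psi^{n,i}$. Your remark that the freezing error is $\mathcal{O}(n^{-2})$ is slightly optimistic---it is $\mathcal{O}(n^{-3/2})$ in $L^2$---but this still yields $\Psi^{n,i}=\mathcal{O}(n^{-1/2})$ after the outer $dY^i$ integration, so the final bound is unaffected.
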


\begin{proof}

We can write

\begin{align*}
 & n\int_{0}^{t}L_{k}^{n}\left(s\right)\varphi\left(X_{k}\left(s\right)\right)\int_{\tau_{n}(s)}^{s}\nabla h_{i}^{T}\sigma\left(X_{k}\left(u\right)\right)dB_{k}\left(u\right)dY^{i}\left(s\right)\\
 & =n\int_{0}^{t}\left\{ L_{k}^{n}\left(s\right)\varphi\left(X_{k}\left(s\right)\right)-L_{k}^{n}\left(\tau_{n}\left(s\right)\right)\varphi\left(X_{k}\left(\tau_{n}\left(s\right)\right)\right)\right\} \\
 & \qquad\times\int_{\tau_{n}(s)}^{s}\nabla h_{i}^{T}\sigma\left(X_{k}\left(u\right)\right)dB_{k}\left(u\right)dY^{i}\left(s\right)\\
 & +n\int_{0}^{t}L_{k}^{n}\left(\tau_{n}\left(s\right)\right)\varphi\left(X_{k}\left(\tau_{n}\left(s\right)\right)\right)\int_{\tau_{n}(s)}^{s}\nabla h_{i}^{T}\sigma\left(X_{k}\left(u\right)\right)dB_{k}\left(u\right)dY^{i}\left(s\right)\\
 & \triangleq C_{k,1}+C_{k,2}.
\end{align*}
Using similar reasonings as for the term $A_{1}$ in the proof of
Proposition \ref{Pr:Un}), one has that 
\[
\lim_{N\rightarrow\infty}\frac{1}{N}\sum_{k=1}^{N}C_{k,2}=0.
\]
Using integration by parts we obtain 
\begin{align*}
 & L_{k}^{n}\left(s\right)\varphi\left(X_{k}\left(s\right)\right)-L_{k}^{n}\left(\tau_{n}\left(s\right)\right)\varphi\left(X_{k}\left(\tau_{n}\left(s\right)\right)\right)\\
 & =\int_{\tau_{n}\left(s\right)}^{s}L_{k}^{n}\left(u\right)\varphi h^{T}\left(X_{k}\left(\tau_{n}\left(s\right)\right)\right)dY\left(u\right)\\
 & \quad+\int_{\tau_{n}\left(s\right)}^{s}L_{k}^{n}\left(u\right)A\varphi\left(X_{k}\left(u\right)\right)du+\int_{\tau_{n}\left(s\right)}^{s}L_{k}^{n}\left(u\right)\nabla\varphi^{T}\sigma\left(X_{k}\left(u\right)\right)dB_{k}\left(u\right)\\
 & \triangleq D_{k,1}\text{\ensuremath{\left(s\right)}}+D_{k,2}\left(s\right)+D_{k,3}\left(s\right),
\end{align*}
and using Itô's formula 
\begin{align*}
\int_{\tau_{n}(s)}^{s}\nabla h_{i}^{T}\sigma\left(X_{k}\left(u\right)\right)dB_{k}\left(u\right) & =\nabla h_{i}^{T}\sigma\left(X_{k}\left(\tau_{n}\left(s\right)\right)\right)\left(B_{k}\left(s\right)-B_{k}\left(\tau_{n}\left(s\right)\right)\right)\\
 & \quad+\int_{\tau_{n}(s)}^{s}\int_{\tau_{n}(s)}^{u}\nabla\left(\nabla h_{i}^{T}\sigma\right)^{T}\sigma\left(X_{k}\left(v\right)\right)dB_{k}\left(v\right)dB_{k}\left(u\right)\\
 & \quad+\int_{\tau_{n}(s)}^{s}\int_{\tau_{n}(s)}^{u}A\left(\nabla h_{i}^{T}\sigma\right)^{T}\left(X_{k}\left(v\right)\right)dvdB_{k}\left(u\right)\\
 & \triangleq E_{k,1}\text{\ensuremath{\left(s\right)}}+E_{k,2}\left(s\right)+E_{k,3}\left(s\right).
\end{align*}
Hence, the term $C_{k,1}$ can be written as the sum of nine terms
\[
C_{k,1}=\sum_{l,m=1}^{3}n\int_{0}^{t}D_{k,l}\text{\ensuremath{\left(s\right)}}E_{k,m}\text{\ensuremath{\left(s\right)}}dY^{i}\left(s\right).
\]
The terms containing as a factor $D_{k,2}\left(s\right),E_{k,2}\left(s\right)$,
and $E_{k,3}\left(s\right)$, after averaging over $1\leq k\leq N$
and taking limit when $N$ tends to infinity, yield processes which
are at least of order $\mathcal{O}\left(n^{-1/2}\right)$. With similar
reasonings as in Lemma \ref{lem: aux1} we can identify these processes
as:

\begin{align*}
\Psi_{t}^{n,i,1}\left(\varphi\right) & \triangleq n\int_{0}^{t}\mathbb{E}^{Q}\left[\int_{\tau_{n}\left(s\right)}^{s}L^{n}\left(u\right)A\varphi\left(X\left(u\right)\right)du\right.\\
 & \qquad\times\left.\left.\nabla h_{i}^{T}\sigma\left(X\left(\tau_{n}\left(s\right)\right)\right)\left(B\left(s\right)-B\left(\tau_{n}\left(s\right)\right)\right)\right|\mathcal{F}_{s}^{Y}\right]dY^{i}\left(s\right),\\
\Psi_{t}^{n,i,2}\left(\varphi\right) & \triangleq n\int_{0}^{t}\mathbb{E}^{Q}\left[\int_{\tau_{n}\left(s\right)}^{s}L^{n}\left(u\right)A\varphi\left(X\left(u\right)\right)du\right.\\
 & \qquad\times\left.\left.\int_{\tau_{n}(s)}^{s}\int_{\tau_{n}(s)}^{u}\nabla\left(\nabla h_{i}^{T}\sigma\right)^{T}\sigma\left(X\left(v\right)\right)dB\left(v\right)dB\left(u\right)\right|\mathcal{F}_{s}^{Y}\right]dY^{i}\left(s\right),\\
\Psi_{t}^{n,i,3}\left(\varphi\right) & \triangleq n\int_{0}^{t}\mathbb{E}^{Q}\left[\int_{\tau_{n}\left(s\right)}^{s}L^{n}\left(u\right)A\varphi\left(X\left(u\right)\right)du\right.\\
 & \qquad\times\left.\left.\int_{\tau_{n}(s)}^{s}\int_{\tau_{n}(s)}^{u}A\left(\nabla h_{i}^{T}\sigma\right)^{T}\left(X\left(v\right)\right)dvdB\left(u\right)\right|\mathcal{F}_{s}^{Y}\right]dY^{i}\left(s\right),\\
\Psi_{t}^{n,i,4}\left(\varphi\right) & \triangleq n\int_{0}^{t}\mathbb{E}^{Q}\left[\int_{\tau_{n}\left(s\right)}^{s}L^{n}\left(u\right)\varphi h^{T}\left(X\left(\tau_{n}\left(s\right)\right)\right)dY\left(u\right)\right.\\
 & \qquad\times\left.\left.\int_{\tau_{n}(s)}^{s}\int_{\tau_{n}(s)}^{u}\nabla\left(\nabla h_{i}^{T}\sigma\right)^{T}\sigma\left(X\left(v\right)\right)dB\left(v\right)dB\left(u\right)\right|\mathcal{F}_{s}^{Y}\right]dY^{i}\left(s\right),\\
\Psi_{t}^{n,i,5}\left(\varphi\right) & \triangleq n\int_{0}^{t}\mathbb{E}^{Q}\left[\int_{\tau_{n}\left(s\right)}^{s}L^{n}\left(u\right)\nabla\varphi^{T}\sigma\left(X\left(u\right)\right)dB\left(u\right)\right.\\
 & \qquad\times\left.\left.\int_{\tau_{n}(s)}^{s}\int_{\tau_{n}(s)}^{u}\nabla\left(\nabla h_{i}^{T}\sigma\right)^{T}\sigma\left(X\left(v\right)\right)dB\left(v\right)dB\left(u\right)\right|\mathcal{F}_{s}^{Y}\right]dY^{i}\left(s\right),\\
\Psi_{t}^{n,i,6}\left(\varphi\right) & \triangleq n\int_{0}^{t}\mathbb{E}^{Q}\left[\int_{\tau_{n}\left(s\right)}^{s}L^{n}\left(u\right)\varphi h^{T}\left(X\left(\tau_{n}\left(s\right)\right)\right)dY\left(u\right)\right.\\
 & \qquad\times\left.\left.\int_{\tau_{n}(s)}^{s}\int_{\tau_{n}(s)}^{u}A\left(\nabla h_{i}^{T}\sigma\right)^{T}\left(X\left(v\right)\right)dvdB\left(u\right)\right|\mathcal{F}_{s}^{Y}\right]dY^{i}\left(s\right),\\
\Psi_{t}^{n,i,7}\left(\varphi\right) & \triangleq n\int_{0}^{t}\mathbb{E}^{Q}\left[\int_{\tau_{n}\left(s\right)}^{s}L^{n}\left(u\right)\nabla\varphi^{T}\sigma\left(X\left(u\right)\right)dB\left(u\right)\right.\\
 & \qquad\times\left.\left.\int_{\tau_{n}(s)}^{s}\int_{\tau_{n}(s)}^{u}A\left(\nabla h_{i}^{T}\sigma\right)^{T}\left(X\left(v\right)\right)dvdB\left(u\right)\right|\mathcal{F}_{s}^{Y}\right]dY^{i}\left(s\right),
\end{align*}
There are two terms left:
\begin{align*}
F_{k,1}^{n,i}\left(t,\varphi\right) & \triangleq n\int_{0}^{t}\left(\int_{\tau_{n}\left(s\right)}^{s}L_{k}^{n}\left(u\right)\varphi\left(X_{k}\left(u\right)\right)h^{T}\left(X_{k}\left(\tau_{n}\left(s\right)\right)\right)dY\left(u\right)\right)\\
 & \qquad\times\nabla h_{i}^{T}\sigma\left(X_{k}\left(\tau_{n}\left(s\right)\right)\right)\left(B_{k}\left(s\right)-B_{k}\left(\tau_{n}\left(s\right)\right)\right)dY^{i}\left(s\right),
\end{align*}
and
\begin{align*}
F_{k,2}^{n,i}\left(t,\varphi\right) & \triangleq n\int_{0}^{t}\left(\int_{\tau_{n}\left(s\right)}^{s}L_{k}^{n}\left(u\right)\nabla\varphi^{T}\sigma\left(X_{k}\left(u\right)\right)dB_{k}\left(u\right)\right)\\
 & \qquad\times\nabla h_{i}^{T}\sigma\left(X_{k}\left(\tau_{n}\left(s\right)\right)\right)\left(B_{k}\left(s\right)-B_{k}\left(\tau_{n}\left(s\right)\right)\right)dY^{i}\left(s\right).
\end{align*}

For the term $F_{k,1}^{n,i}\left(t,\varphi\right)$, using integration
by parts with $L_{k}^{n}\left(u\right)\varphi\left(X_{k}\left(u\right)\right)$,
we can write 
\begin{align*}
F_{k,1}^{n,i}\left(t,\varphi\right) & =n\int_{0}^{t}L_{k}^{n}\left(\tau_{n}\left(s\right)\right)\varphi h^{T}\left(X_{k}\left(\tau_{n}\left(s\right)\right)\right)\left(Y\left(s\right)-Y\left(\tau_{n}\left(s\right)\right)\right)\\
 & \qquad\times\nabla h_{i}^{T}\sigma\left(X_{k}\left(\tau_{n}\left(s\right)\right)\right)\left(B_{k}\left(s\right)-B_{k}\left(\tau_{n}\left(s\right)\right)\right)dY^{i}\left(s\right)\\
 & +\Psi_{t}^{k,n,i,8}\left(\varphi\right)+\Psi_{t}^{k,n,i,9}\left(\varphi\right)+\Psi_{t}^{k,n,i,10}\left(\varphi\right)
\end{align*}
We get, using similar reasonings as for the term $C_{k,2}$, that
\[
\lim_{N\rightarrow\infty}\frac{1}{N}\sum_{k=1}^{N}n\int_{0}^{t}L_{k}^{n}\left(\tau_{n}\left(s\right)\right)\varphi h^{T}\left(X_{k}\left(\tau_{n}\left(s\right)\right)\right)\left(Y\left(s\right)-Y\left(\tau_{n}\left(s\right)\right)\right)=0,
\]
and the following terms
\begin{align*}
\Psi_{t}^{n,i,8}\left(\varphi\right) & \triangleq\lim_{N\rightarrow\infty}\frac{1}{N}\sum_{k=1}^{N}\Psi_{t}^{k,n,i,8}\left(\varphi\right)\\
 & =n\int_{0}^{t}\mathbb{E}^{Q}\left[\int_{\tau_{n}\left(s\right)}^{s}\int_{\tau_{n}\left(s\right)}^{u}L^{n}\left(r\right)\nabla\varphi^{T}\sigma\left(X\left(r\right)\right)dB\left(r\right)h^{T}\left(X\left(\tau_{n}\left(s\right)\right)\right)dY\left(u\right)\right.\\
 & \qquad\times\left.\left.\nabla h_{i}^{T}\sigma\left(X\left(\tau_{n}\left(s\right)\right)\right)\left(B\left(s\right)-B\left(\tau_{n}\left(s\right)\right)\right)\right|\mathcal{F}_{s}^{Y}\right]dY^{i}\left(s\right)\\
\Psi_{t}^{n,i,9}\left(\varphi\right) & \triangleq\lim_{N\rightarrow\infty}\frac{1}{N}\sum_{k=1}^{N}\Psi_{t}^{k,n,i,9}\left(\varphi\right)\\
 & =n\int_{0}^{t}\mathbb{E}^{Q}\left[\int_{\tau_{n}\left(s\right)}^{s}\int_{\tau_{n}\left(s\right)}^{u}L^{n}\left(r\right)A\varphi\left(X\left(r\right)\right)drh^{T}\left(X\left(\tau_{n}\left(s\right)\right)\right)dY\left(u\right)\right.\\
 & \qquad\times\left.\left.\nabla h_{i}^{T}\sigma\left(X\left(\tau_{n}\left(s\right)\right)\right)\left(B\left(s\right)-B\left(\tau_{n}\left(s\right)\right)\right)\right|\mathcal{F}_{s}^{Y}\right]dY^{i}\left(s\right)\\
\Psi_{t}^{n,i,10}\left(\varphi\right) & \triangleq\lim_{N\rightarrow\infty}\frac{1}{N}\sum_{k=1}^{N}\Psi_{t}^{k,n,i,10}\left(\varphi\right)\\
 & =n\int_{0}^{t}\mathbb{E}^{Q}\left[\int_{\tau_{n}\left(s\right)}^{s}\int_{\tau_{n}\left(s\right)}^{u}L^{n}\left(r\right)\varphi\left(X\left(r\right)\right)h^{T}\left(X\left(\tau_{n}\left(s\right)\right)\right)dY\left(r\right)h^{T}\left(X\left(\tau_{n}\left(s\right)\right)\right)dY\left(u\right)\right.\\
 & \qquad\times\left.\left.\nabla h_{i}^{T}\sigma\left(X\left(\tau_{n}\left(s\right)\right)\right)\left(B\left(s\right)-B\left(\tau_{n}\left(s\right)\right)\right)\right|\mathcal{F}_{s}^{Y}\right]dY^{i}\left(s\right),
\end{align*}
which are at least of order $\mathcal{O}\left(n^{-1/2}\right)$.

For the term $F_{k,2}^{n,i}\left(t,\varphi\right)$, using integration
by parts with $L_{k}^{n}\left(u\right)\nabla\varphi^{T}\sigma\left(X_{k}\left(u\right)\right)$,
we can write 
\begin{align}
F_{k,2}^{n,i}\left(t,\varphi\right) & =n\int_{0}^{t}L_{k}^{n}\left(\tau_{n}\left(s\right)\right)\nabla\varphi^{T}\sigma\left(X_{k}\left(\tau_{n}\left(s\right)\right)\right)\left(B_{k}\left(s\right)-B_{k}\left(\tau_{n}\left(s\right)\right)\right)\nonumber \\
 & \qquad\times\nabla h_{i}^{T}\sigma\left(X_{k}\left(\tau_{n}\left(s\right)\right)\right)\left(B_{k}\left(s\right)-B_{k}\left(\tau_{n}\left(s\right)\right)\right)dY^{i}\left(s\right)\nonumber \\
 & +\Psi_{t}^{k,n,i,11}\left(\varphi\right)+\Psi_{t}^{k,n,i,12}\left(\varphi\right)+\Psi_{t}^{k,n,i,13}\left(\varphi\right).\label{eq:Fnik2}
\end{align}
Denote the first term on the right hand side of the previous equation
by $\widehat{F}_{k,2}^{n,i}\left(t,\varphi\right)$, and consider
the following matrices

\[
O_{\sigma,\varphi,h_{i}}\left(x\right)\triangleq\sigma^{T}\left(x\right)\nabla\varphi\left(x\right)\nabla h_{i}^{T}\left(x\right)\sigma\left(x\right),
\]
and 
\[
\widetilde{O}_{\sigma,\varphi,h_{i}}\left(x\right)\triangleq\frac{1}{2}\left(O_{\sigma,\varphi,h_{i}}\left(x\right)+O_{\sigma,\varphi,h_{i}}^{T}\left(x\right)\right).
\]
Then, $\widehat{F}_{k,2}^{n,i}\left(t,\varphi\right)$ can be further
expanded using again integration by parts 
\begin{align*}
\widehat{F}_{k,2}^{n,i}\left(t,\varphi\right) & =n\int_{0}^{t}L_{k}^{n}\left(\tau_{n}\left(s\right)\right)\nabla\varphi^{T}\sigma\left(X_{k}\left(\tau_{n}\left(s\right)\right)\right)\left(B_{k}\left(s\right)-B_{k}\left(\tau_{n}\left(s\right)\right)\right)\\
 & \qquad\qquad\times\nabla h_{i}^{T}\sigma\left(X_{k}\left(\tau_{n}\left(s\right)\right)\right)\left(B_{k}\left(s\right)-B_{k}\left(\tau_{n}\left(s\right)\right)\right)dY^{i}\left(s\right).\\
 & =n\int_{0}^{t}L_{k}^{n}\left(\tau_{n}\left(s\right)\right)\left(B_{k}\left(s\right)-B_{k}\left(\tau_{n}\left(s\right)\right)\right)^{T}O_{\sigma,\varphi,h_{i}}\left(X_{k}\left(\tau_{n}\left(s\right)\right)\right)\left(B_{k}\left(s\right)-B_{k}\left(\tau_{n}\left(s\right)\right)\right)dY^{i}\left(s\right)\\
 & =n\int_{0}^{t}L_{k}^{n}\left(\tau_{n}\left(s\right)\right)\int_{\tau_{n}\left(s\right)}^{s}\left(B_{k}\left(u\right)-B_{k}\left(\tau_{n}\left(s\right)\right)\right)^{T}\left(O_{\sigma,\varphi,h_{i}}+O_{\sigma,\varphi,h_{i}}^{T}\right)\left(X_{k}\left(\tau_{n}\left(s\right)\right)\right)dB_{k}\left(u\right)dY^{i}\left(s\right)\\
 & \quad+\frac{n}{2}\int_{0}^{t}L_{k}^{n}\left(\tau_{n}\left(s\right)\right)\mathrm{tr}\left(\left(O_{\sigma,\varphi,h_{i}}+O_{\sigma,\varphi,h_{i}}^{T}\right)\left(X_{k}\left(\tau_{n}\left(s\right)\right)\right)\right)\left(s-\tau_{n}\left(s\right)\right)dY^{i}\left(s\right)\\
 & =2n\int_{0}^{t}L_{k}^{n}\left(\tau_{n}\left(s\right)\right)\int_{\tau_{n}\left(s\right)}^{s}\left(B_{k}\left(u\right)-B_{k}\left(\tau_{n}\left(s\right)\right)\right)^{T}\widetilde{O}_{\sigma,\varphi,h_{i}}\left(X_{k}\left(\tau_{n}\left(s\right)\right)\right)dB_{k}\left(u\right)dY^{i}\left(s\right)\\
 & \quad+n\int_{0}^{t}L_{k}^{n}\left(\tau_{n}\left(s\right)\right)\mathrm{tr}\left(\widetilde{O}_{\sigma,\varphi,h_{i}}\left(X_{k}\left(\tau_{n}\left(s\right)\right)\right)\right)\left(s-\tau_{n}\left(s\right)\right)dY^{i}\left(s\right)
\end{align*}
On the one hand, using similar reasonings as for the term $C_{k,2},$
we have 
\[
\lim_{N\rightarrow\infty}\frac{1}{N}\sum_{k=1}^{N}2n\int_{0}^{t}L_{k}^{n}\left(\tau_{n}\left(s\right)\right)\int_{\tau_{n}\left(s\right)}^{s}\left(B_{k}\left(u\right)-B_{k}\left(\tau_{n}\left(s\right)\right)\right)^{T}\widetilde{O}_{\sigma,\varphi,h_{i}}\left(X_{k}\left(\tau_{n}\left(s\right)\right)\right)dB_{k}\left(u\right)dY^{i}\left(s\right)=0.
\]
On the other hand, recalling the expression for $n\left(s-\tau_{n}\left(s\right)\right)dY^{i}\left(s\right)$,
we get 
\begin{align*}
 & n\int_{0}^{t}L_{k}^{n}\left(\tau_{n}\left(s\right)\right)\mathrm{tr}\left(\widetilde{O}_{\sigma,\varphi,h_{i}}\left(X_{k}\left(\tau_{n}\left(s\right)\right)\right)\right)\left(s-\tau_{n}\left(s\right)\right)dY^{i}\left(s\right)\\
 & =\frac{1}{2\sqrt{3}}\int_{0}^{t}L_{k}^{n}\left(\tau_{n}\left(s\right)\right)\mathrm{tr}\left(\widetilde{O}_{\sigma,\varphi,h_{i}}\left(X_{k}\left(\tau_{n}\left(s\right)\right)\right)\right)dR^{n,i}\left(s\right)\\
 & \quad+\frac{1}{2}\int_{0}^{t}L_{k}^{n}\left(\tau_{n}\left(s\right)\right)\mathrm{tr}\left(\widetilde{O}_{\sigma,\varphi,h_{i}}\left(X_{k}\left(\tau_{n}\left(s\right)\right)\right)\right)dY^{i}\left(s\right),
\end{align*}
and 
\begin{align*}
 & \lim_{N\rightarrow\infty}\frac{1}{N}\sum_{k=1}^{N}\frac{1}{2\sqrt{3}}\int_{0}^{t}L_{k}^{n}\left(\tau_{n}\left(s\right)\right)\mathrm{tr}\left(\widetilde{O}_{\sigma,\varphi,h_{i}}\left(X_{k}\left(\tau_{n}\left(s\right)\right)\right)\right)dR^{n,i}\left(s\right)\\
 & =\frac{1}{2\sqrt{3}}\int_{0}^{t}\rho_{\tau_{n}\left(s\right)}^{n}\left(\mathrm{tr}\left(\widetilde{O}_{\sigma,\varphi,h_{i}}\right)\right)dR^{n,i}\left(s\right),\\
 & \lim_{N\rightarrow\infty}\frac{1}{N}\sum_{k=1}^{N}\frac{1}{2}\int_{0}^{t}L_{k}^{n}\left(\tau_{n}\left(s\right)\right)\mathrm{tr}\left(\widetilde{O}_{\sigma,\varphi,h_{i}}\left(X_{k}\left(\tau_{n}\left(s\right)\right)\right)\right)dY^{i}\left(s\right)\\
 & =\frac{1}{2}\int_{0}^{t}\rho_{\tau_{n}\left(s\right)}^{n}\left(\mathrm{tr}\left(\widetilde{O}_{\sigma,\varphi,h_{i}}\right)\right)dY^{i}\left(s\right)
\end{align*}
With similar reasonings as in Lemma \ref{lem: aux1}, we can identify
the processes in $\left(\ref{eq:Fnik2}\right)$ after the averaging
and limiting procedure. That is
\begin{align*}
\Psi_{t}^{n,i,11}\left(\varphi\right) & \triangleq\lim_{N\rightarrow\infty}\frac{1}{N}\sum_{k=1}^{N}\Psi_{t}^{k,n,i,11}\left(\varphi\right)\\
 & =n\int_{0}^{t}\mathbb{E}^{Q}\left[\int_{\tau_{n}\left(s\right)}^{s}\int_{\tau_{n}\left(s\right)}^{u}L^{n}\left(r\right)\nabla\left(\nabla\varphi^{T}\sigma\right)^{T}\sigma\left(X\left(r\right)\right)dB\left(r\right)dB\left(u\right)\right.\\
 & \qquad\times\left.\left.\nabla h_{i}^{T}\sigma\left(X\left(\tau_{n}\left(s\right)\right)\right)\left(B\left(s\right)-B\left(\tau_{n}\left(s\right)\right)\right)\right|\mathcal{F}_{s}^{Y}\right]dY^{i}\left(s\right)\\
\Psi_{t}^{n,i,12}\left(\varphi\right) & \triangleq\lim_{N\rightarrow\infty}\frac{1}{N}\sum_{k=1}^{N}\Psi_{t}^{k,n,i,12}\left(\varphi\right)\\
 & =n\int_{0}^{t}\mathbb{E}^{Q}\left[\int_{\tau_{n}\left(s\right)}^{s}\int_{\tau_{n}\left(s\right)}^{u}L^{n}\left(r\right)A\left(\nabla\varphi^{T}\sigma\right)^{T}\sigma\left(X\left(r\right)\right)dB\left(r\right)dB\left(u\right)\right.\\
 & \qquad\times\left.\left.\nabla h_{i}^{T}\sigma\left(X\left(\tau_{n}\left(s\right)\right)\right)\left(B\left(s\right)-B\left(\tau_{n}\left(s\right)\right)\right)\right|\mathcal{F}_{s}^{Y}\right]dY^{i}\left(s\right)\\
\Psi_{t}^{n,i,13}\left(\varphi\right) & \triangleq\lim_{N\rightarrow\infty}\frac{1}{N}\sum_{k=1}^{N}\Psi_{t}^{k,n,i,13}\left(\varphi\right)\\
 & =n\int_{0}^{t}\mathbb{E}^{Q}\left[\int_{\tau_{n}\left(s\right)}^{s}\int_{\tau_{n}\left(s\right)}^{u}L^{n}\left(r\right)\nabla\varphi^{T}\sigma\left(X\left(r\right)\right)h^{T}\left(X\left(\tau_{n}\left(s\right)\right)\right)dY\left(r\right)dB\left(u\right)\right.\\
 & \qquad\times\left.\left.\nabla h_{i}^{T}\sigma\left(X\left(\tau_{n}\left(s\right)\right)\right)\left(B\left(s\right)-B\left(\tau_{n}\left(s\right)\right)\right)\right|\mathcal{F}_{s}^{Y}\right]dY^{i}\left(s\right),
\end{align*}
 which are at least of order $\mathcal{O}\left(n^{-1/2}\right)$.
Finally, $\Psi_{t}^{n,i}\left(\varphi\right)=\sum_{j=1}^{13}\Psi_{t}^{n,i,j}\left(\varphi\right)$.

\end{proof}


\global\long\def\theequation{A.\arabic{equation}}%

\appendix

\section{Appendix}

\subsection{Limits for particle representations}

\label{limpr} As we will see, particle representations are useful
in deriving approximations and computing limits. In the following
lemma from \citep{KK10}, $N_{n}$ may be finite or infinite. If $N_{n}=\infty$,
then by $\frac{1}{N_{n}}\sum_{k=1}^{N_{n}}z_{k}$, we mean $\lim_{m\rightarrow\infty}\frac{1}{k}\sum_{k=1}^{m}z_{k}$.

\begin{lemma}\label{exlimproc} Let $Z^{n}=(Z_{1}^{n},\ldots,Z_{N_{n}}^{n})$
be exchangeable families of $D_{E}[0,\infty)$-valued random variables
such that $N_{n}\Rightarrow\infty$ and $Z^{n}\Rightarrow Z$ in $D_{E}[0,\infty)$$^{\infty}$.
Define

\quad{}$\Xi^{n}=\frac{1}{N_{n}}\sum_{k=1}^{N_{n}}\delta_{Z_{k}^{n}}\in{\cal P}(D_{E}[0,\infty))$

\quad{}$\Xi=\lim_{m\rightarrow\infty}\frac{1}{m}\sum_{k=}^{m}\delta_{Z_{k}}$

\quad{}$V^{n}(t)=\frac{1}{N_{n}}\sum_{k=1}^{N_{n}}\delta_{Z_{k}^{n}(t)}\in{\cal P}(E)$

\quad{}$V(t)=\lim_{m\rightarrow\infty}\frac{1}{m}\sum_{k=1}^{m}\delta_{Z_{k}(t)}$

Then
\begin{itemize}
\item[a)] For $t_{1},\ldots,t_{l}\notin\{t:\mathbb{E}\left[\Xi\left\{ x:x(t)\neq x(t-)\right\} \right]>0\}$
\[
\left(\Xi_{n},V^{n}(t_{1}),\ldots,V^{n}(t_{l})\right)\Rightarrow\left(\Xi,V(t_{1}),\ldots,V(t_{l})\right).
\]
\item[b)] If $Z^{n}\Rightarrow Z$ in $D_{E^{\infty}}[0,\infty)$, then $V^{n}\Rightarrow V$
in $D_{{\cal P}(E)}[0,\infty)$. If $Z^{n}\rightarrow Z$ in probability
in $D_{E^{\infty}}[0,\infty)$, then $V^{n}\rightarrow V$ in $D_{{\cal P}(E)}[0,\infty)$
in probability. 
\end{itemize}
\end{lemma}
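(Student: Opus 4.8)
The plan is to route everything through the path-space directing (de Finetti) measure $\Xi^{n}=\frac{1}{N_{n}}\sum_{k=1}^{N_{n}}\delta_{Z_{k}^{n}}\in\mathcal{P}(S)$, where $S:=D_{E}[0,\infty)$, because the time-$t$ empirical measures are its push-forwards under evaluation. Writing $\pi_{t}\colon S\to E$ for $\pi_{t}(x)=x(t)$, one has $V^{n}(t)=\Xi^{n}\circ\pi_{t}^{-1}$ and $V(t)=\Xi\circ\pi_{t}^{-1}$. The central preliminary step is to prove $\Xi^{n}\Rightarrow\Xi$ in $\mathcal{P}(S)$. I would establish this by the method of moments for random measures: for bounded continuous $f_{1},\dots,f_{r}$ on $S$, exchangeability and de Finetti's theorem give $\mathbb{E}\big[\prod_{i=1}^{r}\langle\Xi^{n},f_{i}\rangle\big]=\mathbb{E}\big[f_{1}(Z_{1}^{n})\cdots f_{r}(Z_{r}^{n})\big]+o(1)$, where the $o(1)$ comes from the vanishing fraction of coinciding sampling indices as $N_{n}\to\infty$ (and is exactly $0$ in the directing-measure computation when $N_{n}=\infty$). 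By the hypothesis $Z^{n}\Rightarrow Z$ in $S^{\infty}$, the finite-dimensional law of $(Z_{1}^{n},\dots,Z_{r}^{n})$ converges, so this tends to $\mathbb{E}[f_{1}(Z_{1})\cdots f_{r}(Z_{r})]=\mathbb{E}\big[\prod_{i}\langle\Xi,f_{i}\rangle\big]$. Since the integrals $\langle\Xi^{n},f_{i}\rangle$ are bounded by $\|f_{i}\|_{\infty}$, convergence of all mixed moments yields $(\langle\Xi^{n},f_{i}\rangle)_{i}\Rightarrow(\langle\Xi,f_{i}\rangle)_{i}$; tightness of $\{\Xi^{n}\}$ follows from tightness of its mean measures $\mathbb{E}[\Xi^{n}]=\mathrm{Law}(Z_{1}^{n})$, which converge because $Z_{1}^{n}\Rightarrow Z_{1}$ in $S$. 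Together these give $\Xi^{n}\Rightarrow\Xi$.

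\textbf{Part (a).} Here I would invoke the continuous mapping theorem for the map $\Phi\colon\mu\mapsto(\mu,\mu\circ\pi_{t_{1}}^{-1},\dots,\mu\circ\pi_{t_{l}}^{-1})$ from $\mathcal{P}(S)$ into $\mathcal{P}(S)\times\mathcal{P}(E)^{l}$. The evaluation $\pi_{t}$ is continuous on $S$ off the set $\{x:x(t)\neq x(t-)\}$, so the push-forward $\mu\mapsto\mu\circ\pi_{t}^{-1}$ is weakly continuous at every $\mu$ with $\mu\{x:x(t)\neq x(t-)\}=0$. The defining condition $\mathbb{E}\big[\Xi\{x:x(t_{i})\neq x(t_{i}-)\}\big]=0$ on the admissible times forces $\Xi\{x:x(t_{i})\neq x(t_{i}-)\}=0$ almost surely, so $\Phi$ is continuous at $\Xi$ a.s. Applying the continuous mapping theorem to $\Xi^{n}\Rightarrow\Xi$ then gives $(\Xi^{n},V^{n}(t_{1}),\dots,V^{n}(t_{l}))\Rightarrow(\Xi,V(t_{1}),\dots,V(t_{l}))$, which is exactly the assertion.

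\textbf{Part (b).} I would prove the in-probability statement first and deduce the distributional one by the Skorokhod representation theorem (legitimate since $D_{E^{\infty}}[0,\infty)$ is Polish and $\Xi^{n},V^{n}$ are measurable functions of $Z^{n}$). For the in-probability case, introduce the truncated empirical processes $V^{n,m}(t)=\frac{1}{m}\sum_{k=1}^{m}\delta_{Z_{k}^{n}(t)}$. The finite-$m$ map $W_{m}\colon D_{E^{m}}[0,\infty)\to D_{\mathcal{P}(E)}[0,\infty)$, $(z_{1},\dots,z_{m})\mapsto\big(t\mapsto\frac{1}{m}\sum_{k}\delta_{z_{k}(t)}\big)$, is continuous: Skorokhod convergence in the product space $D_{E^{m}}$ supplies a single common time change for all $m$ coordinates, and applying it to the empirical measure and testing against Lipschitz functions shows $W_{m}$ respects that same time change. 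Since the coordinate projection $D_{E^{\infty}}\to D_{E^{m}}$ is likewise continuous, $Z^{n}\to Z$ gives $V^{n,m}\to V^{m}:=W_{m}(Z_{1},\dots,Z_{m})$ in $D_{\mathcal{P}(E)}$ for each fixed $m$. A diagonal argument then finishes: with $\rho$ a bounded metric for $D_{\mathcal{P}(E)}[0,\infty)$, I write $\rho(V^{n},V)\le\rho(V^{n},V^{n,m})+\rho(V^{n,m},V^{m})+\rho(V^{m},V)$ and let $n\to\infty$ and then $m\to\infty$, using $\rho(V^{m},V)\to0$ (de Finetti LLN for the fixed sequence $Z$) and the uniform bound described next.

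\textbf{Main obstacle.} The one genuinely technical point is the uniform-in-$n$ de Finetti law of large numbers $\lim_{m\to\infty}\sup_{n}\mathbb{E}\big[\rho(V^{n},V^{n,m})\wedge1\big]=0$, which controls the truncation error simultaneously for all $n$. Fixing a countable convergence-determining family $\{f_{j}\}$ on $E$ and conditioning on the de Finetti $\sigma$-field, the variables $f_{j}(Z_{k}^{n}(\cdot))$ are, for each $n$, conditionally i.i.d. bounded càdlàg processes, so $\langle V^{n,m}(t)-V^{n}(t),f_{j}\rangle$ is an average of $m$ conditionally i.i.d. mean-zero terms with $L^{2}$-norm $O(m^{-1/2})$ uniformly in $n$ and $t$. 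Upgrading this pointwise-in-$t$ bound to the $\sup_{t\le T}$ norm that $\rho$ requires is the crux: it needs a uniform modulus-of-continuity estimate for these averaged processes, for which I would lean on the tightness of $\{Z^{n}\}$ in $D_{E^{\infty}}$ and, again, the common-time-change structure that Skorokhod convergence in the product space provides. Everything else is continuous-mapping and push-forward bookkeeping; this uniform functional LLN is where the real work lies.
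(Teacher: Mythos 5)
The first thing to note is that the paper contains no proof of Lemma \ref{exlimproc} to compare against: the lemma is imported verbatim from \citep{KK10}, so your argument has to be judged on its own terms. On those terms, your preliminary step and part (a) are essentially sound. The moment-measure argument for $\Xi^{n}\Rightarrow\Xi$ (mixed moments $\mathbb{E}\bigl[\prod_{i}\langle\Xi^{n},f_{i}\rangle\bigr]$ converge by exchangeability plus convergence of finite-dimensional laws, tightness from the mean measures $\mathbb{E}[\Xi^{n}]=\mathrm{Law}(Z_{1}^{n})$, and the fact that moment measures determine the law of a random probability measure) is a standard and correct route, needing only routine care with the random index count $N_{n}$ (condition on $\{N_{n}\geq r\}$, whose probability tends to one). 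Part (a) then follows correctly from the continuous mapping theorem, since $\mathbb{E}\bigl[\Xi\{x:x(t_{i})\neq x(t_{i}-)\}\bigr]=0$ does force $\Xi\{x:x(t_{i})\neq x(t_{i}-)\}=0$ a.s., and the push-forward $\mu\mapsto\mu\circ\pi_{t}^{-1}$ is weakly continuous at such $\mu$; you should also record the identification $V(t_{i})=\Xi\circ\pi_{t_{i}}^{-1}$ a.s., which is a fixed-$t$ application of de Finetti and is unproblematic.

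Part (b), however, has a genuine gap, and you have correctly located it yourself: the uniform-in-$n$ truncation estimate $\lim_{m\to\infty}\sup_{n}\mathbb{E}\bigl[\rho(V^{n},V^{n,m})\wedge1\bigr]=0$ is not a technical remainder but the entire analytic content of part (b) once the soft continuous-mapping steps are done, and you do not prove it. Moreover, the repair you sketch would fail as stated: you propose to upgrade the pointwise-in-$t$ bound $\|\langle V^{n,m}(t)-V^{n}(t),f_{j}\rangle\|_{L^{2}}=O(m^{-1/2})$ to a supremum over $t\leq T$ via ``a uniform modulus-of-continuity estimate,'' but the averaged processes $t\mapsto\langle V^{n,m}(t)-V^{n}(t),f_{j}\rangle$ are c\`adl\`ag, not continuous, and for general exchangeable families in $D_{E}[0,\infty)$ a positive fraction of the coordinates may jump simultaneously (a common jump time charged by $\Xi$), so no modulus of continuity exists and a Kolmogorov-type chaining argument is unavailable. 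Any honest proof must instead work with the Skorokhod modulus $w'$, or with a grid-plus-oscillation decomposition in the spirit of the paper's Lemma \ref{exchconv} (whose dyadic telescoping handles exactly the sup-over-$t$ issue, but only for continuous paths and a fixed exchangeable sequence), combined with a maximal inequality — e.g., the reverse-martingale structure of $m\mapsto\frac{1}{m}\sum_{k\leq m}f_{j}(Z_{k}^{n}(t))$ at gridpoints — made uniform in $n$ via tightness in $D_{E^{\infty}}[0,\infty)$. A related soft spot: the ingredient $\rho(V^{m},V)\to0$ for the fixed limiting sequence, which you cite as the ``de Finetti LLN,'' is itself the constant-in-$n$ instance of the statement being proved; de Finetti gives convergence at each fixed $t$, not in $D_{\mathcal{P}(E)}[0,\infty)$, so this step also requires the same uniform functional argument rather than a citation. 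Your skeleton for (b) — finite-$m$ continuity of $W_{m}$ via the common time change in $D_{E^{m}}[0,\infty)$, the three-term triangle inequality, and Skorokhod representation to pass from the in-probability to the distributional statement — is the right frame, but as submitted the proof of (b) is incomplete at its decisive step.
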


\begin{remark}\label{cEinf} If the $Z_{k}^{n}$ are in $C_{E}[0,\infty)$,
then every $D$ can be replaced by $C$. In particular, in this case,
$D_{E}[0,\infty)^{\infty}=C_{E^{\infty}}[0,\infty)$. \end{remark}

\begin{remark}\label{intcnv} If $z^{n}$ converges in $D_{{\cal P}(E)}[0,\infty)$
to $z\in D_{{\cal P}(E)}[0,\infty)$, then for any bounded continuous
$f:E\rightarrow\mathbb{R}$, $x^{n}$ given by $x^{n}(t)=\int_{E}f(u)z^{n}(t,du)$
converges to $x$ given by $x(t)=\int_{E}f(u)z(t,du)$ in $D_{\mathbb{R}}[0,\infty)$.
We need similar results for unbounded $f$. \end{remark}

\begin{lemma}\label{wkext} Let $E$ be locally compact and $C_{0}(E)$
be the space of continuous functions vanishing at infinity. Let $\left\{ \mu_{n}\right\} \subset{\cal P}(E)$
and $\mu_{n}\Rightarrow\mu$. Suppose $\varphi\in C(E)$, $\varphi>0$,
and $\sup_{n}\int_{E}\varphi d\mu_{n}<\infty$. Then if $f\in C(E)$
and $\frac{f}{\varphi}\in C_{0}(E)$, $\lim_{n\rightarrow\infty}\int_{E}fd\mu_{n}=\int_{E}fd\mu$.
\end{lemma}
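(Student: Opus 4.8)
The plan is to reduce the claim to ordinary weak convergence on bounded continuous functions by splitting $f$ into a compactly supported piece, to which $\mu_n\Rightarrow\mu$ applies directly, and a tail piece that is dominated by $\varphi$ and therefore uniformly small. First I would observe that all integrals in the statement are finite. Writing $g=f/\varphi$, the hypothesis $g\in C_0(E)$ gives $\|g\|_\infty<\infty$, so $|f|\le\|g\|_\infty\varphi$ and hence $\int_E|f|\,d\mu_n\le\|g\|_\infty\sup_m\int_E\varphi\,d\mu_m=:\|g\|_\infty M<\infty$ uniformly in $n$. The same pointwise bound, together with $\int_E\varphi\,d\mu\le M$, shows $\int_E|f|\,d\mu<\infty$ as well; the inequality $\int_E\varphi\,d\mu\le M$ follows by applying $\mu_n\Rightarrow\mu$ to the bounded continuous truncations $\varphi\wedge R$ (so that $\int_E(\varphi\wedge R)\,d\mu=\lim_n\int_E(\varphi\wedge R)\,d\mu_n\le M$) and then letting $R\to\infty$ by monotone convergence.

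Next, fix $\varepsilon>0$. Since $g$ vanishes at infinity, there is a compact $K\subset E$ with $|g|<\varepsilon$ off $K$. Using local compactness of $E$ I would invoke Urysohn's lemma to produce $\chi\in C_c(E)$ with $0\le\chi\le1$ and $\chi\equiv1$ on $K$, and decompose $f=f\chi+f(1-\chi)$. The product $f\chi$ is continuous with compact support, hence bounded, so $\int_E f\chi\,d\mu_n\to\int_E f\chi\,d\mu$ directly from $\mu_n\Rightarrow\mu$. For the remainder, note that $1-\chi$ vanishes on $K$, so wherever $1-\chi\ne0$ one has $x\notin K$ and therefore $|f(x)(1-\chi(x))|\le|f(x)|<\varepsilon\varphi(x)$; this pointwise bound $|f(1-\chi)|\le\varepsilon\varphi$ then yields both $\bigl|\int_E f(1-\chi)\,d\mu_n\bigr|\le\varepsilon M$ and $\bigl|\int_E f(1-\chi)\,d\mu\bigr|\le\varepsilon M$.

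Combining the two pieces with the triangle inequality gives
\[
\limsup_{n\to\infty}\Bigl|\int_E f\,d\mu_n-\int_E f\,d\mu\Bigr|\le\lim_{n\to\infty}\Bigl|\int_E f\chi\,d\mu_n-\int_E f\chi\,d\mu\Bigr|+2\varepsilon M=2\varepsilon M,
\]
and letting $\varepsilon\downarrow0$ finishes the proof. The argument is essentially routine; the only point requiring genuine care is the uniform tail control, namely ensuring that the mass $\mu_n$ places far out is negligible once weighted by $f$. This is precisely what the twin hypotheses $f/\varphi\in C_0(E)$ and $\sup_n\int_E\varphi\,d\mu_n<\infty$ are designed to supply, and the main thing to get right is that the cutoff $\chi$ may be taken continuous and compactly supported — which is where local compactness of $E$ is used — so that $f\chi$ lies in the class $C_b(E)$ on which weak convergence is available.
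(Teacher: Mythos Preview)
Your argument is correct. The paper, however, proceeds differently: it tilts the measures by setting $d\nu_n=\varphi\,d\mu_n$, views the $\nu_n$ as finite measures on the one-point compactification $E^{\infty}$, and uses that $\sup_n\nu_n(E^{\infty})<\infty$ gives relative compactness in $\mathcal{M}_f(E^{\infty})$ with any limit point of the form $\varphi\mu+p\,\delta_{\infty}$. Since $g=f/\varphi\in C_0(E)$ extends continuously to $E^{\infty}$ with $g(\infty)=0$, one gets $\int_E f\,d\mu_n=\int_{E^{\infty}} g\,d\nu_n\to\int_{E^{\infty}} g\,d(\varphi\mu+p\,\delta_{\infty})=\int_E f\,d\mu$. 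Your route is a direct truncation/tail-control argument: split $f$ via a Urysohn cutoff $\chi$, handle $f\chi\in C_c(E)$ by weak convergence, and bound the remainder uniformly by $\varepsilon\varphi$. The paper's approach is slicker and explains structurally why the pair of hypotheses matters (the $C_0$ condition is exactly what kills the possible mass escaping to $\infty$ in the tilted sequence), while yours is more elementary---it avoids compactification and vague convergence entirely, relying only on Urysohn and the uniform bound $\sup_n\int\varphi\,d\mu_n<\infty$, and would transplant more readily to spaces where one-point compactification is awkward.
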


\begin{proof} Let $E^{\infty}$ denote the one point compactification
and $d\nu_{n}=\varphi d\mu_{n}$. Then since $\sup_{n}\nu_{n}(E^{\infty})<\infty$,
$\{\nu_{n}\}$ is relatively compact in ${\cal M}_{f}(E^{\infty})$,
the finite measures on $E^{\infty}$, and every limit point is of
the form $\varphi\mu+p\delta_{\infty}$. Consequently, if $\frac{f}{\varphi}\in C_{0}(E)$,
\[
\lim_{n\rightarrow\infty}\int_{E}fd\mu_{n}=\lim_{n\rightarrow\infty}\int_{E_{\infty}}\frac{f}{\varphi}d\nu_{n}=\int_{E}fd\mu.
\]
\end{proof}\ 

\begin{lemma}\label{imgcnv} Suppose $z^{n}$ converges in $D_{{\cal P}(E)}[0,\infty)$
to $z\in D_{{\cal P}(E)}[0,\infty)$ and for each $T=1,2,\cdots$,
there exists $\varphi_{T}\in C(E)$, $\varphi_{T}>0$, such that $\sup_{n}\sup_{t\leq T}\int_{E}\varphi_{T}(u)z^{n}(t,du)<\infty$.
If $f\in C(E)$ and for each $T$, $\frac{f}{\varphi_{T}}\in C_{0}(E)$,
then $y_{n}(t)=\int_{E}f(u)z^{n}(t,du)$ converges to $y$ given by
$y(t)=\int_{E}f(u)z(t,du)$ in $D_{\mathbb{R}}[0,\infty)$. \end{lemma}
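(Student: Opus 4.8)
The plan is to reduce the (possibly unbounded) integrand $f$ to a bounded one by truncation, to apply the bounded–integrand convergence already recorded in Remark \ref{intcnv}, and to absorb the discarded tail uniformly in both $n$ and $t$ using the moment bound, exactly in the spirit of the pointwise Lemma \ref{wkext}. Concretely, fix $T$ and write $M_T=\sup_n\sup_{t\le T}\int_E\varphi_T(u)\,z^n(t,du)<\infty$. Since $f/\varphi_T\in C_0(E)$, for each $\epsilon>0$ the set $K=\{|f/\varphi_T|\ge\epsilon\}$ is compact, and $E$ being locally compact I may choose $\chi_\epsilon\in C_c(E)$ with $0\le\chi_\epsilon\le1$ and $\chi_\epsilon\equiv1$ on $K$. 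Put $g_\epsilon=f\chi_\epsilon$, a bounded continuous function, and note $|f-g_\epsilon|=|f|(1-\chi_\epsilon)\le\epsilon\varphi_T$ on all of $E$. Setting $y^\epsilon_n(t)=\int_E g_\epsilon(u)\,z^n(t,du)$ and $y^\epsilon(t)=\int_E g_\epsilon(u)\,z(t,du)$, Remark \ref{intcnv} gives, for each fixed $\epsilon$, that $y^\epsilon_n\to y^\epsilon$ in $D_{\mathbb{R}}[0,\infty)$.

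I would then assemble three ingredients. First, the tail of the approximants is uniformly small: $\sup_{t\le T}|y_n(t)-y^\epsilon_n(t)|\le\epsilon\sup_{t\le T}\int_E\varphi_T\,z^n(t,du)\le\epsilon M_T$ for every $n$. Second, the same must be shown for the limit, which requires that the moment bound survive in $z$. At a continuity point $t$ of $z$ one has $z^n(t,\cdot)\Rightarrow z(t,\cdot)$, so lower semicontinuity of $\mu\mapsto\int_E\varphi_T\,d\mu$ (valid for nonnegative continuous $\varphi_T$) yields $\int_E\varphi_T\,z(t,du)\le\liminf_n\int_E\varphi_T\,z^n(t,du)\le M_T$; extending to arbitrary $t$ by right continuity of $z$ and a further use of lower semicontinuity gives $\sup_{t\le T}\int_E\varphi_T\,z(t,du)\le M_T$, hence $\sup_{t\le T}|y(t)-y^\epsilon(t)|\le\epsilon M_T$. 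In particular $y$ is finite and, being a locally uniform limit as $\epsilon\to0$ of the càdlàg maps $y^\epsilon$, is itself càdlàg, so $y\in D_{\mathbb{R}}[0,\infty)$.

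Finally I would close with the deterministic approximation lemma for the Skorokhod topology: if $y^\epsilon_n\to y^\epsilon$ in $D_{\mathbb{R}}[0,\infty)$ for each fixed $\epsilon$, while $\lim_{\epsilon\to0}\limsup_n\sup_{t\le T}|y_n-y^\epsilon_n|=0$ and $\lim_{\epsilon\to0}\sup_{t\le T}|y-y^\epsilon|=0$ for every $T$, then $y_n\to y$ in $D_{\mathbb{R}}[0,\infty)$. The first input is Remark \ref{intcnv} and the other two were just verified with both $\limsup$ and supremum bounded by $\epsilon M_T$. The mechanism is that, taking the identity time change in the metric $d$ of $D_{\mathbb{R}}[0,\infty)$, one has $d(y_n,y^\epsilon_n)\le(\epsilon M_T\wedge1)+e^{-T}$ for every $T$, so choosing $T$ large and then $\epsilon$ small makes $\sup_n d(y_n,y^\epsilon_n)$ arbitrarily small, and likewise for $d(y,y^\epsilon)$; the triangle inequality with $d(y^\epsilon_n,y^\epsilon)\to0$ then finishes the argument.

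I expect the main obstacle to be not the approximation machinery itself but the uniform control of the tail: one must exhibit a single cut–off $\chi_\epsilon$ whose complementary error is dominated by $\epsilon\varphi_T$ simultaneously for all $t\le T$ and all $n$, and one must transfer the uniform moment bound to the limiting path $z$ through lower semicontinuity at continuity points together with right continuity. Once these uniform estimates are in place, the reduction to the bounded case via Remark \ref{intcnv} and the Skorokhod–metric bookkeeping are routine.
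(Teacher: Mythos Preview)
Your argument is correct, but the paper proceeds quite differently and more economically. Rather than truncating $f$ and invoking Remark~\ref{intcnv} for bounded integrands, the paper applies Lemma~\ref{wkext} directly along time sequences: if $t_n\to t$ and $z^n(t_n)\to z(t)$ in ${\cal P}(E)$ (which Skorokhod convergence of $z^n$ supplies for suitable $t_n$), then the uniform moment bound $\sup_n\int\varphi_T\,dz^n(t_n)\le M_T$ places one exactly in the hypotheses of Lemma~\ref{wkext}, yielding $y_n(t_n)\to y(t)$; similarly $z^n(t_n)\to z(t-)$ forces $y_n(t_n)\to y(t-)$. The passage from this pointwise-along-sequences statement to convergence in $D_{\mathbb{R}}[0,\infty)$ is then outsourced to a characterization of Skorokhod convergence (Lemma~3.6.5 of Ethier--Kurtz). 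Your route avoids that external characterization and is self-contained once Remark~\ref{intcnv} is available, at the cost of the $\epsilon$--$T$ bookkeeping; note in particular that your cut-off $\chi_\epsilon$ depends on $T$ through $\varphi_T$, so the final metric estimate should be read as: for each fixed $T$ build the truncation, bound $\limsup_n d(y_n,y)$ by $2\epsilon M_T+2e^{-T}$ via the triangle inequality, then send $\epsilon\to0$ and finally $T\to\infty$. The paper's route collapses to three lines because Lemma~\ref{wkext} already packages the truncation argument at the level of a single measure, and the Ethier--Kurtz lemma handles the lifting to path space.
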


\begin{proof} Suppose $t_{n}\rightarrow t$. By Lemma \ref{wkext},
if $z^{n}(t_{n})\rightarrow z(t)$, then $y_{n}(t_{n})\rightarrow y(t)$
and if $z^{n}(t_{n})\rightarrow z(t-)$, $y_{n}(t_{n})\rightarrow y(t-)$.
Since $\{z^{n}\}$ converges in $D_{{\cal P}(E)}[0,\infty)$, by Lemma
3.6.5 of \citep{EK86}, $y_{n}\rightarrow y$ in $D_{E}[0,\infty)$.
\end{proof}

\begin{lemma}\label{exchconv} Let $\{\xi_{k}\}\subset C_{\mathbb{R}}[0,T]$
be exchangeable with 
\begin{equation}
\mathbb{E}\left[\sup_{0\leq t\leq T}\left|\xi_{k}(t)\right|\right]<\infty,\label{supnrm}
\end{equation}
and let ${\cal T}_{t}=\cap_{N}\sigma\left(\xi_{N}(s),\xi_{N+1}(s),\ldots:s\leq t\right)$.
Then setting $\hat{\xi}(t)=\mathbb{E}\left[\xi_{1}(t)|{\cal T}_{t}\right]$
\[
\lim_{N\rightarrow\infty}\sup_{0\leq t\leq T}\left|\frac{1}{N}\sum_{k=1}^{N}\xi_{k}(t)-\mathbb{E}\left[\xi_{1}(t)|{\cal T}_{t}\right]\right|.\quad a.s.
\]
\end{lemma}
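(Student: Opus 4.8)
The plan is to prove the result in two stages: first establish convergence at each fixed time $t$ together with the identification of the limit as $\hat\xi(t)$, and then upgrade the pointwise convergence to uniform convergence on $[0,T]$ by an equicontinuity argument. Throughout I write $A_N(t)=\frac1N\sum_{k=1}^N\xi_k(t)$, and for $x\in C_{\mathbb R}[0,T]$ I let $w(x,\delta)=\sup_{|s-t|\le\delta}|x(s)-x(t)|$ denote its modulus of continuity.

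For the pointwise statement, fix $t$ and view $A_N(t)$ as a reverse martingale. Let $\mathcal{G}_N$ be the $\sigma$-algebra of events invariant under permutations of the first $N$ indices of $\{\xi_k\}$, so $\mathcal{G}_N\downarrow\mathcal{G}_\infty$, the exchangeable (equivalently tail) $\sigma$-algebra of the full sequence. Exchangeability gives $\mathbb{E}[\xi_j(t)\mid\mathcal{G}_N]=\mathbb{E}[\xi_1(t)\mid\mathcal{G}_N]$ for $j\le N$, and averaging yields $A_N(t)=\mathbb{E}[\xi_1(t)\mid\mathcal{G}_N]$; since $\mathbb{E}|\xi_1(t)|\le\mathbb{E}[\sup_s|\xi_1(s)|]<\infty$ by \eqref{supnrm}, the reverse martingale convergence theorem gives $A_N(t)\to\mathbb{E}[\xi_1(t)\mid\mathcal{G}_\infty]$ a.s. Running the same computation with the permutation $\sigma$-algebras of the scalar sequence $\{\xi_k(t)\}_k$ represents the same limit as $\mathbb{E}[\xi_1(t)\mid\mathcal{T}^{(t)}]$, where $\mathcal{T}^{(t)}=\cap_N\sigma(\xi_N(t),\xi_{N+1}(t),\dots)$ is the tail $\sigma$-algebra of $\{\xi_k(t)\}_k$. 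Uniqueness of the a.s. limit forces $\mathbb{E}[\xi_1(t)\mid\mathcal{G}_\infty]=\mathbb{E}[\xi_1(t)\mid\mathcal{T}^{(t)}]$ a.s. Because $\mathcal{T}^{(t)}\subseteq\mathcal{T}_t\subseteq\mathcal{G}_\infty$, the tower property then sandwiches the intermediate conditioning,
\[
\mathbb{E}[\xi_1(t)\mid\mathcal{T}_t]=\mathbb{E}\big[\mathbb{E}[\xi_1(t)\mid\mathcal{G}_\infty]\mid\mathcal{T}_t\big]=\mathbb{E}\big[\mathbb{E}[\xi_1(t)\mid\mathcal{T}^{(t)}]\mid\mathcal{T}_t\big]=\mathbb{E}[\xi_1(t)\mid\mathcal{T}^{(t)}],
\]
so the pointwise limit is exactly $\hat\xi(t)$.

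To pass from pointwise to uniform convergence, I would control the moduli of continuity of the averages. Since $w(\cdot,\delta)$ is subadditive, $w(A_N,\delta)\le\frac1N\sum_{k=1}^N w(\xi_k,\delta)=:W_N(\delta)$. The sequence $\{w(\xi_k,\delta)\}_k$ is exchangeable and dominated by $2\sup_s|\xi_k(s)|\in L^1$, so the scalar law of large numbers above gives $W_N(\delta)\to W_\infty(\delta):=\mathbb{E}[w(\xi_1,\delta)\mid\mathcal{G}_\infty]$ a.s. As $\delta\downarrow0$ we have $w(\xi_1,\delta)\downarrow0$ a.s. by continuity of $\xi_1$, and the integrable envelope lets conditional monotone convergence give $W_\infty(\delta)\downarrow0$ a.s. Fixing $\varepsilon>0$ and a countable dense set $D\subset[0,T]$, on the almost sure event where $A_N(t)\to\hat\xi(t)$ for every $t\in D$ and where $W_N(\delta)\to W_\infty(\delta)$ along a sequence $\delta\downarrow0$, a standard Arzel\`a--Ascoli argument (choose $\delta$ with $W_\infty(\delta)<\varepsilon$, a finite $\delta$-net in $D$, and $N$ large) bounds $\sup_{0\le t\le T}|A_N(t)-\hat\xi(t)|$ by a fixed multiple of $\varepsilon$ for all large $N$. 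Hence $\{A_N\}$ is a.s. Cauchy in the sup norm and converges uniformly to a continuous limit that agrees with $\hat\xi$ on $D$ and, by the fixed-$t$ convergence, coincides with $\hat\xi$ everywhere; in particular $\hat\xi$ has a continuous version and the convergence is uniform to it.

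The main obstacle is the uniform upgrade: the delicate point is not the pointwise law of large numbers (a routine reverse-martingale computation) but establishing asymptotic equicontinuity of the averages without any common modulus bound on the individual paths. This is precisely where exchangeability is exploited a second time, applied to the moduli $w(\xi_k,\delta)$, together with the conditional monotone passage $W_\infty(\delta)\downarrow0$. A cleaner but less self-contained alternative is to regard $\{\xi_k\}$ as exchangeable random elements of the separable Banach space $C_{\mathbb R}[0,T]$ and invoke the vector-valued reverse martingale convergence theorem directly; this yields sup-norm a.s. convergence of $A_N=\mathbb{E}[\xi_1\mid\mathcal{G}_N]$ to $\mathbb{E}[\xi_1\mid\mathcal{G}_\infty]$ in a single step, after which the same limit identification applies.
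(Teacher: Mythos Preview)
Your proof is correct and is organized rather differently from the paper's. The paper uses a dyadic telescoping: it writes $\xi_k(t)=\sum_{j\ge 0}U_k^j(t)$ with $U_k^0(t)=\xi_k([t])$ and $U_k^j(t)=\xi_k\!\left([2^{j}t]/2^{j}\right)-\xi_k\!\left([2^{j-1}t]/2^{j-1}\right)$, so each $U_k^j(\cdot)$ is piecewise constant and determined by the values of $\xi_k$ at finitely many dyadic points. De~Finetti is then invoked for the countable family of real exchangeable sequences $\{\xi_k(i/2^j)\}_k$, and the interchange of the sum over $j$ with the $N\to\infty$ limit is justified by the integrable envelope \eqref{supnrm}; the limit is identified through $\mathcal T_t=\bigvee_{i/2^j\le t}\bigcap_N\sigma\big(\xi_N(i/2^j),\xi_{N+1}(i/2^j),\ldots\big)$. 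Your route instead separates the problem into a pointwise step (reverse martingale plus the sandwich $\mathcal T^{(t)}\subset\mathcal T_t\subset\mathcal G_\infty$) and a uniform upgrade driven by the explicit equicontinuity bound $w(A_N,\delta)\le\frac1N\sum_k w(\xi_k,\delta)$.

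The underlying content is the same---both arguments rely on de~Finetti/reverse-martingale convergence at countably many times together with the integrability of $\sup_t|\xi_1(t)|$ to control oscillations---but the packaging differs. The dyadic scheme is short and keeps everything at the level of scalar sequences; your equicontinuity treatment is more explicit about where the uniform estimate comes from, yields the continuity of $\hat\xi$ as a by-product, and (particularly in your vector-valued reverse-martingale variant in $C_{\mathbb R}[0,T]$) transfers without change to other separable Banach-valued settings.
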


\begin{proof} Let $U_{k}^{0}(t)=\xi_{k}([t])$ and 
\[
U_{k}^{j}(t)=\xi_{k}\left(\frac{[2^{j}t]}{2^{j}}\right)-\xi_{k}\left(\frac{[2^{j-1}t]}{2^{j-1}}\right).
\]
Then 
\[
\xi_{k}(t)=\sum_{j=0}^{\infty}U_{k}^{j}(t)
\]
and 
\[
\lim_{N\rightarrow\infty}\frac{1}{N}\sum_{k=1}^{N}\xi_{k}(t)=\sum_{j=0}^{\infty}\lim_{N\rightarrow\infty}\frac{1}{N}\sum_{k=1}^{N}U_{k}^{j}(t)\quad\mbox{{\rm for all }}t\quad a.s.
\]
Note that the convergence on the right involves countably many applications
of de Finetti's theorem for real exchangeable sequences $\left\{ \left\{ \xi_{k}\left(\frac{i}{2^{j}}\right)\right\} :i,j=0,1,\ldots\right\} $.
The interchange of the limit and the sum follows from (\ref{supnrm}).
Since we are assuming all $\sigma$-algebras are completed, 
\[
{\cal T}_{t}=\lor_{\frac{i}{2^{j}}\leq t}\cap_{N}\sigma\left(\xi_{N}\left(\frac{i}{2^{j}}\right),\xi_{N+1}\left(\frac{i}{2^{j}}\right),\ldots\right).
\]
\end{proof}

\subsection{Martingales and change of measure.}

\label{sectmcm}\ This section follows \citet*{Prot04}, Section
III.8. Let $\{{\cal F}_{t}\}$ be a filtration and assume that $P\vert_{{\cal F}_{t}}<<Q\vert_{{\cal F}_{t}}$,
for all $t\geq0$, and that $L(t)$ is the corresponding Radon-Nikodym
derivative. Then $L$ is an $\{{\cal F}_{t}\}$-martingale on $(\Omega,{\cal F},Q)$.

\begin{lemma}\label{cmmart} $Z$ is a $P$-local martingale if and
only if $LZ$ is a $Q$-local martingale. \end{lemma}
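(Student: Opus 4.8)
The plan is to reduce the equivalence to a fixed-time statement and then localize, the localization being where the two directions genuinely differ. First I would record the elementary change-of-measure identity implied by $dP|_{\mathcal{F}_t}=L(t)\,dQ|_{\mathcal{F}_t}$: for $s\le t$, any $A\in\mathcal{F}_s$, and any $\mathcal{F}_t$-measurable $Y$, one has $\mathbb{E}^P[Y\mathbf{1}_A]=\mathbb{E}^Q[L(t)Y\mathbf{1}_A]$, so that $Y\in L^1(P)\iff L(t)Y\in L^1(Q)$, together with the abstract Bayes formula $\mathbb{E}^P[Y\mid\mathcal{F}_s]=L(s)^{-1}\mathbb{E}^Q[L(t)Y\mid\mathcal{F}_s]$ ($P$-a.s.). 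From the first identity a \emph{fixed-time equivalence} is immediate: an adapted $W$ with $W(t)\in L^1(P)$ for all $t$ is a $P$-martingale iff $LW$ is a $Q$-martingale, because the defining identity $\mathbb{E}^P[W(t)\mathbf{1}_A]=\mathbb{E}^P[W(s)\mathbf{1}_A]$ is, term by term, the same equation as $\mathbb{E}^Q[L(t)W(t)\mathbf{1}_A]=\mathbb{E}^Q[L(s)W(s)\mathbf{1}_A]$.

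Second, I would upgrade this to stopped processes. For a stopping time $\tau$, the difference $LZ^\tau-(LZ)^\tau$ equals $(L-L^\tau)Z_\tau$, i.e. the $Q$-martingale $L-L^\tau$ (null on $[0,\tau]$) multiplied by the $\mathcal{F}_\tau$-measurable random variable $Z_\tau$; this is again a $Q$-martingale (a stochastic integral of a predictable integrand against a martingale). Hence $LZ^\tau$ is a $Q$-martingale iff $(LZ)^\tau$ is, and applying the fixed-time equivalence to $W=Z^\tau$ gives the \emph{stopped equivalence}: provided $\tau$ is chosen so that the relevant variables are integrable (which I will ensure by intersecting $\tau$ with times keeping $L$ bounded), $Z^\tau$ is a $P$-martingale iff $(LZ)^\tau$ is a $Q$-martingale.

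With the stopped equivalence in hand, the two implications reduce to transferring localizing sequences between $P$ and $Q$. The reverse implication is the easy one: given a $Q$-localizing sequence $\sigma_n\uparrow\infty$ for $LZ$, set $\tau_n=\sigma_n\wedge\inf\{t:L(t)\le 1/n\}$. Since a nonnegative $Q$-martingale is absorbed at $0$ and $P(L(t)=0)=\mathbb{E}^Q[L(t)\mathbf{1}_{\{L(t)=0\}}]=0$, one has $L(t)>0$ for all $t$ $P$-a.s., so $\inf\{t:L(t)\le 1/n\}\uparrow\infty$ $P$-a.s.; and because $\{\sup_n\sigma_n\le N\}\in\mathcal{F}_N$ with $P|_{\mathcal{F}_N}\ll Q|_{\mathcal{F}_N}$, the $Q$-a.s. explosion of $\sigma_n$ forces $P$-a.s. explosion. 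Thus $\tau_n\uparrow\infty$ $P$-a.s., and the stopped equivalence shows each $Z^{\tau_n}$ is a $P$-martingale.

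The forward implication is the main obstacle, precisely because $P\ll Q$ does \emph{not} come with $Q\ll P$: a $P$-localizing sequence $\tau_n\uparrow\infty$ only $P$-a.s., and nothing forces $\tau_n\uparrow\infty$ $Q$-a.s. I would resolve this by exploiting that $LZ$ is absorbed at $0$: with $\gamma=\inf\{t:L(t)=0\}$ we have $L\equiv 0$, hence $LZ\equiv 0$, on $[\gamma,\infty)$ $Q$-a.s., so $LZ=(LZ)^\gamma$; and the $Q$-event on which $\tau_n$ fails to explode is $Q$-a.s. contained in $\{\gamma<\infty\}$, since for each $t$ the $P$-null set $\{\sup_n\tau_n\le t\}\in\mathcal{F}_t$ satisfies $\mathbb{E}^Q[L(t)\mathbf{1}_{\{\sup_n\tau_n\le t\}}]=0$, forcing $L(t)=0$ there. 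Using the processes $(LZ)^{\tau_n\wedge\gamma_n}$ (with $\gamma_n=\inf\{t:L(t)\le 1/n\}\uparrow\gamma$), which are $Q$-martingales by the stopped equivalence, together with the absorption of $LZ$ on $[\gamma,\infty)$, one assembles a genuine $Q$-localizing sequence. Carrying this last assembly out rigorously --- extending the localizing times past $\gamma$, where $LZ$ is already constant ($=0$), and treating the continuous-approach case $L(\gamma-)=0$ --- is the delicate technical point, and I expect it to be the crux of the proof.
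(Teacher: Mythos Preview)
Your core argument---the Bayes-formula equivalence $\mathbb{E}^{P}[Z(t+h)-Z(t)\mid\mathcal{F}_t]=0 \iff \mathbb{E}^{Q}[L(t+h)Z(t+h)-L(t)Z(t)\mid\mathcal{F}_t]=0$---is exactly what the paper uses, and is the whole of the paper's proof: it records the integrability equivalence $Z(\tau)\in L^1(P)\iff L(\tau)Z(\tau)\in L^1(Q)$ for bounded stopping times $\tau$, writes down the Bayes identity, and stops. The paper does not discuss localization at all; it cites Protter, Section III.8 at the head of the subsection and treats the passage from martingale to local martingale as routine.

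You have therefore gone well beyond the paper, and in a direction that is genuinely more careful. Your observation that under mere $P\ll Q$ (no equivalence) a $P$-localizing sequence need not explode $Q$-a.s., and your remedy via the absorption of $L$ (hence of $LZ$) at $\gamma=\inf\{t:L(t)=0\}$, are correct and standard in the literature that treats this point rigorously. The paper simply does not engage with this; its proof is the martingale case plus an implicit ``localize.'' So your proposal is correct, matches the paper's approach at the level of the key identity, and is more complete on the localization issue---the ``delicate technical point'' you flag is real, but it is one the paper leaves to the reader (or to Protter).
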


\begin{proof} Note that for a bounded stopping time $\tau$, $Z(\tau)$
is $P$-integrable if and only if $L(\tau)Z(\tau)$ is $Q$-integrable.
By Bayes formula, $\mathbb{E}^{P}[Z(t+h)-Z(t)|{\cal F}_{t}]=0$ if
and only if $\mathbb{E}^{Q}[L(t+h)(Z(t+h)-Z(t))|{\cal F}_{t}]=0$
which is equivalent to 
\[
\mathbb{E}^{Q}\left[L(t+h)Z(t+h)|{\cal F}_{t}\right]=\mathbb{E}^{Q}\left[L(t+h)Z(t)|{\cal F}_{t}\right]=L(t)Z(t).
\]
\end{proof}

\begin{theorem}\label{cmmart2} If $M$ is a $Q$-local martingale,
then 
\begin{equation}
Z(t)=M(t)-\int_{0}^{t}\frac{1}{L(s)}d[L,M]_{s}\label{zdef}
\end{equation}
is a $P$-local martingale. (Note that the integrand is $\frac{1}{L(s)}$,
not $\frac{1}{L(s-)}$.) \end{theorem}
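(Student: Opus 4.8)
The plan is to reduce everything to Lemma \ref{cmmart}, which asserts that $Z$ is a $P$-local martingale if and only if $LZ$ is a $Q$-local martingale. Thus it suffices to show that $LZ$ is a $Q$-local martingale, and for this I can work entirely under $Q$, where $L$ and $M$ are both local martingales. Write $A_t=\int_0^t L(s)^{-1}\,d[L,M]_s$, so that $Z=M-A$ with $A$ a process of finite variation, and recall that (on the $P$-support) $L$ stays strictly positive so that $A$, and hence $Z$, is well defined.

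First I would apply the integration-by-parts formula for semimartingales,
\[
L_tZ_t=L_0Z_0+\int_0^t L_{s-}\,dZ_s+\int_0^t Z_{s-}\,dL_s+[L,Z]_t .
\]
The term $\int_0^t Z_{s-}\,dL_s$ is a $Q$-local martingale since $L$ is a $Q$-local martingale and $Z_-$ is locally bounded (after the usual localization). Substituting $dZ=dM-dA$ splits $\int_0^t L_{s-}\,dZ_s$ into the $Q$-local martingale $\int_0^t L_{s-}\,dM_s$ and the finite-variation correction $-\int_0^t \frac{L_{s-}}{L(s)}\,d[L,M]_s$, while $[L,Z]=[L,M]-[L,A]$. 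The crux is then to verify that the three remaining finite-variation terms cancel, i.e. that
\[
[L,M]_t-\int_0^t \frac{L_{s-}}{L(s)}\,d[L,M]_s-[L,A]_t=0 .
\]
Splitting $[L,M]$ into its continuous and purely discontinuous parts, the continuous contributions of the first two terms cancel because $L_{s-}=L(s)$ off the jumps; for the jumps I would use $\Delta[L,M]_s=\Delta L_s\,\Delta M_s$ together with $\Delta A_s=L(s)^{-1}\Delta L_s\,\Delta M_s$ to obtain
\[
[L,M]_t-\int_0^t \frac{L_{s-}}{L(s)}\,d[L,M]_s=\sum_{s\le t}\frac{(\Delta L_s)^2\,\Delta M_s}{L(s)}
=[L,A]_t ,
\]
so the two sums are identical and cancel. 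This leaves $L_tZ_t-L_0Z_0=\int_0^t L_{s-}\,dM_s+\int_0^t Z_{s-}\,dL_s$, a $Q$-local martingale, and Lemma \ref{cmmart} then gives the claim.

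I expect the main obstacle to be the careful bookkeeping of the jump terms rather than any conceptual difficulty; in particular, the single point that makes the argument work is that the normalisation $L(s)$ (and not $L(s-)$) is exactly what produces the term-by-term cancellation of $\sum_{s\le t}(\Delta L_s)^2\Delta M_s/L(s)$ against $[L,A]_t$, which is precisely the content of the parenthetical remark in the statement; with $L(s-)$ a residual jump term $-[L,A]_t$ would survive. The remaining points are routine: the strict positivity of $L$ needed for $A$ to be defined, and a localization by stopping times such as $T_n=\inf\{t:L_t\vee|Z_t|\ge n\}$ to legitimise treating $\int L_-\,dM$ and $\int Z_-\,dL$ as genuine $Q$-local martingales.
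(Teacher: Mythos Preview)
Your proof is correct and follows the same route as the paper: reduce to Lemma \ref{cmmart} and show by integration by parts that $LZ$ is a $Q$-local martingale. The paper is simply more terse, recording the identity $L(t)Z(t)=L(t)M(t)-[L,M]_t-\int_0^t V(s-)\,dL(s)$ (with $V=A$ in your notation) in one line, whereas you expand the same computation and make the jump cancellation explicit.
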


\begin{proof} Note that $LM-[L,M]$ is a $Q$-local martingale. We
need to show that $LZ$ is a $Q$-local martingale. But letting $V$
denote the second term on the right of (\ref{zdef}), we have 
\[
L(t)Z(t)=L(t)M(t)-[L,M]_{t}-\int_{0}^{t}V(s-)dL(s),
\]
and both terms on the right are $Q$-local martingales.\hfill{}\end{proof}

\section*{Acknowledgement}
Part of this research was funded within the project STORM: Stochastics for Time-Space Risk Models, from the Research Council of Norway (RCN). Project number: 274410.

\bibliographystyle{plainnat}
\bibliography{filtrep}

\end{document}